\documentclass[a4paper,reqno,12pt]{amsart} 
   %




\usepackage{amsmath} 



\usepackage{amssymb}      


\usepackage{yhmath}
\usepackage{mathdots}
\usepackage{MnSymbol}





\usepackage{amsthm}      

\usepackage{tikz,amsmath}
\usetikzlibrary{arrows}

\usepackage{epsfig}      



\usepackage{graphicx}
\usepackage[normalem]{ulem}
\usepackage[all,line,
]{xy} 
\CompileMatrices 



      




\newcommand{\Ad}{\operatorname{Ad}}

\newcommand{\id}{\operatorname{id}} 
\newcommand{\Aut}{\operatorname{Aut}}

\newcommand{\diag}{\operatorname{diag}}

\newcommand{\Span}{\operatorname{Span}}

\newcommand{\Sp}{\operatorname{Sp}}







   \theoremstyle{plain}
   \newtheorem{thm}{Theorem}[section]
   \newtheorem{prop}[thm]{Proposition}
   \newtheorem{lemma}[thm]{Lemma}  
   \newtheorem{cor}[thm]{Corollary}
   \theoremstyle{definition}

   \newtheorem{example}[thm]{Example}
   \theoremstyle{remark}
   
   \newtheorem{remark}[thm]{Remark}



\usepackage{mdframed,xcolor}

\definecolor{mybgcolor}{gray}{0.8}
\definecolor{myframecolor}{rgb}{.647,.129,.149}

\mdfdefinestyle{mystyle}{
  usetwoside=false,
  skipabove=0.6em plus 0.8em minus 0.2em,
  skipbelow=0.6em plus 0.8em minus 0.2em,
  innerleftmargin=.25em,
  innerrightmargin=0.25em,
  innertopmargin=0.25em,
  innerbottommargin=0.25em,
  leftmargin=-.75em,
  rightmargin=-0em,
  topline=false,
  rightline=false,
  bottomline=false,
  leftline=false,
  backgroundcolor=mybgcolor,
  splittopskip=0.75em,
  splitbottomskip=0.25em,
  innerleftmargin=0.5em,
  leftline=true,
  linecolor=myframecolor,
  linewidth=0.25em,
}

\newmdenv[style=mystyle]{important}

\usepackage{lipsum}




   \numberwithin{equation}{section}








        \date{\today}

\title[KMS states]{KMS states on crossed products by abelian groups}

\author{Johannes Christensen and Klaus Thomsen}



\date{\today}

\email{ johannes@math.au.dk, matkt@math.au.dk}
\address{Department of Mathematics, Aarhus University, Ny Munkegade, 8000 Aarhus C, Denmark}

\begin{document}

\maketitle

\section{Introduction}

In recent years there has been a quite steep increase in the number of papers on KMS states for one-parameter flows on $C^*$-algebras. Most have been concerned with specific examples, and as a result we have now a large stock of flows for which the structure of the KMS states is well elucidated. These papers often build on a rather small collection of papers where the general structure is investigated for a larger class of flows. One of these papers is the one by S. Neshveyev, \cite{N}, in which he investigates the KMS states for a natural and very general class of flows on the $C^*$-algebra of an \'etale groupoid. He obtains a description of the KMS states in terms of measures and measurable fields of traces on the $C^*$-algebras of the isotropy groups in the groupoid. The generality of the setting means that his approach covers a very large class of examples and the results provide a natural setup which can be employed widely. Nonetheless it is often not an easy task to obtain from the general description a satisfying understanding of the KMS states in specific cases. In particular, we have often found it difficult to understand the fields of tracial states which enter in general. This fact, and a very recent paper by D. Ursu, \cite{U}, triggered the present work. The problem which Ursu considers is that of determining the set of traces on a crossed product $A \rtimes G$ by a discrete group $G$ which extends a given $G$-invariant trace on a $C^{*}$-algebra $A$. This problem is analogues to that of determining, in the setting of Neshveyev, the measurable fields of traces associated to a given quasi-invariant measure on the unit space of the groupoid. As we will show in the present paper this is much more than an analogy; in fact, the ideas of Ursu work well to give an algebraic alternative to the measurable fields of traces in Neshveyev's setting. See Section \ref{Sec4}. Moreover, Ursu's approach can also be used in a setting which simultaneously generalizes both the groupoid setting of Neshveyev and the crossed product setting of Ursu to give a general description of the KMS states for a very large class of flows in terms of traces on a subalgebra of the fixed point algebra of the flow and a family of completely positive maps, called pseudoexpectations in \cite{U}. See Section \ref{section1}. Ursu's ideas work even better for crossed products by abelian groups where they can be combined with the spectral analysis developed for abelian groups of automorphisms by Arveson, Borchers and Connes. This leads to what we think is a very satisfying general description of the KMS states for a large class of flows on a crossed product by an abelian group. The analogue of the measurable fields from Neshveyev's setting is here played by a subgroup of the dual group, the Connes spectrum of an action arising naturally in the setting. See Section \ref{secabel}. We conclude the paper by giving an example which illustrates some of the results and relate to our work on KMS states for flows on the crossed product of a homeomorphism, \cite{CT}. The example demonstrates also that the main virtue of our results is that they set up a general framework which can be used in cases that have not been considered before, and that they leave plenty of hard work to be done in order to get the full picture of the structure of KMS states for specific flows. Just like the results of Neshveyev that motivated the work.

\smallskip

 \emph{Acknowledgement} The work was supported by the DFF-Research Project 2 `Automorphisms and Invariants of Operator Algebras', no. 7014-00145B.

\section{The setting} \label{section1}
Let $B$ be a $C^*$-algebra.
A flow on $B$ is a point-wise norm-continuous representation $ \alpha =\left\{\alpha_t\right\}_{t \in \mathbb R}$ of the real line $\mathbb R$ by automorphisms of ${B}$. Given such a flow we say that $b\in {B}$ is \emph{$\alpha$-analytic} when the map $t \to \alpha_t(b)$ can be extended to a holomorphic map $\mathbb C \ni z \mapsto \alpha_z(b)$. Let $\beta \in \mathbb R$. A state $\omega$ on $B$ is a \emph{$\beta$-KMS state} for $\alpha$ when
\begin{equation}\label{06-06-20a}
\omega(ba) = \omega(a\alpha_{i\beta}(b))
\end{equation}
for all $a \in B$ and all elements $b$ of a dense $\alpha$-invariant $*$-subalgebra of $\alpha$-analytic elements, and it holds then for all $\alpha$-analytic elements $b$, cf. \cite{BR}. Let $A \subseteq B$ be a $C^*$-subalgebra of $B$. We say that an element $b \in {B}$ is an \emph{$(A,\alpha)$-normalizer} when $b$ is $\alpha$-analytic and has the property that
\begin{equation}\label{06-06-20}
\alpha_{t}(b) {A} b^{*} \subseteq {A} \ \text{  and  } \  b^{*} {A} \alpha_{t}(b) \subseteq {A} \ \text{ for all } t \in \mathbb{R}.
\end{equation}
We denote the set of $(A,\alpha)$-normalisers by $N_\alpha({A})$. We say that ${A}$ is \emph{$\alpha$-regular} when 
\begin{enumerate}
\item ${A}$ contains an approximate unit for ${B}$, 
\item ${A}$ is contained in the fixed-point algebra of $\alpha$, i.e. $\alpha_{t}(a)=a$ for all $a\in {A}$ and all $t\in \mathbb{R}$, and
\item ${B}$ is generated as a $C^{*}$-algebra by $N_\alpha({A})$.
\end{enumerate}
Since $N_\alpha({A})$ is closed under composition and taking adjoints, the last condition holds if and only if
\begin{equation*}
{B}=\overline{\text{span}} \ N_\alpha({A})    \ .
\end{equation*}


\section{KMS states on $B$ and trace states on $A$}\label{Sec2}
Throughout this section $\alpha$ is a flow on $B$ and $A \subseteq B$ is an $\alpha$-regular subalgebra.

\begin{lemma}\label{06-06-20g} Let $a \in A, \ b \in N_\alpha(A)$. Then $b^*a\alpha_{i\beta}(b) \in A$.
\end{lemma}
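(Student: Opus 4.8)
The plan is to observe that the lemma is immediate from analyticity together with a standard fact about Banach-space-valued holomorphic functions: one that takes values in a closed subspace along the real line takes values there everywhere. Concretely, fix $a\in A$ and $b\in N_\alpha(A)$ and consider the map $f\colon\mathbb{C}\to B$ defined by $f(z)=b^{*}a\,\alpha_{z}(b)$. Since $b$ is $\alpha$-analytic, $z\mapsto\alpha_{z}(b)$ is an entire $B$-valued function by definition, and composing it with the bounded linear map $x\mapsto b^{*}ax$ shows that $f$ is entire as well; in particular $f$ is defined at $i\beta$ and $f(i\beta)=b^{*}a\,\alpha_{i\beta}(b)$. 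For real $t$ the normalizer relation \eqref{06-06-20} gives $f(t)=b^{*}a\,\alpha_{t}(b)\in b^{*}A\,\alpha_{t}(b)\subseteq A$, so $f$ maps $\mathbb{R}$ into the norm-closed subspace $A$ of $B$.

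The second step is to upgrade $f(\mathbb{R})\subseteq A$ to $f(\mathbb{C})\subseteq A$. Given any $\varphi\in B^{*}$ with $\varphi|_{A}=0$, the scalar function $\varphi\circ f$ is entire and vanishes on $\mathbb{R}$, hence vanishes identically by the identity theorem for holomorphic functions. Since this holds for every such $\varphi$, the Hahn--Banach theorem forces $f(z)\in A$ for all $z\in\mathbb{C}$. Evaluating at $z=i\beta$ gives $b^{*}a\,\alpha_{i\beta}(b)=f(i\beta)\in A$, which is the assertion.

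I do not expect a genuine obstacle here. The only points needing a little care are that $z\mapsto\alpha_{z}(b)$ is literally entire — which is part of the definition of an $\alpha$-analytic element, so the expression $\alpha_{i\beta}(b)$ and the function $f$ make sense on all of $\mathbb{C}$ — and that $A$ is norm-closed in $B$, which holds because $A$ is a $C^{*}$-subalgebra. It is worth noting that neither the approximate-unit condition nor the requirement that $A$ lie in the fixed-point algebra of $\alpha$ enters this particular argument; only the analyticity of $b$ and the inclusion $b^{*}A\,\alpha_{t}(b)\subseteq A$ for real $t$ are used.
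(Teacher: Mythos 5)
Your proof is correct and follows essentially the same route as the paper: both arguments apply a functional annihilating $A$ to the entire function $z\mapsto b^{*}a\,\alpha_{z}(b)$, note that the resulting scalar function vanishes on $\mathbb{R}$ and hence everywhere, and conclude membership in $A$ via Hahn--Banach. The only cosmetic difference is that the paper phrases this as a proof by contradiction while you argue directly.
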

\begin{proof} Assume for a contradiction that $b^{*}a\alpha_{i\beta}(b) \notin {A}$. By the Hahn-Banach theorem there is a continuous linear functional $\phi$ on ${B}$ with $\phi({A})=\{0\}$ and $\phi(b^{*}a\alpha_{i\beta}(b))= 1$. But $z\to \phi(b^{*}a\alpha_{z}(b))$ is holomorphic and $0$ on $\mathbb{R}$, which implies that $\phi(b^{*}a\alpha_{i\beta}(b))=0$.
\end{proof}

We let $T({A})$ denote the set of trace states on ${A}$. Let $\beta \in \mathbb R$. An element $\tau \in T({A})$ is \emph{$(\alpha,\beta)$-conformal} when
\begin{equation} \label{eg1}
\tau(bab^{*}c)=\tau(ab^{*}c\alpha_{i\beta}(b)) \text{ for all } a, c\in {A} \text{ and } b\in N_\alpha({A}) \ .
\end{equation}
The righthand side of \eqref{eg1} makes sense by Lemma \ref{06-06-20g}.

\begin{lemma} Let $\tau \in T(A)$. Then
$\tau$ is $(\alpha,\beta)$-conformal if and only if 
\begin{equation}\label{06-06-20d}
\tau(bb^{*})=\tau(b^{*}\alpha_{i\beta}(b))
\end{equation} 
for all $b\in N_\alpha({A})$.
\end{lemma}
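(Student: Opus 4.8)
The plan is to prove the two implications separately, the forward one being a short limiting argument and the converse a polarization that exploits the multiplicative structure of $N_\alpha(A)$.

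\emph{Forward direction.} Assume $\tau$ is $(\alpha,\beta)$-conformal and fix $b\in N_\alpha(A)$. Choose an approximate unit $(e_\lambda)$ for $B$ with each $e_\lambda\in A$, which exists since $A$ is $\alpha$-regular. Putting $a=c=e_\lambda$ in \eqref{eg1} gives $\tau(be_\lambda b^*e_\lambda)=\tau(e_\lambda b^*e_\lambda\alpha_{i\beta}(b))$. By the case $t=0$ of \eqref{06-06-20} we have $bAb^*\subseteq A$, so $be_\lambda b^*e_\lambda\in A$ and $be_\lambda b^*e_\lambda\to bb^*$ in norm; by Lemma \ref{06-06-20g}, $b^*e_\lambda\alpha_{i\beta}(b)\in A$, hence $e_\lambda b^*e_\lambda\alpha_{i\beta}(b)\in A$ and converges in norm to $b^*\alpha_{i\beta}(b)$. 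As $\tau$ is norm-continuous on $A$, passing to the limit gives \eqref{06-06-20d}.

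\emph{Converse.} Assume \eqref{06-06-20d} for every element of $N_\alpha(A)$, and fix $b\in N_\alpha(A)$ and $a,c\in A$. The crucial point is that, although $N_\alpha(A)$ is not a linear subspace of $B$, the four elements
\[
d_1:=ba,\qquad d_2:=c^*b,\qquad d_1+d_2,\qquad d_1+id_2
\]
all belong to $N_\alpha(A)$. Indeed, $d_1$ and $d_2$ are $\alpha$-analytic, being products of the analytic element $b$ with fixed (hence analytic) elements of $A$, and from \eqref{06-06-20} together with the fact that $A$ is closed under multiplication one checks directly that each of the mixed inclusions $\alpha_t(d_j)Ad_k^*\subseteq A$ and $d_j^*A\alpha_t(d_k)\subseteq A$ holds for $j,k\in\{1,2\}$ and $t\in\mathbb{R}$; expanding $\alpha_t(d_1+d_2)A(d_1+d_2)^*$ and $(d_1+d_2)^*A\alpha_t(d_1+d_2)$ into sums of four such terms then shows $d_1+d_2\in N_\alpha(A)$, and likewise $d_1+id_2$, since the defining conditions are unaffected by scalar factors. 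One also records, for later use, that $\alpha_{i\beta}(ba)=\alpha_{i\beta}(b)\,a$ and $\alpha_{i\beta}(c^*b)=c^*\alpha_{i\beta}(b)$, by uniqueness of analytic continuation. Now apply \eqref{06-06-20d} to $d_1$, $d_2$, $d_1+d_2$ and $d_1+id_2$, using that $\alpha_{i\beta}$ is complex-linear and that $\tau$ is tracial. Expanding the relations for the two sums and using those for $d_1$ and $d_2$ to cancel the diagonal contributions, one is left with
\[
\tau(d_1d_2^*)+\tau(d_2d_1^*)=\tau(d_1^*\alpha_{i\beta}(d_2))+\tau(d_2^*\alpha_{i\beta}(d_1)),
\]
\[
-\tau(d_1d_2^*)+\tau(d_2d_1^*)=\tau(d_1^*\alpha_{i\beta}(d_2))-\tau(d_2^*\alpha_{i\beta}(d_1)).
\]
Subtracting the second identity from the first gives $\tau(d_1d_2^*)=\tau(d_2^*\alpha_{i\beta}(d_1))$; since $d_1d_2^*=bab^*c$ and $d_2^*\alpha_{i\beta}(d_1)=b^*c\,\alpha_{i\beta}(b)\,a$, the trace property turns this into $\tau(bab^*c)=\tau(ab^*c\,\alpha_{i\beta}(b))$, which is \eqref{eg1}. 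Hence $\tau$ is $(\alpha,\beta)$-conformal.

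I expect the only real obstacle to be the verification that $d_1+d_2$ and $d_1+id_2$ again lie in $N_\alpha(A)$: this is what legitimizes the polarization despite $N_\alpha(A)$ failing to be closed under addition, and it works precisely because the summands $ba$ and $c^*b$ are ``collinear'' through the single normalizer $b$. The remaining steps are routine manipulations with \eqref{06-06-20d} and the tracial identity.
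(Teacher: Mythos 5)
Your proof is correct, but the converse direction takes a genuinely different route from the paper. The paper fixes \emph{positive} $a,c\in A$, observes that the single element $\sqrt{c}\,b\sqrt{a}$ lies in $N_\alpha(A)$, applies \eqref{06-06-20d} to it once (using traciality and $\alpha_{i\beta}(\sqrt{c}\,b\sqrt{a})=\sqrt{c}\,\alpha_{i\beta}(b)\sqrt{a}$), and then gets \eqref{eg1} for general $a,c$ by decomposing them into positive elements and using linearity. You instead polarize: you apply \eqref{06-06-20d} to the four normalizers $ba$, $c^*b$, $ba+c^*b$, $ba+ic^*b$ and cancel the diagonal terms, which yields \eqref{eg1} for arbitrary $a,c$ in one stroke. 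The extra burden in your argument — verifying that the two sums again lie in $N_\alpha(A)$ — is real but checks out, since all mixed products such as $\alpha_t(b)aAb^*c$ and $b^*cA\alpha_t(b)a$ land in $A$ by \eqref{06-06-20}; and your identities $\alpha_{i\beta}(ba)=\alpha_{i\beta}(b)a$, $\alpha_{i\beta}(c^*b)=c^*\alpha_{i\beta}(b)$ are justified by analytic continuation because $a,c$ are $\alpha$-fixed. What each approach buys: the paper's square-root trick is shorter and needs only one application of the hypothesis per pair of positive elements, while your polarization avoids the positivity decomposition entirely and makes explicit the useful fact that $N_\alpha(A)$, though not a linear subspace, is closed under sums of elements ``collinear'' through a fixed normalizer $b$. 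The forward direction (approximate unit in place of $a$ and $c$, then norm continuity of $\tau$) is the same as the paper's, just written out in more detail.
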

\begin{proof} One direction follows immediate from \eqref{eg1} by taking $a$ and $c$ from an approximate unit for $B$. Assume instead that \eqref{06-06-20d} holds. Let $b\in N_\alpha({A})$ and let $a,c\in {A}$ be positive. Then $\sqrt{c}b\sqrt{a}\in N_\alpha({A})$ and hence by assumption
\begin{align*}
\tau(bab^{*}c) &= \tau( \sqrt{c}b\sqrt{a} (\sqrt{c}b\sqrt{a})^{*} )
=\tau(  (\sqrt{c}b\sqrt{a})^{*} \alpha_{i\beta}(\sqrt{c}b\sqrt{a}))\\
&=\tau(  (\sqrt{c}b\sqrt{a})^{*}\sqrt{c} \alpha_{i\beta}(b)\sqrt{a}) = \tau(  a b^{*}  c \alpha_{i\beta}(b)) \ .
\end{align*}
It follows by linearity that \eqref{eg1} holds.
\end{proof}

 We denote the set of $(\alpha,\beta)$-conformal trace states by $T_\beta(A)$ and we denote by $S_{\beta}({B})$ the set of $\beta$-KMS states for $\alpha$ on $B$. Clearly,
\begin{equation}\label{06-06-20e}
\omega \in S_{\beta}({B}) \ \Rightarrow \ \omega|_{{A}}\in T_\beta({A}) \ .
\end{equation}

Given a $C^*$-algebra $D$ and a state $\psi$ on $D$ we denote in the following by $\pi_{\psi}$ the representation of $D$ by bounded operators on the Hilbert space $H_{\psi}$ obtained from the GNS construction and by $\Omega_{\psi}\in H_{\psi}$ a cyclic vector such that $\psi(d) = \left<\Omega_{\psi},\pi_{\psi}(d)\Omega_{\psi}\right>$. We denote by $\overline{\psi}$ the normal state on $\pi_{\psi}(D)''$ given by $\Omega_{\psi}$, i.e. $\overline{\psi}(\cdot) = \left< \Omega_{\psi}, \cdot \  \Omega_{\psi}\right>$.

\begin{lemma}\label{lemma62}
Let $\tau \in T_\beta({A})$ and let $\omega$ be a state on ${B}$ with $\omega|_{A} = \tau$. There is an isomorphism $\iota_{\omega}  : \pi_\tau({A})'' \to \pi_{\omega}({A})''$ of von Neumann algebras such that $\iota_{\omega} \circ \pi_\tau = \pi_\omega|_{A}$ and $\overline{\tau} = \overline{\omega} \circ \iota_\omega$.
\end{lemma}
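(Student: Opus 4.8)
The plan is to realise $\iota_\omega$ concretely out of the GNS construction and then isolate the one genuinely nontrivial point, which is a statement about folia of representations of $A$ that the conformality of $\tau$ is exactly designed to produce. Since $\omega|_A=\tau$, the triple $(\pi_\omega|_A,\ \overline{\pi_\omega(A)\Omega_\omega},\ \Omega_\omega)$ is a GNS triple for $\tau$, so by uniqueness of the GNS construction there is a unitary $W$ from $H_\tau$ onto $K:=\overline{\pi_\omega(A)\Omega_\omega}$ with $W\Omega_\tau=\Omega_\omega$ and $W\pi_\tau(a)=\pi_\omega(a)W$ for $a\in A$. Writing $p\in\pi_\omega(A)'$ for the projection onto $K$, conjugation by $W$ is a normal $*$-isomorphism of $\pi_\tau(A)''$ onto $\pi_\omega(A)''p$, and composing it with the compression $\pi_\omega(A)''\to\pi_\omega(A)''p$ produces $\iota_\omega$ provided that compression is injective. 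Injectivity of the compression is equivalent to $p$ having central support $1$ in $\pi_\omega(A)''$, equivalently to $\pi_\omega|_A$ being quasi-equivalent to $\pi_\tau$, equivalently to the folium of the representation $\pi_\omega|_A$ of $A$ being contained in the folium of $\pi_\tau$. Once this is established, the relations $\iota_\omega\circ\pi_\tau=\pi_\omega|_A$ and $\overline{\tau}=\overline{\omega}\circ\iota_\omega$ are immediate, because $\overline{\omega}\circ\iota_\omega$ and $\overline{\tau}$ are normal states on $\pi_\tau(A)''$ that both restrict to $a\mapsto\tau(a)$ on the $\sigma$-weakly dense subalgebra $\pi_\tau(A)$.

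So everything reduces to showing that every vector functional $a\mapsto\langle\xi,\pi_\omega(a)\xi\rangle$ on $A$, $\xi\in H_\omega$, lies in the folium of $\pi_\tau$. The folium of $\pi_\tau$ is norm closed (the map $\psi\mapsto\psi\circ\pi_\tau$ from the predual of $\pi_\tau(A)''$ into $A^{*}$ is isometric, by Kaplansky density), and $\pi_\omega(\operatorname{span} N_\alpha(A))\Omega_\omega$ is dense in $H_\omega$ because $B=\overline{\operatorname{span}}\,N_\alpha(A)$; hence it suffices to treat $\xi=\pi_\omega(b)\Omega_\omega$ with $b=\sum_{j=1}^m n_j$, $n_j\in N_\alpha(A)$. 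For such $b$ and $a\in A_+$ one has $\langle\pi_\omega(b)\Omega_\omega,\pi_\omega(a)\pi_\omega(b)\Omega_\omega\rangle=\|\sum_j\pi_\omega(a^{1/2}n_j)\Omega_\omega\|^2\le m\sum_j\omega(n_j^{*}an_j)$, so the positive functional $a\mapsto\omega(b^{*}ab)$ is dominated by $m\sum_j\bigl(a\mapsto\omega(n_j^{*}an_j)\bigr)$; since folia are hereditary under domination by positive functionals (Radon–Nikodym for positive functionals), it is enough to show that $a\mapsto\omega(n^{*}an)$ lies in the folium of $\pi_\tau$ for a single $n\in N_\alpha(A)$.

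This last point is where the conformality of $\tau$ is used. For $n\in N_\alpha(A)$ and $a\in A$ one has $n^{*}an\in A$ by \eqref{06-06-20}, and $d:=n\alpha_{i\beta}(n^{*})\in A$ by \reflemma{06-06-20g} applied to $n^{*}\in N_\alpha(A)$ together with an approximate unit; applying the defining identity \eqref{eg1} of $(\alpha,\beta)$-conformality to $n^{*}\in N_\alpha(A)$ with $c$ running through an approximate unit for $B$ yields $\tau(n^{*}an)=\tau\bigl(an\alpha_{i\beta}(n^{*})\bigr)=\tau(ad)$. Hence $\omega(n^{*}an)=\tau(ad)=\langle\Omega_\tau,\pi_\tau(a)\pi_\tau(d)\Omega_\tau\rangle$, and the right-hand side is $\sigma$-weakly continuous in $\pi_\tau(a)$, so $a\mapsto\omega(n^{*}an)$ is the composition of $\pi_\tau$ with a normal functional on $\pi_\tau(A)''$, which is positive since $a\mapsto\omega(n^{*}an)$ is; thus it lies in the folium of $\pi_\tau$, as required.

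The main obstacle is the reduction carried out in the second paragraph, namely passing from vectors of the special form $\pi_\omega(n)\Omega_\omega$ to arbitrary vectors of $H_\omega$. Products $n^{*}an'$ with distinct normalizers $n\neq n'$ need not lie in $A$, so one cannot manipulate them directly against $\tau$; the Cauchy–Schwarz estimate together with the hereditary behaviour of folia (equivalently, the facts that quasi-equivalence is detected on folia and that a subrepresentation is automatically quasi-contained) is what circumvents this and brings the whole statement down to the one-normalizer identity $\omega(n^{*}an)=\tau(ad)$, which is precisely the algebraic content of the conformality hypothesis.
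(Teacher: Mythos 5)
Your proof is correct, and it constructs the same map as the paper — the GNS isometry $W$ with range projection $p=WW^*\in\pi_\omega(A)'$, with $\iota_\omega$ the inverse of the compression $m\mapsto W^*mW$ — but you verify the one nontrivial point, injectivity of that compression, by a genuinely different argument. The paper does it by a direct computation: given $m\in\pi_\omega(A)''$ with $mp=0$, it approximates $m^*m$ strongly by $\pi_\omega(a_n)$, uses the conformality identity $\tau(ba_nb^*)=\tau(a_nb^*\alpha_{i\beta}(b))$ together with $b^*\alpha_{i\beta}(b)\in A$ (Lemma~\ref{06-06-20g}) to get $m\pi_\omega(b^*)\Omega_\omega=0$ for every $b\in N_\alpha(A)$, and concludes $m=0$ from density of $\operatorname{span}N_\alpha(A)$ in $B$. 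You instead recast injectivity as quasi-containment of $\pi_\omega|_A$ in $\pi_\tau$, i.e. containment of folia, reduce by norm-closedness of the folium to vector states of vectors $\pi_\omega(b)\Omega_\omega$ with $b$ a finite sum of normalizers, dispose of the cross terms via the Cauchy--Schwarz bound $\omega(b^*ab)\le k\sum_j\omega(n_j^*an_j)$ and hereditariness of folia under domination, and finish with the single-normalizer identity $\omega(n^*an)=\tau\bigl(a\,n\alpha_{i\beta}(n^*)\bigr)$ — which is the same conformality input \eqref{eg1} as in the paper, applied to $n^*$ with $c$ an approximate unit. What your route buys is a conceptual reformulation (the lemma is precisely the statement that every vector state of $\pi_\omega|_A$ is $\pi_\tau$-normal), at the cost of invoking more standard machinery: central supports and quasi-equivalence, the Radon--Nikodym-type hereditariness of folia, and Kaplansky density both for the isometry of the predual embedding and for positivity of the normal extension of $m\mapsto\overline{\tau}(m\pi_\tau(n\alpha_{i\beta}(n^*)))$. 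The paper's argument is more pedestrian but self-contained, and it never meets the cross-term problem at all, because the unknown element $m$ enters linearly against the normalizers rather than quadratically as in your vector-state formulation.
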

\begin{proof} We define an isometry $W : H_{\tau} \to H_{\omega}$ such that $W\pi_\tau(a)\Omega_{\tau} = \pi_{\omega}(a)\Omega_{\omega}$ for $a\in {A}$. Then $W\pi_\tau(a) = \pi_\omega(a) W$ and $WW^*$ is the projection onto the subspace $\overline{\pi_\omega({A})\Omega_{\omega}}$ of $H_{\omega}$. Since $WW^*$ is in the commutant of $\pi_\omega({A})''$ we can define a normal $*$-homomorphism $j : \pi_{\omega}({A})'' \to \pi_\tau({A})''$ such that $j(m) = W^*mW$. Note that $j \circ \pi_\omega|_{A} = \pi_\tau$ which in particular implies that $j$ is surjective. To see that $j$ is injective, assume that $m \in \pi_{\omega}({A})''$ and $j(m) = 0$. It follows that $mWW^* = 0$ and hence that $m\overline{\pi_\omega({A})\Omega_{\omega}}= \{0\}$. Now let $b\in N_\alpha({A})$ and let $\{a_{n}\}_{n=1}^{\infty} \subseteq {A}$ satisfy that $\pi_{\omega}(a_{n})\to m^{*}m \in \pi_{\omega}({A})''$ in the strong operator topology. Then
\begin{align*}
&\lVert m  \pi_{\omega}(b^{*}) \Omega_{\omega} \rVert^{2} =\overline{\omega}(\pi_{\omega}(b) m^{*} m \pi_{\omega}(b)^{*}) = \lim_{n\to \infty} \overline{\omega}(\pi_{\omega}(b) \pi_{\omega}(a_{n}) \pi_{\omega}(b)^{*}) \\
&= \lim_{n\to \infty} \omega(b  a_{n}b^{*})
= \lim_{n\to \infty} \tau(b  a_{n}b^{*}) = \lim_{n\to \infty} \tau( a_{n}b^{*}\alpha_{i\beta}(b)) \\ &
= \lim_{n\to \infty} \overline{\omega}(  \pi_{\omega}(a_{n})\pi_{\omega}(b^{*}\alpha_{i\beta}(b))) =\overline{\omega}(  m^{*}m\pi_{\omega}(b^{*}\alpha_{i\beta}(b))) \\
&= \langle \Omega_{\omega}, m^{*}m\pi_{\omega}(b^{*}\alpha_{i\beta}(b)) \Omega_{\omega}  \rangle=0 \ ,
\end{align*}
because $m\pi_{\omega}(b^{*}\alpha_{i\beta}(b)) \Omega_{\omega}=0$ since $b^{*}\alpha_{i\beta}(b) \in {A}$ by Lemma \ref{06-06-20g}. The vectors $\pi_{\omega}(b^{*})\Omega_{\omega}$ with $b\in N_\alpha({A})$ span a dense subspace of $H_{\omega}$ and we conclude that $m=0$. It follows that $j$ is also injective and hence an isomorphism. Set $\iota_{\omega} = j^{-1}$. The observation that $\overline{\tau} \circ j(m) = \left< \Omega_\tau, W^*mW\Omega_\tau\right> = \left< W\Omega_\tau, mW\Omega_\tau\right> = \left< \Omega_\omega, m\Omega_\omega\right> = \overline{\omega}(m)$ completes the proof.
\end{proof}

In the following we shall often suppress $\iota_\omega$ in notation and instead consider $\pi_\tau(A)''$ as a subalgebra of $\pi_\omega(B)''$.

\begin{lemma}\label{07-06-20}
Let $\tau \in T_\beta({A})$. For each $b\in N_\alpha({A})$ there is a bounded strong operator continuous linear map $\gamma_{b} : \pi_{\tau}({A})'' \to \pi_{\tau}({A})''$ such that
\begin{equation*}
\gamma_{b}(\pi_{\tau}(a))=\pi_{\tau}(bab^{*}) \ \text{ for all } a\in {A}.
\end{equation*}
The map $N_\alpha({A}) \ni b \to \gamma_{b}$ is a semi-group representation in the sense that $\gamma_{ab}=\gamma_{a}\gamma_{b}$.
\end{lemma}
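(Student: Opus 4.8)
The plan is to obtain $\gamma_b$ by extending the completely positive map $a \mapsto b a b^{*}$ from $A$ to the von Neumann algebra $M := \pi_\tau(A)''$, using only that $\tau$ is a trace (in fact conformality of $\tau$ plays no role here). First I would record the elementary facts. Taking $t=0$ in \eqref{06-06-20} gives $b^{*}Ab\subseteq A$, and since $A$ contains an approximate unit for $B$ a short norm-limit argument gives $b^{*}b\in A$; hence for $a,x\in A$ the positive elements $d:=b^{*}a^{*}ab=(ab)^{*}(ab)$ and $e:=b^{*}b$ lie in $A$, while $bxb^{*}\in A$. The one computation that carries the lemma is the estimate
\[
  \bigl\| \pi_\tau(bxb^{*})\,\pi_\tau(a^{*})\Omega_\tau \bigr\|^{2}
  = \tau\bigl( ab\, x^{*} b^{*} b x\, b^{*} a^{*} \bigr)
  = \tau\bigl( d\, x^{*} e\, x \bigr)
  \le \|d\|\,\|e\|\,\tau(x^{*}x)
  = \|b^{*}a^{*}ab\|\,\|b^{*}b\|\,\bigl\| \pi_\tau(x)\Omega_\tau \bigr\|^{2},
\]
valid for all $a,x\in A$; the middle equality and the inequality use the trace property and positivity of $\tau$ (moving $b^{*}a^{*}$ to the front, then $d\le\|d\|1$, $e\le\|e\|1$).

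Next I would construct $\gamma_b(m)$ for a fixed $m\in M$. By the Kaplansky density theorem choose a net $(a_i)$ in $A$ with $\sup_i\|a_i\|\le\|m\|$ and $\pi_\tau(a_i)\to m$ in the strong operator topology; then $\pi_\tau(a_i)\Omega_\tau\to m\Omega_\tau$, so applying the estimate to $x=a_i-a_j$ shows that $\bigl(\pi_\tau(ba_ib^{*})\,\xi\bigr)_i$ is Cauchy for every $\xi$ in the dense subspace $\pi_\tau(A)\Omega_\tau$. Since $\|\pi_\tau(ba_ib^{*})\|\le\|b\|^{2}\|m\|$ uniformly, the net $\bigl(\pi_\tau(ba_ib^{*})\bigr)_i$ converges strongly, and its limit lies in $M$ because $M$ is strongly closed and each $\pi_\tau(ba_ib^{*})\in\pi_\tau(A)$; call it $\gamma_b(m)$. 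The same estimate shows this limit is independent of the chosen net, so $\gamma_b$ is well defined; it is linear (pass to limits along a common refinement of the approximating nets), satisfies $\|\gamma_b(m)\|\le\|b\|^{2}\|m\|$, and, taking the constant net $a_i=a$, fulfils $\gamma_b(\pi_\tau(a))=\pi_\tau(bab^{*})$.

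For strong continuity I would first upgrade the estimate: approximating an arbitrary $n\in M$ strongly by a bounded net in $\pi_\tau(A)$ and passing to the limit yields $\bigl\|\gamma_b(n)\,\pi_\tau(a^{*})\Omega_\tau\bigr\|^{2}\le\|b^{*}a^{*}ab\|\,\|b^{*}b\|\,\|n\Omega_\tau\|^{2}$ for all $n\in M$ and $a\in A$. If now $m_i\to m$ strongly with $(m_i)$ bounded, then $(m_i-m)\Omega_\tau\to 0$, so $\gamma_b(m_i-m)$ tends to $0$ on the dense subspace $\pi_\tau(A)\Omega_\tau$; since $\|\gamma_b(m_i)-\gamma_b(m)\|=\|\gamma_b(m_i-m)\|\le\|b\|^{2}\|m_i-m\|$ stays bounded, $\gamma_b(m_i)\to\gamma_b(m)$ strongly. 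For the semigroup property, recall $N_\alpha(A)$ is closed under composition, so $ab\in N_\alpha(A)$, and on $\pi_\tau(A)$ we have $\gamma_{ab}(\pi_\tau(x))=\pi_\tau\bigl(a(bxb^{*})a^{*}\bigr)=\gamma_a(\gamma_b(\pi_\tau(x)))$; as $\gamma_{ab}$ and $\gamma_a\circ\gamma_b$ are both bounded and strongly continuous on bounded sets and agree on $\pi_\tau(A)$, Kaplansky density forces $\gamma_{ab}=\gamma_a\circ\gamma_b$ on all of $M$.

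The main obstacle is the sole non-formal point, namely the estimate of the first paragraph: it is precisely the assertion that the algebraic map $\pi_\tau(a)\mapsto\pi_\tau(bab^{*})$ is continuous for the topology in which $M$ is the completion of $\pi_\tau(A)$. The trick is to test against the vectors $\pi_\tau(a^{*})\Omega_\tau$, use cyclicity of $\tau$ to move $b^{*}a^{*}$ to the front, and thereby bound the result by $\|\pi_\tau(x)\Omega_\tau\|$ up to a constant depending only on $a$ and $b$; everything afterwards is routine bookkeeping with Kaplansky density and strong limits.
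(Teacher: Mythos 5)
Your key estimate is where the argument breaks down. The middle equality
\[
\tau\bigl(ab\,x^{*}b^{*}bx\,b^{*}a^{*}\bigr)\;=\;\tau\bigl(d\,x^{*}e\,x\bigr),\qquad d=b^{*}a^{*}ab,\ e=b^{*}b,
\]
is not a legitimate application of the trace property: $\tau$ is a trace on $A$ only, so $\tau(uv)=\tau(vu)$ is available only when both factors lie in $A$, whereas your permutation moves $ab$ (equivalently $b^{*}a^{*}$) around the trace, and these elements are not in $A$. The admissible factorizations of the word $abx^{*}b^{*}bxb^{*}a^{*}$ into $A$-blocks all keep $b$ paired with $b^{*}$, e.g. $a\cdot(bx^{*}b^{*})\cdot(bxb^{*})\cdot a^{*}$, and cyclic rotations of such blocks can never produce $b^{*}(a^{*}a)b\cdot x^{*}(b^{*}b)x$. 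In fact the estimate, and with it the lemma for arbitrary traces, is false: take $B=C(X)\rtimes_\gamma\mathbb Z$ with $D=0$ (so $\alpha$ is trivial and the canonical unitary $b=u$ lies in $N_\alpha(A)$), $\gamma(f)=f\circ\phi^{-1}$ for a homeomorphism $\phi$, and $\tau=\delta_{x_0}$ with $\phi(x_0)\neq x_0$. Then $\pi_\tau$ is the one-dimensional representation $f\mapsto f(x_0)$, the left-hand side of your estimate is $|a(x_0)|^{2}\,|x(\phi^{-1}(x_0))|^{2}$ while the right-hand side is $\|a\|^{2}\,|x(x_0)|^{2}$, and the prescription $\pi_\tau(f)\mapsto\pi_\tau(ufu^{*})$ is not even well defined. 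So your parenthetical claim that conformality of $\tau$ plays no role cannot be correct: the hypothesis $\tau\in T_\beta(A)$ is precisely what excludes such traces, and any proof must use it.

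The legitimate substitute for your cyclic permutation is the conformality identity \eqref{eg1}, which lets $b$ travel around $\tau$ only at the cost of an $\alpha_{i\beta}(b)$, and the paper uses it indirectly: extend $\tau$ to an arbitrary state $\omega$ on $B$, invoke Lemma \ref{lemma62} (whose injectivity argument is exactly where \eqref{eg1} enters) to obtain a normal isomorphism $\iota_\omega:\pi_\tau(A)''\to\pi_\omega(A)''$ with $\iota_\omega\circ\pi_\tau=\pi_\omega|_A$, and define $\gamma_b(m)=\iota_\omega^{-1}\bigl(\pi_\omega(b)\iota_\omega(m)\pi_\omega(b)^{*}\bigr)$; boundedness, strong continuity and the semigroup law are then immediate from this formula. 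If you prefer to keep your direct GNS-estimate strategy, you must test against vectors and apply \eqref{eg1}, which replaces $b^{*}a^{*}ab$ by expressions involving $b^{*}\,\cdot\,\alpha_{i\beta}(b)$ (these lie in $A$ by Lemma \ref{06-06-20g}); carried out, that computation is essentially a reproof of Lemma \ref{lemma62}.
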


\begin{proof}
Since  $\tau$ is a state on ${A}$ there is a state $\omega$ on ${B}$ such that $\omega|_{{A}}=\tau$. Let $\iota_\omega : \pi_\tau(A)'' \to \pi_\omega(A)''$  be the isomorphism from Lemma \ref{lemma62} and set
$$
\gamma_b(m)=  \iota_\omega^{-1}\left( \pi_\omega(b)\iota_\omega(m)\pi_\omega(b^*)\right) \ .
$$
We leave it to the reader to check that $\gamma$ has the properties stated.

\end{proof}



Notice that when $\tau \in T_\beta({A})$ it follows from \eqref{eg1} that
\begin{equation} \label{ekms}
\overline{\tau}(\gamma_{b}(a) \pi_\tau(c))=\overline{\tau}(a \pi_\tau(b^* c \alpha_{i\beta}(b)))
\end{equation} 
for all $a\in \pi_{\tau}({A})'', \ c \in A$ and $b\in N_\alpha({A})$.

\begin{lemma}\label{lemma64} Let $\tau \in T_\beta({A})$ and assume that there is an $\omega \in S_{\beta}({B})$ such that $\omega|_{A} = \tau$. There is a normal faithful conditional expectation $F'_{\omega} : \pi_{\omega}({B})'' \to \pi_\tau({A})''$ such that $\overline{\tau} \circ F'_{\omega} = \overline{\omega}$. 
\end{lemma}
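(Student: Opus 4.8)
The plan is to obtain $F'_\omega$ as the $\overline\omega$-preserving normal conditional expectation from $\pi_\omega(B)''$ onto $\pi_\tau(A)''$ furnished by Tomita--Takesaki theory, after checking that $\pi_\tau(A)''$ sits inside the centraliser of $\overline\omega$. First I would record that, $\omega$ being a $\beta$-KMS state for $\alpha$, the vector $\Omega_\omega$ is cyclic and separating for $M:=\pi_\omega(B)''$, so $\overline\omega$ is a faithful normal state on $M$; moreover the modular automorphism group $\{\sigma_t\}_{t\in\mathbb R}$ of $(M,\overline\omega)$ is the $\sigma$-weakly continuous extension of $\{\alpha_{-\beta t}\}_{t\in\mathbb R}$, i.e. $\sigma_t(\pi_\omega(b))=\pi_\omega(\alpha_{-\beta t}(b))$ for all $b\in B$ and $t\in\mathbb R$. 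This is classical, cf. \cite{BR}. Since $A$ is $\alpha$-regular it lies in the fixed-point algebra of $\alpha$, whence $\sigma_t(\pi_\omega(a))=\pi_\omega(\alpha_{-\beta t}(a))=\pi_\omega(a)$ for every $a\in A$ and $t\in\mathbb R$. As $\{m\in M:\sigma_t(m)=m \text{ for all } t\}$ is a $\sigma$-weakly closed $*$-subalgebra of $M$, it contains $\pi_\omega(A)''$, so $\pi_\omega(A)''$ is fixed pointwise by $\{\sigma_t\}$; in particular it is a globally $\sigma_t$-invariant von Neumann subalgebra of $M$, containing $1_M$ because $A$ contains an approximate unit for $B$.

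Next I would invoke Takesaki's theorem on conditional expectations: a globally $\sigma_t$-invariant von Neumann subalgebra of $(M,\overline\omega)$ is the range of a unique normal, $\overline\omega$-preserving conditional expectation. Identifying $\pi_\tau(A)''$ with $\pi_\omega(A)''$ through the isomorphism $\iota_\omega$ of Lemma \ref{lemma62} (and suppressing $\iota_\omega$ as in the remark following that lemma), this yields a normal conditional expectation $F'_\omega:\pi_\omega(B)''\to\pi_\tau(A)''$ with $\overline\omega\circ F'_\omega=\overline\omega$. Faithfulness is then automatic: if $F'_\omega(m^*m)=0$ then $\overline\omega(m^*m)=\overline\omega(F'_\omega(m^*m))=0$, so $m=0$ since $\overline\omega$ is faithful. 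Finally, because $\overline\tau=\overline\omega\circ\iota_\omega$ by Lemma \ref{lemma62}, i.e. $\overline\tau$ is the restriction of $\overline\omega$ to $\pi_\tau(A)''$ under the identification, we get $\overline\tau\circ F'_\omega=\overline\omega\circ F'_\omega=\overline\omega$, as claimed.

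I expect no genuine obstacle in this lemma; the substantive work has already been done in Lemmas \ref{06-06-20g}--\ref{lemma62}, which is what lets us regard $\pi_\tau(A)''$ as a concrete subalgebra of $\pi_\omega(B)''$ on which $\overline\tau$ is simply $\overline\omega$, while the spectral fact that $A$ is fixed by the flow is what places this subalgebra inside the centraliser of $\overline\omega$. What remains is to quote Tomita--Takesaki theory and the Takesaki conditional-expectation theorem, with the only care needed being to keep the identification $\iota_\omega$ straight. (The degenerate case $\beta=0$ is not exceptional: then $\sigma_t=\id$, $\overline\omega$ is tracial on the finite von Neumann algebra $M$, and $F'_\omega$ is the trace-preserving conditional expectation onto the subalgebra, which always exists; it is subsumed in the statement above.)
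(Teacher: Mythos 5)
Your argument is correct and is exactly the paper's route: the paper's proof is a terse citation of Takesaki's theorem \cite{Ta}, relying implicitly on the facts you spell out — that $\Omega_\omega$ is cyclic and separating, that the modular group of $\overline{\omega}$ is the weak extension of a reparametrized $\alpha$ which fixes $\pi_\omega(A)''\cong\pi_\tau(A)''$ pointwise, and that faithfulness and $\overline{\tau}\circ F'_\omega=\overline{\omega}$ follow from the state-preserving property together with Lemma \ref{lemma62}. No gaps; your write-up just makes the citation explicit.
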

\begin{proof} This follows from \cite{Ta}.
\end{proof}

In the setting of Lemma \ref{lemma64}, set 
\begin{equation}\label{econd}
F_{\omega} = F'_{\omega} \circ \pi_{\omega}: \ {B} \ \to \ \pi_{\tau}({A})'' \ . 
\end{equation}

\begin{lemma}  \label{lem17}Let $\tau \in T_\beta({A})$ and assume that $\omega \in S_{\beta}({B})$ satisfies $\omega|_{A} = \tau$. Let $F_{\omega}$ be the map defined in \eqref{econd}. Then

\begin{itemize}
\item[a)] $F_{\omega}$ is a completely positive contraction,
\item[b)] $F_{\omega}|_{A} = \pi_{\tau}$,
\item[c)] $\overline{\tau} \circ F_{\omega} = \omega $, and
\item[d)] $F_{\omega}(bxb^{*}) = \gamma_{b}\left(F_{\omega}(x)\right)$ for all $x\in {B}$ and all $b\in N_\alpha({A})$.
\end{itemize}
\end{lemma}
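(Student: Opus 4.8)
The plan is to read off parts a)--c) almost directly from the construction of $F_\omega = F'_\omega \circ \pi_\omega$ and the properties of $F'_\omega$ furnished by Lemma \ref{lemma64}, and then to spend the real effort on part d). For a): $\pi_\omega$ is a $*$-homomorphism, hence completely positive and contractive, and $F'_\omega$ is a conditional expectation, hence also completely positive and contractive; the composition inherits both properties. For b): if $a \in A$ then $\pi_\omega(a) \in \pi_\omega(A)'' \subseteq \pi_\omega(B)''$, and under the identification of $\pi_\tau(A)''$ with a subalgebra of $\pi_\omega(B)''$ via $\iota_\omega$ (Lemma \ref{lemma62}), the element $\pi_\omega(a)$ lies in the range of $F'_\omega$, so $F'_\omega(\pi_\omega(a)) = \pi_\omega(a) = \iota_\omega(\pi_\tau(a))$; suppressing $\iota_\omega$ this reads $F_\omega(a) = \pi_\tau(a)$. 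For c): for $x \in B$ we have $\overline{\tau}(F_\omega(x)) = \overline{\tau}(F'_\omega(\pi_\omega(x))) = \overline{\omega}(\pi_\omega(x)) = \omega(x)$, using $\overline{\tau} \circ F'_\omega = \overline{\omega}$ from Lemma \ref{lemma64} and the definition of $\overline{\omega}$.

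The substance is in d), the covariance $F_\omega(bxb^*) = \gamma_b(F_\omega(x))$. The idea is to verify this first on the dense subspace spanned by normalizers and then pass to the limit. Fix $b \in N_\alpha(A)$. I would first check the identity when $x = c \in A$: there $F_\omega(bcb^*) = \pi_\tau(bcb^*)$ by part b), since $bcb^* \in A$ by $\alpha$-regularity (as $b \in N_\alpha(A)$ and $\alpha_t$ fixes $A$, so $b A b^* = \alpha_t(b) A b^* \subseteq A$), and this equals $\gamma_b(\pi_\tau(c)) = \gamma_b(F_\omega(c))$ by the defining property of $\gamma_b$ in Lemma \ref{07-06-20}. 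Next, the natural strategy is to characterize $F_\omega$ by a moment-type identity: since $B = \overline{\operatorname{span}}\, N_\alpha(A)$ and $\pi_\tau(A)\Omega_\tau$ is dense, it suffices to show that for all $d, e \in N_\alpha(A)$,
\begin{equation*}
\overline{\tau}\!\left( \pi_\tau(d)\, F_\omega(bxb^*)\, \pi_\tau(e) \right) = \overline{\tau}\!\left( \pi_\tau(d)\, \gamma_b(F_\omega(x))\, \pi_\tau(e) \right).
\end{equation*}
For the left side one uses c) (in the form $\overline{\tau}(\pi_\tau(d)\, F_\omega(y)\, \pi_\tau(e)) = \overline{\omega}(\pi_\omega(d) \pi_\omega(y) \pi_\omega(e))= \omega(d y e)$, valid because $\pi_\tau(A)'' \subseteq \pi_\omega(B)''$ is in the multiplicative domain of $F'_\omega$), so the left side is $\omega(d b x b^* e)$. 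Then $d b, \, b^* e \in N_\alpha(A)$, and one rewrites using the KMS condition for $\omega$ together with $\alpha_{i\beta}(b^* e) = \alpha_{i\beta}(b^*)\alpha_{i\beta}(e)$; the aim is to move the $b$'s across and recognize the result via \eqref{ekms}, which relates $\gamma_b$ to conjugation by $b$ inside $\overline{\tau}$. The key technical point is that \eqref{ekms} and its consequences let one treat $\gamma_b$ as genuine conjugation by $\pi_\omega(b)$ once everything is paired against $\overline{\tau} = \overline{\omega}|_{\pi_\tau(A)''}$.

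The main obstacle I anticipate is the bookkeeping in this last computation: $\gamma_b$ is only a completely positive map on $\pi_\tau(A)''$, not an algebra automorphism, and $b^* \alpha_{i\beta}(b)$ need not be central, so one must be careful about the order of factors and about which elements legitimately lie in the multiplicative domain of $F'_\omega$. A cleaner route, which I would pursue in parallel, is to avoid moments altogether: use that $F_\omega(x) = \iota_\omega^{-1}(F'_\omega(\pi_\omega(x)))$ and that $F'_\omega$ is the unique normal conditional expectation onto $\pi_\tau(A)''$ preserving $\overline{\omega}$ (this uniqueness is part of the Takesaki theory cited for Lemma \ref{lemma64}); then show directly that $x \mapsto \gamma_b^{-1}\!\big(F_\omega(b x b^*)\big)$ — more precisely, the map built from conjugating by $\pi_\omega(b)$ before and after applying $F'_\omega$ — is again a normal conditional-expectation-like map compatible with $\overline{\omega}$, and conclude by uniqueness. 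I expect the moment computation to be the version that actually appears, with the covariance of $\omega$ under the KMS condition doing the work, but either way the heart of the matter is propagating the single relation \eqref{ekms} from generators to all of $B$.
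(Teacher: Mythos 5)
Your a)--c) coincide with the paper's proof, and for d) you have assembled the right ingredients (multiplicative domain of $F_\omega$ containing $A$, identity \eqref{ekms}, property c), the KMS condition, faithfulness of $\overline{\tau}$), but the argument as written has a genuine gap. First, the reduction step is ill-posed: you test against $\pi_\tau(d)\,\cdot\,\pi_\tau(e)$ with $d,e\in N_\alpha(A)$, but $\pi_\tau$ is the GNS representation of $A$ only, so $\pi_\tau(d)$ is undefined for a normalizer, and the auxiliary identity $\overline{\tau}(\pi_\tau(d)F_\omega(y)\pi_\tau(e))=\omega(dye)$ that you invoke only holds when $d,e\in A$ (that is exactly the multiplicative-domain statement). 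Two-sided moments are also unnecessary: since $\overline{\tau}$ is a faithful normal trace and $\pi_\tau(A)$ is strongly dense in $\pi_\tau(A)''$, it suffices to check $\overline{\tau}(y\,\pi_\tau(a))=0$ for all $a\in A$, where $y=F_\omega(bxb^*)-\gamma_b(F_\omega(x))$. Second, the computation itself is only announced (``the aim is to move the $b$'s across''), and it is precisely the content of the lemma; the obstacle you anticipate in fact never arises, because nothing has to be commuted past $b^*\alpha_{i\beta}(b)$. The chain is short and one-sided:
\begin{align*}
\overline{\tau}\bigl(\gamma_b(F_\omega(x))\pi_\tau(a)\bigr)
&=\overline{\tau}\bigl(F_\omega(x)\,\pi_\tau(b^*a\alpha_{i\beta}(b))\bigr)
=\overline{\tau}\bigl(F_\omega(xb^*a\alpha_{i\beta}(b))\bigr)\\
&=\omega\bigl(xb^*a\alpha_{i\beta}(b)\bigr)
=\omega\bigl(bxb^*a\bigr)
=\overline{\tau}\bigl(F_\omega(bxb^*)\pi_\tau(a)\bigr)\ ,
\end{align*}
using \eqref{ekms}, the multiplicative domain (note $b^*a\alpha_{i\beta}(b)\in A$ by Lemma \ref{06-06-20g}), c), the KMS condition $\omega(by)=\omega(y\alpha_{i\beta}(b))$ with $y=xb^*a$, and c) again; faithfulness of $\overline{\tau}$ then gives d). This is the paper's argument.

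Your fallback route via uniqueness of the Takesaki expectation would not work as stated. The map $\gamma_b$ is only a completely positive map implemented by a non-unitary normalizer, so $\gamma_b^{-1}$ need not exist; and the map $y\mapsto F'_\omega\bigl(\pi_\omega(b)\,y\,\pi_\omega(b)^*\bigr)$ is neither unital nor the identity on $\pi_\tau(A)''$ (it sends $\pi_\tau(a)$ to $\pi_\tau(bab^*)$), so it is not a conditional expectation onto $\pi_\tau(A)''$ preserving $\overline{\omega}$ and the uniqueness statement has nothing to bite on. Discard that route and complete the direct computation above.
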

\begin{proof} a) and b) follow immediately from the constructions. c): Let $x \in B$. Then
$$
\overline{\tau}\left(F_{\omega}(x)\right)= \overline{\tau}\left(F'_{\omega}(\pi_{\omega}(x))\right) = \overline{\omega}\left(\pi_{\omega}(x)\right) = \omega(x) \ .
$$ 
d): Let $x \in B$ and $a\in A$, and notice that $b)$ implies that $A$ lies in the multiplicative domain of $F_{\omega}$. Since $\tau \in T_\beta({A})$ and $\omega \in S_{\beta}({B})$ we can use $c)$ and \eqref{ekms} to conclude that
\begin{align*}
&\overline{\tau}\left(\gamma_{b}(F_{\omega}(x)) \pi_{\tau}(a) \right)
   = \overline{\tau}\left(F_{\omega}(x) \pi_{\tau}(b^{*}a\alpha_{i\beta}(b)) \right)
   = \overline{\tau}\left( F_{\omega}(xb^{*}a\alpha_{i\beta}(b) )\right) \\ & = {\omega}\left(x b^{*}a \alpha_{i\beta}(b) \right) = \omega\left(bxb^{*}a \right)  = \overline{\tau} (F_{\omega}(bxb^{*})\pi_{\tau}(a)) \ .
\end{align*}
Since $\overline{\tau}$ is faithful we find that $F_{\omega}(bxb^{*})=\gamma_{b}(F_{\omega}(x))$.
\end{proof}

\begin{lemma} \label{thm67}
Let $\tau \in T_\beta({A})$. Let $F : {B} \to \pi_{\tau}({A})''$ satisfy that
\begin{itemize}
\item[a)] $F$ is a completely positive contraction,
\item[b)] $F|_{A} = \pi_{\tau}$, and
\item[c)] there is a subset $\mathcal F \subseteq N_\alpha({A})$ which spans a dense $\alpha$-invariant $*$-subalgebra of $B$ such that
$$
F(bxb^{*}) = \gamma_{b}\left(F(x)\right)
$$ 
for all $x\in {B}$ and all $b\in \mathcal{F}$. 
\end{itemize}
Then $\overline{\tau}\circ F \in S_{\beta}({B})$.
\end{lemma}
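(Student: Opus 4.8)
The plan is to show that $\omega:=\overline{\tau}\circ F$ is a $\beta$-KMS state for $\alpha$ on $B$. That $\omega$ is a state is immediate: it is positive because $F$ is positive and $\overline{\tau}$ is a state, and if $\{e_\lambda\}\subseteq A$ is an approximate unit for $B$ then $\omega(e_\lambda)=\overline{\tau}(\pi_\tau(e_\lambda))=\tau(e_\lambda)\to 1$, so $\|\omega\|=1$. To verify the KMS condition \eqref{06-06-20a} I would invoke \cite{BR}: it suffices to check $\omega(ba)=\omega(a\alpha_{i\beta}(b))$ for $a\in B$ and $b$ ranging over a dense $\alpha$-invariant $*$-subalgebra of $\alpha$-analytic elements. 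Since $\operatorname{span}\mathcal{F}$ is such a subalgebra, by linearity it is enough to treat $b\in\mathcal{F}$, and then, by norm-continuity in $a$ together with $B=\overline{\operatorname{span}}\,N_\alpha(A)$, it is enough to treat $a=c$ with $c\in N_\alpha(A)$.

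The heart of the argument is to run the computation in the proof of Lemma~\ref{lem17} d) in the opposite direction. First, by the semigroup property of $\gamma$ (Lemma~\ref{07-06-20}) together with the fact that $A$ lies in the multiplicative domain of $F$ (so that $F(axa^{*})=\pi_\tau(a)F(x)\pi_\tau(a^{*})=\gamma_{a}(F(x))$ for $a\in A$), property c) extends from $\mathcal{F}$ to the multiplicative semigroup $\mathcal{S}$ generated by $\mathcal{F}\cup A$: one has $F(gxg^{*})=\gamma_{g}(F(x))$ for all $g\in\mathcal{S}$ and $x\in B$. Now fix $g\in\mathcal{S}$, $x\in B$ and $a\in A$; since $g^{*}a\alpha_{i\beta}(g)\in A$ by Lemma~\ref{06-06-20g}, using b), the extended form of c), and \eqref{ekms} we get
\begin{align*}
\omega(gxg^{*}a) &=\overline{\tau}\bigl(F(gxg^{*})\pi_\tau(a)\bigr)=\overline{\tau}\bigl(\gamma_{g}(F(x))\pi_\tau(a)\bigr)\\
&=\overline{\tau}\bigl(F(x)\pi_\tau(g^{*}a\alpha_{i\beta}(g))\bigr)=\overline{\tau}\bigl(F(xg^{*}a\alpha_{i\beta}(g))\bigr)=\omega(xg^{*}a\alpha_{i\beta}(g)).
\end{align*}
Letting $a$ run through an approximate unit for $B$ contained in $A$ then gives $\omega(gxg^{*})=\omega(xg^{*}\alpha_{i\beta}(g))$ for all $g\in\mathcal{S}$ and $x\in B$; equivalently, the KMS identity $\omega(gy)=\omega(y\alpha_{i\beta}(g))$ holds for every $g\in\mathcal{S}$ and every $y$ in the closed left ideal $\overline{Bg^{*}}$.

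The remaining — and, I expect, hardest — step is to remove the restriction $y\in\overline{Bg^{*}}$, i.e. to prove $\omega(bc)=\omega(c\alpha_{i\beta}(b))$ for $b\in\mathcal{F}$ and an arbitrary normalizer $c\in N_\alpha(A)$. The extra structure I would exploit is that every $c\in N_\alpha(A)$ satisfies $c^{*}c,\;cc^{*}\in A$ (this follows from \eqref{06-06-20} by letting an element of an approximate unit for $B$ in $A$ tend to the identity), whence $bc$ has polar parts with $(bc)^{*}(bc)=c^{*}(b^{*}b)c\in A$ and $(bc)(bc)^{*}=b(cc^{*})b^{*}\in A$, and moreover $f\bigl(b(cc^{*})b^{*}\bigr)\in\overline{bAb^{*}}$ for every continuous $f$ with $f(0)=0$ (approximate $f$ by polynomials vanishing at $0$). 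Combining these facts with the $(\alpha,\beta)$-conformality of $\tau$ — that is, with \eqref{ekms} — should let one rewrite $\omega(bc)$ in terms of the covariant expressions already controlled in the previous paragraph and close the argument; I expect this reduction to be the main obstacle. Once $\omega(by)=\omega(y\alpha_{i\beta}(b))$ has been established for all $b\in\mathcal{F}$ and $y\in B$, it extends by linearity to all $b\in\operatorname{span}\mathcal{F}$, which is a dense $\alpha$-invariant $*$-subalgebra of $\alpha$-analytic elements, so $\omega$ satisfies \eqref{06-06-20a} and $\overline{\tau}\circ F\in S_{\beta}(B)$.
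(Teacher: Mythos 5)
Your preliminary steps are sound: $\overline{\tau}\circ F$ is a state, the reduction of the KMS condition to $b\in\mathcal F$ and $a$ in a dense subset is legitimate, and your covariance computation is correct — using the multiplicative domain of $F$, the extension of c) to the semigroup generated by $\mathcal F\cup A$, and \eqref{ekms} you do obtain $\omega(gxg^{*})=\omega(xg^{*}\alpha_{i\beta}(g))$ for all $x\in B$, which is precisely the middle portion of the paper's calculation. But the proof is not complete: the step you yourself flag as "the main obstacle" — passing from the identity on the left ideal $\overline{Bg^{*}}$ to the genuine KMS identity $\omega(bd)=\omega(d\alpha_{i\beta}(b))$ for arbitrary $d$ — is exactly where the real work lies, and your sketch (polar-type facts, $c^{*}c,cc^{*}\in A$, $f(bcc^{*}b^{*})\in\overline{bAb^{*}}$) does not carry it out, nor does it contain the two ideas that actually make it work.

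What the paper does to close this gap is the following. It works in the GNS representation of $\varphi=\overline{\tau}\circ F$ itself: since $bb^{*}\in A$, the range projection $P$ of $\pi_{\varphi}(b)$ (equal to that of $\pi_{\varphi}(bb^{*})$) lies in $\pi_{\varphi}(A)''$, so $\varphi(bd)=\overline{\varphi}(P\pi_{\varphi}(b)\pi_{\varphi}(d))$; approximating $P$ strongly by $\pi_{\varphi}(a_{n})$ with $a_{n}\in A$ and using the multiplicative domain together with the fact that $\overline{\tau}$ is a trace, one moves $P$ from the left to the right, $\varphi(bd)=\overline{\varphi}(\pi_{\varphi}(b)\pi_{\varphi}(d)P)$. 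Then $P$ is approximated from the right by $f_{n}(\pi_{\varphi}(bb^{*}))\pi_{\varphi}(bb^{*})$, which puts the argument of $\varphi$ into the covariant form $b\,(df_{n}(bb^{*})b)\,b^{*}$ where c) and \eqref{ekms} apply — this is where your computation takes over — yielding $\varphi(bd)=\overline{\varphi}(\pi_{\varphi}(d)P\pi_{\varphi}(\alpha_{i\beta}(b)))$. Finally, one must get rid of $P$ again: since $\alpha_{t}(b)\alpha_{t}(b)^{*}=bb^{*}$, the range projection of $\pi_{\varphi}(\alpha_{t}(b))$ equals $P$ for all real $t$, so $P\pi_{\varphi}(\alpha_{t}(b))=\pi_{\varphi}(\alpha_{t}(b))$, and by analytic continuation of $z\mapsto P\pi_{\varphi}(\alpha_{z}(b))-\pi_{\varphi}(\alpha_{z}(b))$ the same holds at $z=i\beta$, giving $\varphi(bd)=\varphi(d\alpha_{i\beta}(b))$. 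The insertion and removal of the range projection in the GNS representation of $\varphi$, and in particular the analytic-continuation argument $P\pi_{\varphi}(\alpha_{i\beta}(b))=\pi_{\varphi}(\alpha_{i\beta}(b))$, are the missing ingredients; without something playing their role your argument only proves the KMS identity against elements of $\overline{Bb^{*}}$, not against a dense subspace of $B$.
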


\begin{proof} 
Set $\varphi=\overline{\tau}\circ F$, which is clearly a state by a) and b). To prove the lemma it suffices to prove that $\varphi(bd)=\varphi(d\alpha_{i\beta}(b))$ when $b,d\in \mathcal{F}$. Now fix $b,d\in \mathcal{F}$. Notice that we can assume that $\lVert b \rVert =1$. Since ${A}$ contains an approximate unit for $B$, we have that $bb^{*}\in {A}$, which implies that $(bb^{*})^{1/n}\in {A}$, while $\pi_{\varphi}(bb^{*})^{1/n}$ converges to the range-projection $P$ of $\pi_{\varphi}(bb^{*})$ when $n \to \infty$, and hence $P\in \pi_{\varphi}({A})''$. However, the range projections of $\pi_{\varphi}(bb^{*})$ and $\pi_{\varphi}(b)$ coincide. Letting $\{a_{n}\}\subseteq {A}$ satisfy that $\pi_{\varphi}(a_{n})\to P$ in the strong operator topology, we get that
\begin{align*}
\varphi(bd)&=\overline{\varphi}(\pi_{\varphi}(b)\pi_{\varphi}(d))=
\overline{\varphi}(P\pi_{\varphi}(b)\pi_{\varphi}(d))
= \lim_{n\to \infty} \overline{\varphi}(\pi_{\varphi}(a_{n})\pi_{\varphi}(b)\pi_{\varphi}(d)) \\
&=\lim_{n\to \infty} \overline{\tau}\circ F (a_{n}bd)  \ .
\end{align*}
Condition $b)$ implies that $A$ is in the multiplicative domain of the completely positive contraction $F$. We can therefore continue the calculation as follows:
\begin{align*}
\varphi(bd)&= \lim_{n\to \infty} \overline{\tau}\left ( F (a_{n}) F(bd) \right) 
= \lim_{n\to \infty} \overline{\tau}\left (  F(bd) F (a_{n}) \right)= \lim_{n\to \infty} \overline{\tau}\circ F\left ( bda_{n} \right) \\
&= \lim_{n\to \infty} \overline{\varphi}(\pi_{\varphi}(b)\pi_{\varphi}(d)\pi_{\varphi}(a_{n}))  = \overline{\varphi}(\pi_{\varphi}(b)\pi_{\varphi}(d)P)\ .
\end{align*}
Now $P$ is the strong limit of a sequence $g_{n}(\pi_{\varphi}(bb^{*}))$ where $0\leq g_{n}\leq 1$ is an increasing sequence of continuous functions on $[0,\infty[$ such that $g_{n}$ is zero in a neighborhood of $0$ for all $n$ and $\lim_{n \to \infty} g_{n}(x) = 1$ for all $x>0$. It follows that we can find an increasing sequence of functions $\{f_{n}\}_{n=1}^{\infty} \subseteq C([0, \infty [)$ such that 
$$
f_{n}(\pi_{\varphi}(bb^{*}))\pi_{\varphi}(bb^{*}) \to P
$$
strongly as $n$ goes to $\infty$. Since $\overline{\varphi}$ is strongly continuous we get that
\begin{align*}
&\overline{\varphi}(\pi_{\varphi}(b)\pi_{\varphi}(d)P) =\lim_{n \to \infty}\overline{\varphi}(\pi_{\varphi}(b)\pi_{\varphi}(d)f_{n}(\pi_{\varphi}(bb^{*}))\pi_{\varphi}(bb^{*})) \\
&=\lim_{n  \to \infty}\overline{\tau}\left( F(bd f_{n}(bb^{*})bb^{*}) \right) =\lim_{n\to \infty}\overline{\tau}\left( \gamma_{b}(F(d f_{n}(bb^{*})b)) \right) \ .
\end{align*}
Let $\{u_k\}$ be an approximate unit for $B$ contained in $A$.
We can then apply \eqref{ekms} to find that
\begin{align*}
&\overline{\tau}\left( \gamma_{b}(F(d f_{n}(bb^{*})b)) \right) = \lim_{k \to \infty} \overline{\tau}\left( \gamma_{b}(F(d f_{n}(bb^{*})b)) \pi_\tau(u_k) \right) \\
& = \lim_{k \to \infty}\overline{\tau}\left( F(d f_{n}(bb^{*})b)\pi_{\tau}(b^{*}u_k\alpha_{i\beta}(b)) \right) = \overline{\tau}\left( F(d f_{n}(bb^{*})b)\pi_{\tau}(b^{*}\alpha_{i\beta}(b)) \right) \ .
\end{align*}
We can therefore continue the calculation to find that
\begin{align*}
\varphi(bd)&=\lim_{n\to \infty}\overline{\tau}\left( F(d f_{n}(bb^{*})b)\pi_{\tau}(b^{*}\alpha_{i\beta}(b)) \right) =\lim_{n\to \infty}\overline{\tau}\left( F(d f_{n}(bb^{*})bb^{*}\alpha_{i\beta}(b)) \right)\\
&=\lim_{n\to \infty}\overline{\varphi}
(\pi_{\varphi}(d)f_{n}(\pi_{\varphi}(bb^{*}))\pi_{\varphi}(bb^{*})\pi_{\varphi}(\alpha_{i\beta}(b))) \\
&=\overline{\varphi}\left( \pi_{\varphi}(d) P\pi_{\varphi}(\alpha_{i\beta}(b)) \right) \ .
\end{align*}
For the final step in the calculation observe that $\alpha_t(b)\alpha_t(b^*) = \alpha_t(bb^*) = bb^*$ which implies that $\pi_\varphi(\alpha_t(b))$ and $\pi_\varphi(b)$ have the same range projection. It follows that $P\pi_\varphi(\alpha_t(b)) - \pi_\varphi(\alpha_t(b)) = 0$ for all $t \in \mathbb R $, and since $z \mapsto P\pi_\varphi(\alpha_z(b)) - \pi_\varphi(\alpha_z(b))$ is entire it follows that it is constant $0$. In particular, $P\pi_\varphi(\alpha_{i\beta}(b)) = \pi_\varphi(\alpha_{i \beta}(b))$. We can therefore now complete the calculation to get that
$$
\varphi(bd)=\overline{\varphi}(\pi_{\varphi}(d) \pi_{\varphi}(\alpha_{i\beta}(b)) ) =\varphi(d\alpha_{i\beta}(b))  \ .
$$

\end{proof}

\begin{thm}\label{07-06-20d} Let $\tau \in T_\beta({A})$. The map
$$
F \mapsto \overline{\tau}\circ F
$$
is a bijection between the set of maps $F : B \to \pi_\tau(A)''$ satisfying conditions a), b) and c) in Lemma \ref{thm67} and the set $\left\{ \omega \in S_{\beta}({B}) \ | \ \omega|_{{A}}=\tau  \right\}$. 
\end{thm}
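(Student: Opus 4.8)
The plan is to assemble the two main lemmas already proved and then add a short uniqueness argument. First I would check that the map is well defined with the stated codomain: if $F : B \to \pi_\tau(A)''$ satisfies a), b), c) of Lemma \ref{thm67}, then $\overline{\tau}\circ F \in S_\beta(B)$ by that lemma, and $(\overline{\tau}\circ F)|_A = \overline{\tau}\circ \pi_\tau = \tau$ by condition b) together with the identity $\overline{\tau}\circ\pi_\tau = \tau$ from the GNS construction. Hence $F \mapsto \overline{\tau}\circ F$ does land in $\{\omega \in S_\beta(B) : \omega|_A = \tau\}$.

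For surjectivity I would invoke Lemma \ref{lem17}: given $\omega \in S_\beta(B)$ with $\omega|_A = \tau$, the map $F_\omega$ of \eqref{econd} is a completely positive contraction with $F_\omega|_A = \pi_\tau$ and $F_\omega(bxb^*) = \gamma_b(F_\omega(x))$ for all $b \in N_\alpha(A)$. Taking $\mathcal F = N_\alpha(A)$ — which spans a dense $\alpha$-invariant $*$-subalgebra of $B$ because $A$ is $\alpha$-regular and $N_\alpha(A)$ is closed under products, adjoints and each $\alpha_s$ (the last using that $A$ is $\alpha$-fixed, so $\alpha_{t+s}(b)A\alpha_s(b^*) = \alpha_s(\alpha_t(b)Ab^*) \subseteq A$) — we see $F_\omega$ satisfies a), b), c) of Lemma \ref{thm67}, and $\overline{\tau}\circ F_\omega = \omega$ by part c) of Lemma \ref{lem17}. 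So every such $\omega$ is attained.

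The only genuinely new point is injectivity. Suppose $F_1, F_2$ both satisfy a), b), c) and set $\omega := \overline{\tau}\circ F_1 = \overline{\tau}\circ F_2$. Condition b) forces $A$ into the multiplicative domain of each completely positive contraction $F_j$, so for $x \in B$ and $a \in A$ we get $\overline{\tau}(F_j(x)\pi_\tau(a)) = \overline{\tau}(F_j(xa)) = \omega(xa)$, independently of $j$. Thus $\overline{\tau}(n\,\pi_\tau(a)) = 0$ for $n := F_1(x) - F_2(x) \in \pi_\tau(A)''$ and all $a \in A$. Since $m \mapsto \overline{\tau}(nm)$ is a normal (hence weak-$*$ continuous) functional on $\pi_\tau(A)''$ vanishing on the weak-$*$ dense set $\pi_\tau(A)$, it vanishes on all of $\pi_\tau(A)''$; taking $m = n^*$ gives $\overline{\tau}(nn^*) = 0$, and faithfulness of $\overline{\tau}$ on $\pi_\tau(A)''$ (the same fact used in Lemma \ref{lem17}) yields $n = 0$. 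Hence $F_1 = F_2$.

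I expect no serious obstacle: the substantive analysis has been front-loaded into Lemmas \ref{thm67} and \ref{lem17}. The points that require a little care are the bookkeeping around the identification $\pi_\tau(A)'' \cong \pi_\omega(A)'' \subseteq \pi_\omega(B)''$ from Lemma \ref{lemma62} when quoting Lemma \ref{lem17}, and the verification that $N_\alpha(A)$ legitimately plays the role of the set $\mathcal F$ in Lemma \ref{thm67}; both are routine given what precedes.
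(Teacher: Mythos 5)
Your proposal is correct and follows essentially the same route as the paper: well-definedness and surjectivity are exactly the combination of Lemma \ref{thm67} and Lemma \ref{lem17} (with $\mathcal F = N_\alpha(A)$, legitimate by $\alpha$-regularity), and your injectivity argument via the multiplicative domain plus normality and faithfulness of $\overline{\tau}$ is the same density-and-faithfulness step the paper uses, just spelled out in slightly more detail.
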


\begin{proof}
Lemma \ref{thm67} and Lemma \ref{lem17} imply that the map is well-defined and surjective.  To see that it is injective, assume that $F_{1}$ and $F_{2}$ are two maps satisfying conditions a), b) and c) in Lemma \ref{thm67}, and that $\overline{\tau}\circ F_{1} = \overline{\tau}\circ F_{2}$. Let $x\in {B}$ and $a\in {A}$. Then
\begin{equation*}
0=\overline{\tau}\circ F_{1}(ax)-\overline{\tau}\circ F_{2}(ax)
=\overline{\tau}(\pi_{\tau}(a)(F_{1}(x)-F_{2}(x))) \ .
\end{equation*}
Since $\overline{\tau}$ is strongly continuous and faithful we conclude that $F_1 = F_2$.
\end{proof}

In particular, it follows that if $F$ is a map satisfying conditions a), b) and c) in Lemma \ref{thm67} then $F(bxb^*) = \gamma_b(F(x))$ for all $x \in B$ and all $b \in N_\alpha(A)$.

\section{Groupoid $C^*$-algebras: Relation to Neshveyev's theorem}\label{Sec4}

In this section we show in which way Theorem \ref{07-06-20d} can be considered as a generalization of Neshveyev's result from \cite{N} where he extends Renaults work on KMS states for flows on groupoid $C^*$-algebras from the case of principal groupoids handled in \cite{Re} to general \'etale groupoids. We refer to \cite{Re} for an introduction to groupoid $C^*$-algebras and to \cite{N} for the terminology and notation we shall use here.

Let $\mathcal{G}$ be a locally compact second countable Hausdorff \'etale groupoid. The maps $r(g)=gg^{-1}$ and $s(g)=g^{-1}g$ on $\mathcal{G}$ are then local homeomorphims, and their range $\mathcal{G}^{(0)}$ is the unit space. We set $\mathcal{G}^{x}=r^{-1}(x)$, $\mathcal{G}_{x}=s^{-1}(x)$ and $\mathcal{G}^{x}_{x}=\mathcal{G}_{x}\cap \mathcal{G}^{x}$ for $x\in \mathcal{G}^{(0)}$, and recall that on the dense $*$-subalgebra $C_{c}(\mathcal{G})$ of the (full) groupoid $C^{*}$-algebra $C^{*}(\mathcal{G})$ the product is given by
\begin{equation}\label{eGprod}
(f_{1}*f_{2})(g)=\sum_{h\in \mathcal{G}^{r(g)}} f_{1}(h) f_{2}(h^{-1}g)
\end{equation}
and the involution by $f^{*}(g)=\overline{f(g^{-1})}$.

Let $\mu$ be a Borel probability measure on the unit space $\mathcal{G}^{(0)}$ of $\mathcal G$. The fundamental notion in \cite{N} is the following: A collection of states $\{\varphi_{x}\}_{x\in \mathcal{G}^{(0)}}$ with $\varphi_{x}$ a state on $C^{*}(\mathcal{G}_{x}^{x})$ is a \emph{$\mu$-measurable field of states} when the map
$$
\mathcal{G}^{(0)} \ni x \to \sum_{g\in \mathcal{G}_{x}^{x}} f(g)\varphi_{x}(u_{g})
$$
is $\mu$-measurable for all $f\in C_{c}(\mathcal{G})$, where $u_g, \ g \in \mathcal G_x^x$, denotes the canonical unitaries in $C^*(\mathcal G_x^x)$. Two $\mu$-measurable fields $\{\varphi_{x}\}_{x\in \mathcal{G}^{(0)}}$ and $\{\psi_{x}\}_{x\in \mathcal{G}^{(0)}}$ are identified if $\psi_{x}=\varphi_{x}$ for $\mu$-almost all $x\in \mathcal{G}^{(0)}$.

\begin{lemma} \label{prop1}
The following two sets are in bijective correspondence:
\begin{enumerate}
\item The set of $\mu$-measurable fields of states, and
\item the set of contractive completely positive linear maps $E:C^{*}(\mathcal{G})\to \pi_{\mu}(C_{0}(\mathcal{G}^{(0)}))''$ with $E(f)=\pi_{\mu}(f)$ for all $f\in C_{0}(\mathcal{G}^{(0)})$.
\end{enumerate}
\end{lemma}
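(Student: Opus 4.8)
The plan is to exhibit both sides of the claimed correspondence as data attached fibrewise over $\mathcal{G}^{(0)}$, identifying $\pi_{\mu}(C_{0}(\mathcal{G}^{(0)}))''$ with $L^{\infty}(\mathcal{G}^{(0)},\mu)$ throughout. The single-point ingredient I take from \cite{N} is the following: for each $x\in\mathcal{G}^{(0)}$ and each state $\varphi$ on $C^{*}(\mathcal{G}_{x}^{x})$ the assignment $f\mapsto\sum_{g\in\mathcal{G}_{x}^{x}}f(g)\varphi(u_{g})$, $f\in C_{c}(\mathcal{G})$, extends to a state $\widetilde{\varphi}$ on $C^{*}(\mathcal{G})$ (coming from the representation of $C^{*}(\mathcal{G})$ induced from $\varphi$ on $\ell^{2}(\mathcal{G}_{x})\otimes_{C^{*}(\mathcal{G}_{x}^{x})}H_{\varphi}$), and $\varphi\mapsto\widetilde{\varphi}$ is a bijection from the state space of $C^{*}(\mathcal{G}_{x}^{x})$ onto the set of states $\omega$ on $C^{*}(\mathcal{G})$ satisfying $\omega(hah')=h(x)h'(x)\omega(a)$ for all $h,h'\in C_{0}(\mathcal{G}^{(0)})$ and all $a\in C^{*}(\mathcal{G})$.

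Given a $\mu$-measurable field $\{\varphi_{x}\}_{x\in\mathcal{G}^{(0)}}$, I would define $E$ by $E(a)(x):=\widetilde{\varphi_{x}}(a)$. For $a\in C_{c}(\mathcal{G})$ the function $x\mapsto E(a)(x)$ is $\mu$-measurable by the very definition of a $\mu$-measurable field, and since $\lvert\widetilde{\varphi_{x}}(a)\rvert\le\lVert a\rVert$ uniformly in $x$, approximating an arbitrary $a\in C^{*}(\mathcal{G})$ in norm by elements of $C_{c}(\mathcal{G})$ shows that $E(a)$ is a bounded $\mu$-measurable function, hence an element of $L^{\infty}(\mathcal{G}^{(0)},\mu)$. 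Complete positivity and contractivity of $E$ follow by evaluating fibrewise: for a positive matrix $[a_{ij}]$ over $C^{*}(\mathcal{G})$ the matrix $[E(a_{ij})(x)]=[\widetilde{\varphi_{x}}(a_{ij})]$ is positive for every $x$ because $\widetilde{\varphi_{x}}$ is a state, and $\lVert E(a)\rVert_{\infty}\le\lVert a\rVert$ for the same reason; finally $\widetilde{\varphi_{x}}(f)=f(x)$ for $f\in C_{0}(\mathcal{G}^{(0)})$, so $E(f)=\pi_{\mu}(f)$.

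Conversely, let $E\colon C^{*}(\mathcal{G})\to L^{\infty}(\mathcal{G}^{(0)},\mu)$ be a contractive completely positive map with $E(f)=\pi_{\mu}(f)$ for $f\in C_{0}(\mathcal{G}^{(0)})$. Since $C^{*}(\mathcal{G})$ is separable, $(\mathcal{G}^{(0)},\mu)$ is a standard probability space, and $E$ is unital in the appropriate sense (because $C_{0}(\mathcal{G}^{(0)})$ contains an approximate unit for $C^{*}(\mathcal{G})$ on which $E$ acts as $\pi_{\mu}$), I would disintegrate $E$: choosing pointwise representatives of the $L^{\infty}$-classes $E(d)$ for $d$ in a countable dense $*$-subalgebra and discarding a null set on which some of the countably many positivity, linearity and norm relations that characterise a state fail, one obtains a $\mu$-measurable family $x\mapsto\omega_{x}$ of states on $C^{*}(\mathcal{G})$ with $E(a)(x)=\omega_{x}(a)$ for $\mu$-a.e.\ $x$, each fixed $a$. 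Because $E$ restricts to a $*$-homomorphism on $C_{0}(\mathcal{G}^{(0)})$ and is contractive completely positive, $C_{0}(\mathcal{G}^{(0)})$ lies in the multiplicative domain of $E$, so $E(hah')=\pi_{\mu}(h)E(a)\pi_{\mu}(h')$; restated fibrewise, and after fixing one conull set that works simultaneously for a countable dense family of $h,h',a$, this yields $\omega_{x}(hah')=h(x)h'(x)\omega_{x}(a)$ for a.e.\ $x$. The single-point statement then provides, for a.e.\ $x$, a unique state $\varphi_{x}$ on $C^{*}(\mathcal{G}_{x}^{x})$ with $\widetilde{\varphi_{x}}=\omega_{x}$ (define $\varphi_{x}$ arbitrarily on the exceptional null set), and $x\mapsto\sum_{g\in\mathcal{G}_{x}^{x}}f(g)\varphi_{x}(u_{g})=\omega_{x}(f)=E(f)(x)$ is measurable, so $\{\varphi_{x}\}$ is a $\mu$-measurable field.

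To finish I would check that the two assignments are mutually inverse, which is forced by the $\mu$-a.e.\ uniqueness of the disintegration: starting from a field and forming $E$, its disintegration satisfies $\omega_{x}=\widetilde{\varphi_{x}}$ a.e., so one recovers the same field up to the identification of fields agreeing $\mu$-a.e.; and starting from $E$, reassembling gives the map $a\mapsto[\,x\mapsto\widetilde{\varphi_{x}}(a)\,]=[\,x\mapsto\omega_{x}(a)\,]=E(a)$. I expect the substantive work to lie entirely in the reverse direction: extracting a genuinely $\mu$-measurable fibrewise family $\{\omega_{x}\}$ from the $L^{\infty}$-valued map $E$ and arranging a single conull set on which the multiplicative-domain identity holds for enough elements. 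This is the routine but slightly delicate bookkeeping of disintegrating a completely positive map over a measure space; once it is in place, the passage from $\omega_{x}$ to $\varphi_{x}$ is handed off to \cite{N}.
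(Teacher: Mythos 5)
Your proposal is correct, and the forward direction (field $\Rightarrow$ map $E$) is essentially the paper's: both build $E(f)(x)=\sum_{g\in\mathcal G_x^x}f(g)\varphi_x(u_g)$ from Neshveyev's extension-to-a-state result, the only cosmetic difference being that the paper gets complete positivity for free from the abelianness of the range while you check it fibrewise. Where you genuinely diverge is the reverse direction. The paper never disintegrates $E$ itself: it composes with the normal state $\overline{\mu}$, observes via the multiplicative domain that $\overline{\mu}\circ E$ is a state commuting with $C_0(\mathcal G^{(0)})$ and restricting to $\mu$, feeds that single state into the global Theorem 1.1 of \cite{N} to obtain a measurable field, and then identifies the reconstructed map with $E$ by integrating $hE(f)$ against $\mu$ for $h\in C_c(\mathcal G^{(0)})$; injectivity likewise rides on the uniqueness clause of Theorem 1.1 applied to $\overline{\mu}\circ E$. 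You instead perform the disintegration by hand -- countable dense $*$-subalgebra, pointwise representatives, a single conull set on which $\mathbb Q[i]$-linearity, positivity, contractivity, the approximate-unit normalization and the multiplicative-domain identity all hold -- and then invoke only the point-mass ($\mu=\delta_x$) case of Neshveyev's correspondence fibrewise. Both routes work; the trade-off is that the paper's detour through $\overline{\mu}\circ E$ outsources all the measurable-selection bookkeeping to \cite{N} (whose proof of Theorem 1.1 is exactly such a disintegration), at the price of the small extra $E=F$ a.e. verification, whereas your argument is more self-contained at the fibre level but reproves a chunk of Neshveyev's theorem, and its correctness hinges on executing carefully the steps you rightly flag as delicate: the approximate unit $e_n\nearrow 1$ in $C_c(\mathcal G^{(0)})$ to get $\|\omega_x\|=1$ a.e., and the passage from the countable family of relations to all $h,h',a$ by continuity before applying the pointwise statement.
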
 

\begin{proof} To define the map from $(1)$ to $(2)$, let $\pi:C_{0}(\mathcal{G}^{(0)}) \to B(L^{2}(\mathcal{G}^{(0)}, \mu))$ be the representation by multiplication operators. Since $(L^{2}(\mathcal{G}^{(0)}, \mu), \pi, 1)$ is a GNS triple for $\mu$ we identify $M:=L^{\infty}(\mathcal{G}^{(0)}, \mu)$ with $ \pi_{\mu}(C_{0}(\mathcal{G}^{(0)}))''$. Let $\{\varphi_{x}\}_{x\in \mathcal{G}^{(0)}}$ be a $\mu$-measurable field of states. By (the proof of) Theorem 1.1 in \cite{N} the map
$$
C_{c}(\mathcal{G}) \ni f \to \sum_{g\in \mathcal{G}_{x}^{x}} f(g) \varphi_{x}(u_{g})
$$
extends to a state on $C^{*}(\mathcal{G})$. We can therefore define a positive linear map $E:C_{c}(\mathcal{G})\to M$ such that 
$$
E(f)(x)=\sum_{g\in \mathcal{G}_{x}^{x}} f(g) \varphi_{x}(u_{g}) \text{ for } x\in \mathcal{G}^{(0)} \ .
$$
Since $\lVert E(f) \rVert_{\infty} \leq \lVert f \rVert$ we can extend $E$ to a positive linear contractive map $E:C^{*}(\mathcal{G})\to M$. Since $M$ is abelian $E$ is actually completely positive, cf. e.g. A.3 on page 266 in \cite{NS}. For $f\in C_{0}(\mathcal{G}^{(0)})$ we have that $\sum_{g\in \mathcal{G}_{x}^{x}} f(g) \varphi_{x}(u_{g})=f(x)$ and hence $E(f)=\pi_{\mu}(f)$. Thus we have constructed a map from (1) to (2). Let $\overline{\mu}$ be the normal state on $M$ extending $\mu$. Then
$$
\overline{\mu}\circ E(f)  = \int_{\mathcal G^{(0)}} \sum_{g\in \mathcal{G}_{x}^{x}} f(g) \varphi_{x}(u_{g})  \ \mathrm{d}\mu(x)
$$
when $f \in C_c(\mathcal G)$. It follows therefore from Theorem 1.1 in \cite{N} that the map from (1) to (2) is injective. To see that it is surjective, let $F:C^{*}(\mathcal{G})\to M$ be a map satisfying the conditions in $(2)$. Then $\overline{\mu} \circ F$ is a state on $C^*(\mathcal G)$ and since $C_0(\mathcal G^{(0)})$ is in the multiplicative domain of $F$ we find that
$$
\overline{\mu} \circ F(fa) = \overline{\mu}(\pi_\mu(f)F(a)) = \overline{\mu}(F(a)\pi_\mu(f)) = \overline{\mu} \circ F(af) 
$$
for all $f \in C_0(\mathcal G^{(0)})$ and all $a \in C^*(\mathcal G)$. Since $\overline{\mu} \circ F$ restricts to $\mu$ on $C_0(\mathcal G^{(0)})$ it follows from Theorem 1.1 in \cite{N} that $\overline{\mu}\circ F$ is given by a $\mu$-measurable field $\{\varphi_{x}\}_{x\in \mathcal{G}^{(0)}}$. Let $E:C^{*}(\mathcal{G})\to M$ be the map obtained from this field as in the first part of the proof. For $f\in C_{c}(\mathcal{G})$ and $h\in C_{c}(\mathcal{G}^{(0)})$ we have that
\begin{align*}
&\int_{\mathcal{G}^{(0)}} h E(f) \ d \mu =  \int_{\mathcal{G}^{(0)}}  E(h f) \ d \mu = \overline{\mu}\circ E(hf) \\
&= \int_{\mathcal G^{(0)}} \sum_{g\in \mathcal{G}_{x}^{x}} (hf)(g) \varphi_{x}(u_{g})  \ \mathrm{d}\mu(x) = \overline{\mu} \circ F(hf) =\int_{\mathcal{G}^{(0)}} h F(f) \ d \mu 
\end{align*}
and hence $E(f)=F(f)$ $\mu$-almost everywhere, i.e. $E = F$.
\end{proof}

\begin{lemma}\label{08-06-20}
Let $\{\varphi_{x}\}_{x\in \mathcal{G}^{(0)}}$ be a $\mu$-measurable field of states for a quasi-invariant probability measure $\mu$ and let $E: C^{*}(\mathcal{G})\to \pi_{\mu}(C_{0}(\mathcal{G}^{(0)}))''$ be the map associated to $\{\varphi_{x}\}_{x\in \mathcal{G}^{(0)}}$ as in Lemma \ref{prop1}. The following two conditions are equivalent:
\begin{enumerate}
\item $\varphi_{x}(u_{g})=\varphi_{r(h)}(u_{hgh^{-1}})$ for $\mu$-a.e. $x\in \mathcal{G}^{(0)}$ and all $g\in \mathcal{G}_{x}^{x}$ and $h\in \mathcal{G}_{x}$,
\item $E(f^{*}af)=f^{*}E(a)f$ for all $a\in C^{*}(\mathcal{G})$ and all $f\in C_{c}(\mathcal{G})$ supported in a bisection.
\end{enumerate}
\end{lemma}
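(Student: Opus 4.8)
The plan is to turn condition (2) into a completely explicit pointwise identity on $\mathcal{G}^{(0)}$ and read off its equivalence with (1). Throughout I write $M=L^{\infty}(\mathcal{G}^{(0)},\mu)=\pi_{\mu}(C_{0}(\mathcal{G}^{(0)}))''$ as in the proof of Lemma \ref{prop1}. First I would record a reduction: for a fixed $f\in C_{c}(\mathcal{G})$ supported in a bisection $U$, the assignments $a\mapsto E(f^{*}af)$ and $a\mapsto f^{*}E(a)f$ are bounded linear maps $C^{*}(\mathcal{G})\to M$, the second being well defined and bounded precisely because $\mu$ is quasi-invariant (this forces the partial homeomorphism $r\circ(s|_{U})^{-1}$ to carry $\mu$-null sets to $\mu$-null sets, so that $f^{*}(\cdot)f$ extends normally from $C_{0}(\mathcal{G}^{(0)})$ to $M$). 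Hence it is enough to check (2) for $a\in C_{c}(\mathcal{G})$, and, since $\mathcal{G}$ is étale and second countable, it suffices to let $f$ range over functions supported in the members of a fixed countable family of bisections covering $\mathcal{G}$.

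Next I would compute both sides. For $x\in s(U)$ let $\sigma(x)$ be the unique element of $U$ with $s(\sigma(x))=x$, and set $y=r(\sigma(x))$; for $x\notin s(U)$ both sides vanish. A direct manipulation of the convolution formula \eqref{eGprod}, using that $s|_{U}$ and $r|_{U}$ are injective, gives $(f^{*}af)(g)=|f(\sigma(x))|^{2}\,a(\sigma(x)\,g\,\sigma(x)^{-1})$ for $g\in\mathcal{G}_{x}^{x}$, and likewise $(f^{*}mf)(x)=|f(\sigma(x))|^{2}\,m(y)$ for $m\in C_{0}(\mathcal{G}^{(0)})$, hence $\mu$-a.e.\ for $m\in M$. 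Inserting the first formula into the defining expression for $E$ from Lemma \ref{prop1} and re-indexing the sum over $\mathcal{G}_{x}^{x}$ by the bijection $g\mapsto g'=\sigma(x)\,g\,\sigma(x)^{-1}$ onto $\mathcal{G}_{y}^{y}$ yields
\[
E(f^{*}af)(x)=|f(\sigma(x))|^{2}\sum_{g'\in\mathcal{G}_{y}^{y}}a(g')\,\varphi_{x}\big(u_{\sigma(x)^{-1}g'\sigma(x)}\big),
\]
while the formula for $f^{*}mf$ applied to $m=E(a)$ gives
\[
(f^{*}E(a)f)(x)=|f(\sigma(x))|^{2}\,E(a)(y)=|f(\sigma(x))|^{2}\sum_{g'\in\mathcal{G}_{y}^{y}}a(g')\,\varphi_{y}(u_{g'}),
\]
using the representative of $E(a)$ supplied by Lemma \ref{prop1}.

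Finally I would compare the two displays. Fix $x$ with $f(\sigma(x))\neq 0$. Since $\mathcal{G}_{y}^{y}$ is discrete in $\mathcal{G}$, testing equality against functions $a\in C_{c}(\mathcal{G})$ supported near a single point of $\mathcal{G}_{y}^{y}$ shows that $E(f^{*}af)(x)=(f^{*}E(a)f)(x)$ for all such $a$ if and only if $\varphi_{x}(u_{\sigma(x)^{-1}g'\sigma(x)})=\varphi_{y}(u_{g'})$ for every $g'\in\mathcal{G}_{y}^{y}$; writing $h=\sigma(x)\in\mathcal{G}_{x}$ and $g=h^{-1}g'h$ this is precisely $\varphi_{x}(u_{g})=\varphi_{r(h)}(u_{hgh^{-1}})$. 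For (1)$\Rightarrow$(2) I note that (1) provides a single $\mu$-conull set off which this holds for all $g$ and $h$, hence simultaneously for all $U$ and $f$. For (2)$\Rightarrow$(1) I would intersect the countably many $\mu$-conull sets coming from the chosen bisections, from a countable family of $f$'s whose nonvanishing sets cover each bisection, and from a countable $\|\cdot\|_{\infty}$-dense subset of each $C_{c}(\mathcal{G})$ with support in a fixed compact set; on the resulting conull set the identity holds for every $g$ and $h$, because every arrow of $\mathcal{G}$ lies in one of the chosen bisections. Here the passage from a dense set of $a$'s to all of $C_{c}(\mathcal{G})$ uses that on functions supported in a fixed compact set $\|\cdot\|_{C^{*}(\mathcal{G})}$ is dominated by a multiple of $\|\cdot\|_{\infty}$, so that sup-norm convergence passes through $E$ and through $f^{*}(\cdot)f$.

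The convolution computation in the second paragraph is routine once the bisection bookkeeping is set up; the one genuinely delicate point is the measure-theoretic step in the last paragraph, namely combining countably many null sets so that a single $\mu$-conull set serves for all $g$ and $h$ (equivalently for all $a$, $f$ and $U$). This works exactly because $\mathcal{G}$ is second countable, in the spirit of the measurability arguments underlying Theorem 1.1 of \cite{N}.
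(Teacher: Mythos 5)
Your proof is correct and follows essentially the same route as the paper's: the same convolution computation of $(f^{*}af)(g)=|f(\sigma(x))|^{2}a(\sigma(x)g\sigma(x)^{-1})$ on the isotropy, the pointwise comparison of $E(f^{*}af)$ with $f^{*}E(a)f$ via re-indexing $g\mapsto\sigma(x)g\sigma(x)^{-1}$, and a countable cover of $\mathcal{G}$ by bisections to assemble a single conull set. Your only additions are to make explicit what the paper leaves implicit, namely the density reduction to $a\in C_{c}(\mathcal{G})$, the role of quasi-invariance in making $f^{*}E(a)f$ well defined on $L^{\infty}(\mu)$, and the countable null-set bookkeeping.
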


\begin{proof}
Assume that $k\in C_{c}(\mathcal{G})$ and $f$ is supported in a bisection $W$. Fix $x\in \mathcal{G}^{(0)}$ and a $g\in \mathcal{G}_{x}^{x}$. Using the definition of the product \eqref{eGprod} one can show that
\begin{equation}\label{eprod}
(f^{*}kf)(g) = 
\begin{cases}
|f(h)|^{2} k(hg h^{-1}) &  \text{ when } x=s(h) \text{ for a } h\in W \\
0 & \text{ for } x\notin s(W) \ .
\end{cases}
\end{equation}
If $x=s(h)$ for a $h\in W$ this implies that
\begin{align} \label{eprod2}
E(f^{*}kf)(x)&=\sum_{g\in\mathcal{G}_{x}^{x}} (f^{*}kf)(g) \varphi_{x}(u_{g}) \nonumber \\
&=|f(h)|^{2}\sum_{g\in\mathcal{G}_{x}^{x}} k(hg h^{-1}) \varphi_{x}(u_{g})   ,
\end{align}
while $E(f^{*}kf)(x)=0$ for $x\notin s(W)$. Since $f^{*}E(k)f$ is supported on $\mathcal{G}^{(0)}$ we can use \eqref{eprod} to get
\begin{equation*}
(f^{*}E(k)f)(x) = 
\begin{cases}
|f(h)|^{2} E(k)(h h^{-1}) &  \text{ when } x=s(h) \text{ for a } h\in W \\
0 & \text{ for } x \notin s(W) \ .
\end{cases}
\end{equation*} 
When $x=s(h)$ for a $h\in W$ this implies that
\begin{align} \label{eprod3}
(f^{*}E(k)f)(x)&=|f(h)|^{2} E(k)(r(h)) \nonumber \\
&= |f(h)|^{2}\sum_{g\in\mathcal{G}_{r(h)}^{r(h)}} k(g ) \varphi_{r(h)}(u_{g})  \ .
\end{align}
If we assume that $(1)$ is true, we get for $\mu$-almost all $x\in s(W)$ that
$$
\sum_{g\in\mathcal{G}_{x}^{x}} k(hg h^{-1}) \varphi_{x}(u_{g})
=\sum_{g\in\mathcal{G}_{x}^{x}} k(hg h^{-1}) \varphi_{r(h)}(u_{hgh^{-1}})
=\sum_{g\in\mathcal{G}_{r(h)}^{r(h)}} k(g ) \varphi_{r(h)}(u_{g})
$$
and comparing \eqref{eprod2} and \eqref{eprod3} will then imply that $(2)$ is true. If $(2)$ is true, we can use that $k \in C_c(\mathcal G)$ and $f$ are arbitrary to deduce from \eqref{eprod2} and \eqref{eprod3} that
$\varphi_{x}(u_{g})=\varphi_{r(h)}(u_{hgh^{-1}})$ for $\mu$-a.e. $x\in s(W)$ and all $g\in \mathcal{G}_{x}^{x}$ and $h\in W$. Finally, since $\mathcal G$ can be covered by a countable collection of bisections $W$ we conclude that $(1)$ holds.
\end{proof}

Let $c : \mathcal G \to \mathbb R$ be a continuous groupoid homomorphism, called an $\mathbb R$-valued $1$-cocycle in \cite{N}. The associated flow $\sigma^c$ is defined such that
$$
\sigma^c_t(f)(g) = e^{itc(g)}f(g)
$$
when $f \in C_c(\mathcal{G})$. Since any element $f \in C_c(\mathcal G)$ which is supported in a bisection is an $(C_0(\mathcal G^{(0)}),\sigma^c)$-normalizer it is clear that $C_0(\mathcal G^{(0)})$ is $\sigma^c$-regular and hence Theorem \ref{thm67} applies. Using the lemmas above we obtain in this way Theorem \ref{thm55}, which is an extension of Theorem 1.3 in \cite{N}. Notice that condition (iii) in Theorem 1.3 in \cite{N} does not apply in our setting since we have a different definition of $0$-KMS states; we do not require them to be $\sigma^c$-invariant.

\begin{thm} \label{thm55} Let $\beta \in \mathbb R$. Let $\mu$ be a quasi-invariant Borel probability measure on $\mathcal{G}^{(0)}$ with Radon Nikodym cocycle $e^{-\beta c}$. The following three sets are in bijective correspondence
\begin{enumerate}
\item The $\beta$-KMS states that restricts to $\mu$ on $C_{0}(\mathcal{G}^{(0)})$
\item The contractive completely positive  maps 
$$
E:C^{*}(\mathcal{G})\to \pi_{\mu}(C_{0}(\mathcal{G}^{(0)}))''
$$ 
satisfying that $E|_{C_{0}(\mathcal{G}^{(0)})}=\pi_{\mu}$ and that $E(f^{*}af)=f^{*}E(a)f$ for all $a\in C^{*}(\mathcal{G})$ and $f \in C_c(\mathcal G)$ supported in a bisection.
\item The $\mu$-measurable fields of tracial states $\{\varphi_{x}\}_{x\in \mathcal{G}^{(0)}}$ that satisfies $\varphi_{x}(u_{g})=\varphi_{r(h)}(u_{hgh^{-1}})$ for $\mu$-a.e. $x\in \mathcal{G}^{(0)}$, all $g\in \mathcal{G}_{x}^{x}$ and $h\in \mathcal{G}_{x}$.
\end{enumerate}
The map from $(3)$ to $(2)$ is given by sending $\{\varphi_{x}\}_{x\in \mathcal{G}^{(0)}}$ to $E$, where 
$$
E(f)(x)=\sum_{g\in \mathcal{G}_{x}^{x}} f(g) \varphi_{x}(u_{g}) \text{ for } f\in C_{c}(\mathcal{G})  \ ,
$$
and the map from $(2)$ to $(1)$ is given by sending $E$ to the state
$$
C^{*}(\mathcal{G}) \ni a \mapsto \int_{\mathcal{G}^{(0)}} E(a) \ d \mu  \ .
$$

\end{thm}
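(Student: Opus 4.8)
The plan is to specialize the correspondence of Section~\ref{Sec2} to $B=C^*(\mathcal G)$, $A=C_0(\mathcal G^{(0)})$ and $\alpha=\sigma^c$; as recalled above $A$ is then $\sigma^c$-regular, and we identify $\pi_\mu(A)''$ with $M=L^\infty(\mathcal G^{(0)},\mu)$ and $\overline\mu$ with integration against $\mu$. The first step is to check that $\mu\in T_\beta(A)$. For this let $\omega_0$ be the state on $C^*(\mathcal G)$ extending the positive contractive functional $C_c(\mathcal G)\ni f\mapsto\int_{\mathcal G^{(0)}}f(x)\,d\mu(x)$ (restriction to $\mathcal G^{(0)}$ is positive and contractive on $C^*(\mathcal G)$ since $\mathcal G$ is \'etale). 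A computation with the convolution \eqref{eGprod} --- essentially Renault's original one --- shows that, because $\mu$ is quasi-invariant with Radon--Nikodym cocycle $e^{-\beta c}$ in the sense of \cite{N}, one has $\omega_0(f*g)=\omega_0\big(g*\sigma^c_{i\beta}(f)\big)$ for all $f,g\in C_c(\mathcal G)$; since $C_c(\mathcal G)$ is a dense $\sigma^c$-invariant $*$-subalgebra of $\sigma^c$-analytic elements, $\omega_0$ is a $\beta$-KMS state, and \eqref{06-06-20e} gives $\mu=\omega_0|_A\in T_\beta(A)$.

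Next I would apply \refthm{07-06-20d} with $\tau=\mu$ and $\mathcal F=\{f\in C_c(\mathcal G):\supp f\text{ lies in an open bisection}\}$. This $\mathcal F$ is contained in $N_{\sigma^c}(A)$, is $\sigma^c$-invariant, and spans the dense $*$-subalgebra $C_c(\mathcal G)$ of $B$ (using partitions of unity subordinate to finite covers of compact sets by open bisections, available since $\mathcal G$ is \'etale). Hence $F\mapsto\overline\mu\circ F$, i.e.\ $F\mapsto\big(a\mapsto\int_{\mathcal G^{(0)}}F(a)\,d\mu\big)$, is a bijection from the maps $F:C^*(\mathcal G)\to M$ satisfying a), b), c) of \reflemma{thm67} onto the set (1). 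It remains to recognize these three conditions as the conditions on $E$ in (2): a) and b) are identical, and for c) one checks that for $f$ supported in a bisection $\gamma_{f^*}$ acts on $M$ as $m\mapsto f^*mf$ --- indeed $\gamma_{f^*}\pi_\mu(g)=\pi_\mu(f^*gf)$ for $g\in C_0(\mathcal G^{(0)})$, with $f^*gf\in C_0(\mathcal G^{(0)})$ since $f$ is bisection-supported, and both maps are normal in $m$ and agree on the weakly dense $\pi_\mu(C_0(\mathcal G^{(0)}))$, the expression $f^*mf$ being meaningful on $M$ precisely because $\mu$ is quasi-invariant. So c), applied with $f^*\in\mathcal F$ in place of $b$, reads $F(f^*xf)=\gamma_{f^*}(F(x))=f^*F(x)f$ for all $x\in C^*(\mathcal G)$, and conversely, by the remark following \refthm{07-06-20d}, any $F$ satisfying a), b) and this intertwining for all bisection-supported $f$ satisfies c). This gives the bijection (1)$\leftrightarrow$(2) and the stated formula for the map (2)$\to$(1).

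For (2)$\leftrightarrow$(3) I would combine \reflemma{prop1} and \reflemma{08-06-20}. By \reflemma{prop1} the maps $E:C^*(\mathcal G)\to M$ with $E|_{C_0(\mathcal G^{(0)})}=\pi_\mu$ correspond bijectively to the $\mu$-measurable fields of states $\{\varphi_x\}$, the bijection sending $\{\varphi_x\}$ to $E(f)(x)=\sum_{g\in\mathcal G_x^x}f(g)\varphi_x(u_g)$; and by \reflemma{08-06-20} the condition $E(f^*af)=f^*E(a)f$ (for all $a$ and all bisection-supported $f$) corresponds exactly to the equivariance $\varphi_x(u_g)=\varphi_{r(h)}(u_{hgh^{-1}})$ holding for $\mu$-a.e.\ $x$, all $g\in\mathcal G_x^x$ and all $h\in\mathcal G_x$. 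Applying this equivariance with $h$ ranging over the isotropy group $\mathcal G_x^x\subseteq\mathcal G_x$ shows that for $\mu$-a.e.\ $x$ the state $\varphi_x$ is invariant under conjugation by $\mathcal G_x^x$, hence (since $\varphi_x(u_{g'g})=\varphi_x(u_{g(g'g)g^{-1}})=\varphi_x(u_{gg'})$ for $g,g'\in\mathcal G_x^x$) a tracial state on $C^*(\mathcal G_x^x)$. As a field that is tracial $\mu$-a.e.\ has a representative that is tracial everywhere, these are precisely the fields in (3), and the traciality demanded there is in fact automatic. Composing with the previous paragraph yields all three bijections together with the displayed formulas.

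The main obstacle is to get all the identifications exactly right: the Renault-type computation behind $\mu\in T_\beta(A)$ must be matched carefully with the convention of \cite{N} for the Radon--Nikodym cocycle, and the translation between condition c) of \reflemma{thm67} and the intertwining property in (2) rests on the (easy but fiddly) fact that $\gamma_{f^*}$ is implemented on $L^\infty(\mathcal G^{(0)},\mu)$ by $m\mapsto f^*mf$. The remaining ingredients --- the density of $\mathcal F$ in $C_c(\mathcal G)$, the $b\leftrightarrow b^*$ bookkeeping in applying \reflemma{thm67}, and the passage from equivariance to traciality --- are routine once the framework is in place.
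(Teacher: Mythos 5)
Your proposal is correct and follows essentially the same route as the paper: specialize Theorem \ref{07-06-20d} (via Lemma \ref{thm67}) to $B=C^{*}(\mathcal{G})$, $A=C_{0}(\mathcal{G}^{(0)})$, $\alpha=\sigma^{c}$ with the bisection-supported normalizers, and combine this with Lemma \ref{prop1} and Lemma \ref{08-06-20}. The points you spell out --- that $\mu\in T_{\beta}(C_{0}(\mathcal{G}^{(0)}))$ via the canonical KMS state obtained by composing $\mu$ with restriction to the unit space, that $\gamma_{f^{*}}$ acts on $L^{\infty}(\mathcal{G}^{(0)},\mu)$ as $m\mapsto f^{*}mf$, and that the equivariance condition makes the fields automatically tracial ($\mu$-a.e.) --- are exactly the details the paper leaves implicit, so there is no gap.
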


\section{Crossed products}
Let $A$ be a separable unital $C^*$-algebra and $G$ a countable discrete group. Let $\gamma : G \to \Aut(A)$ be an action of $G$ on $A$ by automorphisms. We can then consider both the corresponding reduced and full crossed product $C^*$-algebra. 
Both $C^*$-algebras are generated by a copy of $A$ and the elements $u_g,\ g \in G$, of a unitary representation $u$ of $G$ with the property that $u_gau_g^* = \gamma_g(a)$ for all $a \in A$ and all $g \in G$. It will not be necessary to distinguish between the reduced and the full crossed product and we let $A \rtimes_{\gamma} G$ denote either.

Let $G \ni g \mapsto D_g$ be a map (or cocycle) from $G$ into the set of self-adjoint elements in the center of $A$ such that
\begin{equation}\label{05-05-20}
\gamma_{h^{-1}}(D_g) \ + \ D_h \ = \ D_{gh} \ \ \ \forall g,h \in G\ .
\end{equation}
Such a cocycle $D$ defines a flow $\alpha^D$ on $A\rtimes_{\gamma} G$ determined by the condition that the formula
\begin{equation}\label{12-06-20b}
\alpha^D_t(au_g) = au_ge^{-it D_g} 
\end{equation}
holds for all $a\in A, \ g \in G$ and $t \in \mathbb R$. It is easy to see that $A$ is an $\alpha^D$-regular subalgebra of $A \rtimes_\gamma G$, and we can therefore apply the results of Section \ref{Sec2}.

Let $\beta \in \mathbb R$. Since the elements $a \in A$ and $u_g, g \in G$, span a dense $\alpha^{D}$-invariant $*$-subalgebra of $\alpha^D$-analytic elements it follows from Definition 5.3.1 in \cite{BR} that a state $\omega$ on $A \rtimes_{\gamma} G$ is a $\beta$-KMS state for $\alpha^D$ iff
\begin{equation}\label{15-05-20a}
\omega(au_gbu_h) = \omega\left(bu_h au_g e^{\beta D_g}\right) \ 
\end{equation}
for all $a,b \in A$ and all $g,h \in G$. Let $P : A \times_{\gamma} G \to A$ be the canonical conditional expectation.

\begin{lemma}\label{05-05-20b} Let $\beta \in \mathbb R$. 
\begin{enumerate}
\item[a)] Let $\omega$ be a $\beta$-KMS state for $\alpha^D$. Then 
\begin{itemize} 
\item $\omega|_A$ is a trace state on $A$, and
\item $\omega(\gamma_g(a)) \ = \ \omega\left(a e^{\beta D_g}\right) \ \ \ \forall a \in A \ \forall g \in G$.
\end{itemize}
\item[b)] Let $\tau$ be a trace state on $A$. Then $\tau \circ P$ is a $\beta$-KMS state for $\alpha^D$ iff  $\tau(\gamma_g(a)) \ = \ \tau\left(a e^{\beta D_g}\right) \  \forall a \in A \ \forall g \in G$. 
\end{enumerate} 
\end{lemma}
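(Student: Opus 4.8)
The plan is to read both statements off the KMS characterization \eqref{15-05-20a} directly, by choosing the parameters $a,b\in A$ and $g,h\in G$ suitably and using only the relations $u_gc=\gamma_g(c)u_g$, $u_gu_h=u_{gh}$, $u_e=1$, the cocycle identity \eqref{05-05-20}, the fact that $e^{\beta D_g}$ is a central element of $A$ (since $D_g$ is self-adjoint and lies in the center, a $C^*$-subalgebra), and, once it is available, the trace property.

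For part a), I would first note that \eqref{05-05-20} with $g=h=e$ gives $D_e+D_e=D_e$, hence $D_e=0$; then \eqref{15-05-20a} with $g=h=e$ reads $\omega(ab)=\omega(ba)$ for all $a,b\in A$, so $\omega|_A$ is a trace state (a state because the unit of $A\rtimes_\gamma G$ lies in $A$). For the second bullet, I write $\gamma_g(a)=u_gau_g^*=u_gau_{g^{-1}}$ and apply \eqref{15-05-20a} to the element $(1)\,u_g\,(a)\,u_{g^{-1}}$, i.e. with the roles of $a,b,g,h$ played by $1,a,g,g^{-1}$: the left-hand side becomes $\omega(\gamma_g(a))$, and on the right-hand side $u_{g^{-1}}u_g=1$ leaves precisely $\omega(ae^{\beta D_g})$.

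For part b), note first that $\tau\circ P$ is automatically a state on $A\rtimes_\gamma G$ since $P$ is unital and positive. The forward implication is immediate from a): if $\tau\circ P$ is a $\beta$-KMS state then $(\tau\circ P)(\gamma_g(a))=(\tau\circ P)(ae^{\beta D_g})$, and since $\gamma_g(a)$ and $ae^{\beta D_g}$ both lie in $A$, on which $\tau\circ P$ agrees with $\tau$, this is the desired identity. For the converse I would verify \eqref{15-05-20a} for $\tau\circ P$ by hand. Using $au_gbu_h=a\gamma_g(b)u_{gh}$ and $P(cu_k)=\delta_{k,e}c$, the left-hand side equals $\delta_{gh,e}\,\tau(a\gamma_g(b))$; on the right-hand side, $bu_hau_ge^{\beta D_g}=b\gamma_h(a)u_{hg}e^{\beta D_g}=b\gamma_h(a)\gamma_{hg}(e^{\beta D_g})u_{hg}$, so applying $P$ and $\tau$, and using $\gamma_{hg}=\id$ once $hg=e$, gives $\delta_{hg,e}\,\tau(b\gamma_{g^{-1}}(a)e^{\beta D_g})$. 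Since $gh=e$, $hg=e$ and $h=g^{-1}$ are equivalent in a group, it remains to show that the standing hypothesis forces $\tau(a\gamma_g(b))=\tau(b\gamma_{g^{-1}}(a)e^{\beta D_g})$: apply the hypothesis to $c:=b\gamma_{g^{-1}}(a)$, for which $\gamma_g(c)=\gamma_g(b)a$, to get $\tau(\gamma_g(b)a)=\tau(b\gamma_{g^{-1}}(a)e^{\beta D_g})$, and combine with $\tau(\gamma_g(b)a)=\tau(a\gamma_g(b))$ from the trace property.

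I do not expect a genuine obstacle here; the lemma is essentially bookkeeping with the commutation relations. The two points that need care are keeping straight whether it is $gh$ or $hg$ that has to be trivial on the two sides of \eqref{15-05-20a} (and exploiting that these conditions agree for group elements), and not simplifying $\gamma_{hg}(e^{\beta D_g})$ to $e^{\beta D_g}$ before the Kronecker delta has forced $hg=e$.
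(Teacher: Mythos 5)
Your proof is correct and follows essentially the same route as the paper: part a) by specializing \eqref{15-05-20a} to $g=h=e$ and to $a=1$, $h=g^{-1}$, and part b) by computing $\tau\circ P$ on both sides of \eqref{15-05-20a} and reducing the nontrivial case $h=g^{-1}$ to the identity $\tau\left(b\gamma_{g^{-1}}(a)e^{\beta D_g}\right)=\tau(\gamma_g(b)a)=\tau(a\gamma_g(b))$ via the hypothesis applied to $b\gamma_{g^{-1}}(a)$. No changes needed.
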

\begin{proof} a): Both items follow from \eqref{15-05-20a}; the first by taking $g = h = e$ (the neutral element in $G$) and the second by taking $a = 1$ and $h=g^{-1}$.

b): The necessity of the condition follows from a). To prove that it is also sufficient we calculate
$$
\tau \circ P(au_gbu_h) = \ \begin{cases} 0 \ , & \ g \neq h^{-1} \\ \tau\left(a\gamma_g(b)\right) \ , & \ g = h^{-1} \  \end{cases}
$$
and
$$
\tau \circ P\left(bu_hau_ge^{\beta D_g}\right) = \ \begin{cases} 0 \ , & \ g \neq h^{-1} \\ \tau\left(b\gamma_g^{-1}(a)e^{\beta D_g}\right) \ , & \ g = h^{-1} \ , \end{cases}
$$
which is the same because the assumed properties of $\tau$ imply that
$$
\tau\left(b\gamma_g^{-1}(a)e^{\beta D_g}\right) = \tau(\gamma_g(b)a) = \tau(a \gamma_g(b)) \ .
$$
\end{proof}

In the terminology of Section \ref{Sec2} we see that $\tau \in T(A)$ is $(\alpha^D,\beta)$-conformal if and only if it satisfies the condition in b) of Lemma \ref{05-05-20b}. It is therefore consistent that we denote by $T_\beta(A)$ the set of trace states on $A$ with the additional property specified in b) of Lemma \ref{05-05-20b} and let $S_{\beta}(A\rtimes_{\gamma} G)$ denote the set of $\beta$-KMS states for $\alpha^D$. Both are metrizable weak* compact convex sets; $S_{\beta}(A\rtimes_{\gamma} G)$ is even a Choquet simplex, cf. Theorem 5.3.30 in \cite{BR}. Lemma \ref{05-05-20b} has the following immediate corollary.

\begin{cor}\label{05-05-20c} The restriction map $\omega \mapsto \omega|_A$ is a continuous affine surjection of $S_{\beta}(A\rtimes_{\gamma} G)$ onto $T_\beta(A)$.
\end{cor}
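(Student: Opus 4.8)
The plan is to deduce Corollary~\ref{05-05-20c} directly from Lemma~\ref{05-05-20b}, with almost no extra work. First I would establish continuity: the restriction map $\omega \mapsto \omega|_A$ is weak*-to-weak* continuous on \emph{any} state space, since for each fixed $a \in A$ the evaluation $\omega \mapsto \omega(a)$ is weak*-continuous, and this is exactly what weak* convergence of the restrictions tests. Next I would note affineness, which is immediate because restriction of functionals is a linear operation and convex combinations of states restrict to the corresponding convex combinations.

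The substantive points are that the map lands in $T_\beta(A)$ and that it is surjective onto $T_\beta(A)$. For the first, part a) of Lemma~\ref{05-05-20b} says precisely that if $\omega \in S_\beta(A\rtimes_\gamma G)$ then $\omega|_A$ is a trace state satisfying $\tau(\gamma_g(a)) = \tau(a e^{\beta D_g})$ for all $a,g$, and by the remark following that lemma this is exactly the defining condition for $\tau \in T_\beta(A)$; hence the range is contained in $T_\beta(A)$. For surjectivity, given any $\tau \in T_\beta(A)$, part b) of Lemma~\ref{05-05-20b} tells us that $\tau \circ P$ is a $\beta$-KMS state for $\alpha^D$, i.e.\ $\tau \circ P \in S_\beta(A\rtimes_\gamma G)$, and since $P|_A = \id_A$ we have $(\tau \circ P)|_A = \tau$. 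Thus every element of $T_\beta(A)$ is hit, and $\omega \mapsto \omega|_A$ is onto $T_\beta(A)$.

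Putting these together gives the corollary. I do not expect any genuine obstacle here: the corollary is a formal repackaging of Lemma~\ref{05-05-20b}, with the conditional expectation $P$ supplying an explicit (affine, continuous) section of the restriction map that witnesses surjectivity. The only thing to be mildly careful about is making sure the condition in b) of Lemma~\ref{05-05-20b} is verbatim the membership condition for $T_\beta(A)$ as defined in the paragraph preceding the corollary — but the excerpt explicitly records that identification, so nothing is left to check.
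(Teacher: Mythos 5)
Your proof is correct and is exactly the argument the paper intends: the corollary is stated as an immediate consequence of Lemma~\ref{05-05-20b}, with part a) giving that restrictions of $\beta$-KMS states land in $T_\beta(A)$ and part b) giving surjectivity via the section $\tau \mapsto \tau \circ P$, while continuity and affineness of restriction are automatic. Nothing is missing.
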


We will mainly be concerned with the case where $G$ is abelian but before making that assumption we collect in this section some general conclusions.

\begin{lemma}\label{15-05-20c} Let $\tau \in T_\beta(A)$. There is a unitary representation $W$ of $G$ on $H_{\tau}$ such that $\Ad W_g \circ \pi_{\tau} = \pi_{\tau} \circ \gamma_g$.
\end{lemma}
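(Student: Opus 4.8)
The plan is to exhibit the implementing unitaries explicitly. Since each $D_g$ is a self-adjoint element of the center of $A$ and $A$ is unital, $e^{sD_g}$ is a positive invertible central element of $A$ for every $s \in \mathbb R$; this continuous functional calculus is all we shall need. For $g \in G$ I would first define $W_g$ on the dense subspace $\pi_\tau(A)\Omega_\tau \subseteq H_\tau$ by
$$
W_g\bigl(\pi_\tau(a)\Omega_\tau\bigr)\ =\ \pi_\tau\!\bigl(\gamma_g(a\, e^{-\frac{\beta}{2}D_g})\bigr)\Omega_\tau\ ,\qquad a\in A\ .
$$
The choice of the twist $e^{-\frac{\beta}{2}D_g}$ is forced by the requirement that $W_g$ be isometric: for $a,b\in A$ one computes, using the centrality of $e^{-\frac{\beta}{2}D_g}$ and the $(\alpha^D,\beta)$-conformality relation $\tau(\gamma_g(c))=\tau(c\,e^{\beta D_g})$ from Lemma~\ref{05-05-20b}(b),
$$
\bigl\langle W_g\pi_\tau(a)\Omega_\tau,\, W_g\pi_\tau(b)\Omega_\tau\bigr\rangle=\tau\!\bigl(\gamma_g(a^* e^{-\beta D_g}b)\bigr)=\tau\!\bigl(a^* e^{-\beta D_g}b\,e^{\beta D_g}\bigr)=\tau(a^*b)\ .
$$
Hence $W_g$ is well defined and extends to an isometry of $H_\tau$, still written $W_g$. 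Its range contains the dense set $\pi_\tau(A)\Omega_\tau$, since $W_g$ sends $\pi_\tau\bigl(\gamma_{g^{-1}}(c)\,e^{\frac{\beta}{2}D_g}\bigr)\Omega_\tau$ to $\pi_\tau(c)\Omega_\tau$; being an isometry with dense range it is unitary.

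Next I would check the two remaining properties on the dense subspace $\pi_\tau(A)\Omega_\tau$, which suffices by continuity. The identity $W_g\pi_\tau(a)=\pi_\tau(\gamma_g(a))\,W_g$ is immediate, since both sides send $\pi_\tau(b)\Omega_\tau$ to $\pi_\tau\!\bigl(\gamma_g(a)\gamma_g(b\,e^{-\frac{\beta}{2}D_g})\bigr)\Omega_\tau$; this is exactly $\Ad W_g\circ\pi_\tau=\pi_\tau\circ\gamma_g$. For multiplicativity, note first that $D_e=0$ (put $g=h=e$ in \eqref{05-05-20}), so $W_e=1$, and then a short computation on $\pi_\tau(A)\Omega_\tau$ reduces $W_gW_h=W_{gh}$ to the single identity
$$
\gamma_g(D_g)+\gamma_{gh}(D_h)=\gamma_{gh}(D_{gh})
$$
among central self-adjoint elements of $A$; this in turn follows by applying the automorphism $\gamma_{gh}$ to the cocycle relation $\gamma_{h^{-1}}(D_g)+D_h=D_{gh}$ and using $\gamma_{gh}\circ\gamma_{h^{-1}}=\gamma_g$. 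As $G$ is discrete, no continuity of $g\mapsto W_g$ is needed.

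Everything here is a routine GNS-level manipulation; the only spot that needs care is the bookkeeping with the cocycle $D$ and the repeated use of the centrality of its values — in particular getting the exponents $\pm\frac{\beta}{2}D_g$ and the conjugations by $\gamma_g$, $\gamma_h$ to line up in the verification of $W_gW_h=W_{gh}$. One could instead argue more abstractly, observing that $\tau\circ\gamma_g$ is the trace $c\mapsto\tau(c\,e^{\beta D_g})$ and appealing to uniqueness of the GNS construction, but writing $W_g$ down explicitly seems the most transparent route and makes the homomorphism check unambiguous.
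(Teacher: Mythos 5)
Your proof is correct and follows essentially the same route as the paper: you define the same unitaries on the GNS space (your $\pi_\tau(\gamma_g(a\,e^{-\frac{\beta}{2}D_g}))\Omega_\tau$ equals the paper's $\pi_\tau(\gamma_g(a)e^{\frac{\beta}{2}D_{g^{-1}}})\Omega_\tau$ since the cocycle relation gives $\gamma_g(D_g)=-D_{g^{-1}}$), verify isometry via the conformality condition, and deduce the representation and covariance properties from \eqref{05-05-20}. The only difference is that you write out the multiplicativity and surjectivity checks which the paper declares straightforward.
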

\begin{proof} Since 
$$
\tau\left(\left(\gamma_g(a)e^{\frac{\beta}{2} D_{g^{-1}}}\right)^*\left( \gamma_g(a)e^{\frac{\beta}{2} D_{g^{-1}}}\right)\right) = \tau\left(\gamma_g(a^*a)e^{\beta D_{g^{-1}}}\right) =\tau(a^*a)$$
we can define a unitary $W_g$ on $H_{\tau}$ such that 
$$
W_g\pi_{\tau}(a)\Omega_{\tau} = \pi_{\tau}\left(\gamma_g(a)e^{\frac{\beta}{2} D_{g^{-1}}}\right)\Omega_{\tau} \ .
$$
It follows from \eqref{05-05-20} in a straightforward way that $W$ is a representation of $G$ with the stated property.
\end{proof}

In the following we denote by $\overline{\gamma}$ the action $\overline{\gamma}_g = \Ad W_g$ of $G$ by automorphisms of $\pi_{\tau}(A)''$. Notice that $\overline{\gamma}_g$ corresponds to $\gamma_{u_{g}}$ for $g\in G$ in the terminology of Section \ref{Sec2}. It is straightforward to obtain the following from Theorem \ref{thm67}; it generalizes a part of Theorem 1.2 in \cite{U}.
\begin{thm}\label{08-06-20b} Let $\tau \in T_\beta({A})$. The map
$$
F \mapsto \overline{\tau}\circ F
$$
is a bijection between 
$$
\left\{ \omega \in S_{\beta}(A\rtimes_{\gamma} G): \ \omega|_A = \tau \right\}
$$ 
and the set of unital completely positive maps $F : A \rtimes_\gamma G  \to \pi_\tau(A)''$ with the properties that
\begin{itemize}
\item $F|_A = \pi_\tau$, and
\item $F(u_gxu_g^*) = \overline{\gamma}_g(F(x))$ for all $g \in G$ and all $x \in A \rtimes_\gamma G$.
\end{itemize} 
\end{thm}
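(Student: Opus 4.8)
The plan is to derive Theorem \ref{08-06-20b} directly from Theorem \ref{07-06-20d} by verifying that the two descriptions of the relevant maps $F : A \rtimes_\gamma G \to \pi_\tau(A)''$ coincide in this crossed-product setting. First I would record that $A$ is $\alpha^D$-regular, which was already noted after \eqref{12-06-20b}: the elements $u_g$, $g \in G$, belong to $N_{\alpha^D}(A)$ (indeed $u_g a u_g^* = \gamma_g(a) \in A$ and $\alpha^D_t(u_g) = u_g e^{-itD_g}$ with $e^{-itD_g}$ central in $A$, so $\alpha^D_t(u_g) A u_g^* \subseteq A$), and the set $\mathcal F = \{u_g : g \in G\}$ spans together with $A$ — equivalently, products $au_g$ span — a dense $\alpha^D$-invariant $*$-subalgebra of $A \rtimes_\gamma G$ consisting of $\alpha^D$-analytic elements. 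So condition c) of Lemma \ref{thm67} can be tested on $\mathcal F$ alone.

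The next step is to match the cocycle map $\gamma_b$ of Lemma \ref{07-06-20} with $\overline\gamma_g$ when $b = u_g$. By definition $\gamma_{u_g}(\pi_\tau(a)) = \pi_\tau(u_g a u_g^*) = \pi_\tau(\gamma_g(a))$ for $a \in A$, and Lemma \ref{15-05-20c} gives a unitary $W_g$ on $H_\tau$ with $\Ad W_g \circ \pi_\tau = \pi_\tau \circ \gamma_g$; since both $\gamma_{u_g}$ and $\overline\gamma_g = \Ad W_g$ are normal (strong-operator continuous) and agree on the strongly dense subalgebra $\pi_\tau(A)$, they agree on all of $\pi_\tau(A)''$. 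Thus condition c) of Lemma \ref{thm67} for $\mathcal F$ reads precisely $F(u_g x u_g^*) = \overline\gamma_g(F(x))$ for all $g \in G$ and all $x \in A \rtimes_\gamma G$, which is the second bullet in the statement. Condition b) is literally $F|_A = \pi_\tau$. For condition a), a completely positive contraction $F$ with $F|_A = \pi_\tau$ is automatically unital, since $A$ is unital and $1 \in A$; conversely a unital completely positive map is a contraction. Hence the set of maps described in Theorem \ref{08-06-20b} is exactly the set of maps satisfying a), b), c) of Lemma \ref{thm67}, and Theorem \ref{07-06-20d} yields the asserted bijection $F \mapsto \overline\tau \circ F$ onto $\{\omega \in S_\beta(A \rtimes_\gamma G) : \omega|_A = \tau\}$.

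The only genuinely non-routine point is the identification $\gamma_{u_g} = \overline\gamma_g$ on the von Neumann algebra level — i.e., checking that the semigroup representation $b \mapsto \gamma_b$ from Lemma \ref{07-06-20}, which is defined abstractly via the GNS construction of an arbitrary state $\omega$ extending $\tau$ and the isomorphism $\iota_\omega$ of Lemma \ref{lemma62}, in fact does not depend on the choice of $\omega$ and is implemented by the specific unitaries $W_g$ built in Lemma \ref{15-05-20c}. This follows from the density/normality argument above once one observes that $\gamma_{u_g}$ restricted to $\pi_\tau(A)$ is visibly $\omega$-independent (it equals $\pi_\tau \circ \gamma_g$), and a normal map on $\pi_\tau(A)''$ is determined by its restriction to $\pi_\tau(A)$. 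I would also remark, as the excerpt does in the line following Theorem \ref{07-06-20d}, that the relation $F(u_g x u_g^*) = \overline\gamma_g(F(x))$ then automatically extends from $\mathcal F$ to all of $N_{\alpha^D}(A)$ — but this is not needed for the statement as phrased. Everything else is bookkeeping, and I would leave the straightforward verifications to the reader.
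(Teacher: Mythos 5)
Your overall route is the paper's own: Theorem \ref{08-06-20b} is obtained by translating its two bullet conditions into conditions a), b), c) of Lemma \ref{thm67} and then invoking Theorem \ref{07-06-20d} (together with Lemma \ref{lem17} for surjectivity), and the point you isolate as non-routine --- that $\gamma_{u_g}$ from Lemma \ref{07-06-20} coincides with $\overline{\gamma}_g=\Ad W_g$ from Lemma \ref{15-05-20c}, because both are normal and agree on the weakly dense subalgebra $\pi_\tau(A)$ --- is indeed the key identification, and your argument for it is fine, as is the observation that for maps with $F|_A=\pi_\tau$ unitality and contractivity are interchangeable.

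However, one step as you wrote it does not go through: you cannot take $\mathcal F=\{u_g: g\in G\}$ in condition c) of Lemma \ref{thm67}. That condition requires $\mathcal F$ itself to span a dense $\alpha^D$-invariant $*$-subalgebra of $A\rtimes_\gamma G$, and $\operatorname{span}\{u_g : g\in G\}$ is never dense when $A\neq \mathbb C 1$ (its closure meets $A$ only in the scalars) and need not be $\alpha^D$-invariant when $D\neq 0$, since $\alpha^D_t(u_g)=u_ge^{-itD_g}$ by \eqref{12-06-20b}. So the assertion that ``condition c) for $\mathcal F$ reads precisely $F(u_gxu_g^*)=\overline{\gamma}_g(F(x))$'' conflates the second bullet of Theorem \ref{08-06-20b} with an instance of c) that is not available, and both inclusions between the two sets of maps inherit this gap. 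The repair is short but is a genuinely missing step: take $\mathcal F=\{au_g : a\in A,\ g\in G\}$, whose span is dense, $*$-closed, $\alpha^D$-invariant and consists of analytic normalizers. Given $F$ as in Theorem \ref{08-06-20b}, condition b) places $A$ in the multiplicative domain of the u.c.p.\ map $F$, so $F(aya^*)=\pi_\tau(a)F(y)\pi_\tau(a)^*$; combined with the $u_g$-covariance and the semigroup property $\gamma_{au_g}=\gamma_a\gamma_{u_g}=\Ad\pi_\tau(a)\circ\overline{\gamma}_g$ from Lemma \ref{07-06-20}, this yields $F(au_gx(au_g)^*)=\gamma_{au_g}(F(x))$, i.e.\ condition c) for this $\mathcal F$, and Theorem \ref{07-06-20d} applies. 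For the reverse inclusion (equivalently, surjectivity onto $\{\omega\in S_\beta(A\rtimes_\gamma G): \omega|_A=\tau\}$) you do need Lemma \ref{lem17} d), or the remark following Theorem \ref{07-06-20d}, specialized to $b=u_g$: that is exactly what gives the second bullet for $F_\omega$, so it is not accurate to dismiss that extension as ``not needed for the statement as phrased.''
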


It is possible, exactly as in \cite{U}, to consider instead of $F$ itself only the elements $x_g = F(u_g)$. We shall not pursue this observation here, except when $G$ is abelian where it is most fruitful. See Lemma \ref{18-05-20}.

Let $\partial_e S_{\beta}(A \rtimes_{\gamma} G)$ and $\partial_e T_\beta(A)$ denote the sets of extremal elements in $S_{\beta}(A \rtimes_\gamma G)$ and $T_\beta(A)$, respectively.
The following generalizes Lemma 4.1 in \cite{Th}.

\begin{lemma}\label{13-05-20} Let $\omega \in \partial_e S_{\beta}(A \rtimes_{\gamma} G)$. Then 
\begin{itemize}
\item $\omega|_A \in \partial_e T_\beta(A)$.
\item For every $\tau \in \partial_e T_\beta(A)$ there is an $\omega \in \partial_e S_\beta(A \rtimes_\gamma G)$ such that $\omega|_A = \tau$.
\end{itemize}
\end{lemma}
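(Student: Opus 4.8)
The plan is to use the correspondence between $\beta$-KMS states and the completely positive maps $F$ from Theorem~\ref{08-06-20b}, together with the standard fact that the restriction map on Choquet simplices behaves well with respect to barycentric decompositions. For the first item, suppose $\omega \in \partial_e S_\beta(A \rtimes_\gamma G)$ but that $\tau := \omega|_A$ decomposes nontrivially as $\tau = \tfrac12(\tau_1 + \tau_2)$ with $\tau_1, \tau_2 \in T_\beta(A)$ distinct. The idea is to pull this decomposition back through $F$: writing $F$ as the map associated to $\omega$ via Theorem~\ref{08-06-20b}, one wants to produce maps $F_1, F_2$ associated to $\tau_1, \tau_2$ whose average recovers $\omega$. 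The cleanest route is to identify $\pi_\tau(A)''$ via the central decomposition corresponding to $\tau = \tfrac12(\tau_1 + \tau_2)$: there is a central projection $z \in \pi_\tau(A)''$ with $\overline{\tau}(z m) = \tfrac12\overline{\tau_1}(\iota_1(m))$ (after identifying $\pi_\tau(A)'' z \cong \pi_{\tau_1}(A)''$, and similarly $1-z$ with $\tau_2$), compatible with the $\overline{\gamma}$-action since $z$ is $\overline\gamma$-invariant (as $\tau_1,\tau_2$ are themselves $(\alpha^D,\beta)$-conformal — this needs checking, but follows because the defining identity of $T_\beta(A)$ is linear in $\tau$, so each summand of an extremal decomposition of a conformal trace is again conformal). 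Then $F_1(x) := \iota_1^{-1}(z F(x))$, $F_2(x) := \iota_2^{-1}((1-z)F(x))$ satisfy conditions of Theorem~\ref{08-06-20b} for $\tau_1, \tau_2$ respectively, and $\overline\tau \circ F = \tfrac12(\overline{\tau_1} \circ F_1 + \overline{\tau_2} \circ F_2)$, i.e. $\omega = \tfrac12(\omega_1 + \omega_2)$ with $\omega_i|_A = \tau_i$ distinct, contradicting extremality of $\omega$.

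For the second item, fix $\tau \in \partial_e T_\beta(A)$. By Corollary~\ref{05-05-20c} the fibre $S_\beta^\tau := \{\omega \in S_\beta(A\rtimes_\gamma G) : \omega|_A = \tau\}$ is nonempty; it is a weak* compact convex set, and its extreme points are extreme points of $S_\beta(A\rtimes_\gamma G)$. The reason is the converse direction of the argument above: if $\omega \in \partial_e S_\beta^\tau$ were to decompose as $\omega = \tfrac12(\omega_1 + \omega_2)$ in $S_\beta(A\rtimes_\gamma G)$ with $\omega_1 \neq \omega_2$, then restricting to $A$ gives $\tau = \tfrac12(\omega_1|_A + \omega_2|_A)$, and extremality of $\tau$ forces $\omega_1|_A = \omega_2|_A = \tau$, so $\omega_1, \omega_2 \in S_\beta^\tau$, contradicting extremality of $\omega$ within $S_\beta^\tau$. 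Since $S_\beta^\tau$ is a nonempty compact convex set it has an extreme point by Krein--Milman, and that point is the desired $\omega \in \partial_e S_\beta(A\rtimes_\gamma G)$ with $\omega|_A = \tau$.

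The main obstacle is the first item: one must be careful that the central projection $z$ implementing the decomposition $\tau = \tfrac12(\tau_1+\tau_2)$ inside $\pi_\tau(A)''$ is genuinely $\overline\gamma$-invariant and that the identifications $\iota_i : \pi_\tau(A)'' z \to \pi_{\tau_i}(A)''$ intertwine the respective group actions $\overline\gamma^{(\tau)}$ and $\overline\gamma^{(\tau_i)}$ defined via Lemma~\ref{15-05-20c}. This follows because $W_g$ on $H_\tau$ is built canonically from the formula $W_g \pi_\tau(a)\Omega_\tau = \pi_\tau(\gamma_g(a)e^{\frac\beta2 D_{g^{-1}}})\Omega_\tau$, which decomposes along the direct sum $H_\tau = H_{\tau}z \oplus H_\tau(1-z)$ because $\gamma_g(a)e^{\frac\beta2 D_{g^{-1}}} \in A$ and the decomposition is central; hence $z$ commutes with each $W_g$. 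Once this compatibility is in hand, the verification that $F_1$ and $F_2$ are unital, restrict to $\pi_{\tau_i}$ on $A$, and intertwine the conjugation action with $\overline\gamma$ is routine, using that $z$ is central and $\overline\gamma$-fixed. A secondary but genuine point worth stating explicitly is that $T_\beta(A)$ is a face in $T(A)$ — equivalently, that a convex decomposition of a conformal trace inside $T(A)$ has conformal summands — which we get for free from the linearity of the condition in Lemma~\ref{05-05-20b}(b); this is what licenses speaking of $\partial_e T_\beta(A)$ in the first place.
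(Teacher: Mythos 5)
The second bullet of your proposal is correct and is essentially the paper's own argument: the fibre $\{\omega\in S_\beta(A\rtimes_\gamma G):\omega|_A=\tau\}$ is non-empty by Corollary \ref{05-05-20c}, it is a closed face because $\tau$ is extremal in $T_\beta(A)$ and restrictions of KMS states lie in $T_\beta(A)$, and Krein--Milman then produces the desired extreme point. The genuine problem is in the first bullet. Your contradiction hinges entirely on the central element $z$ representing $\tau_1$ inside $\pi_\tau(A)''$ being \emph{fixed} by $\overline{\gamma}$; without that, $F_1(x)=\iota_1^{-1}(zF(x))$ does not satisfy the equivariance condition and $\overline{\tau_1}\circ F_1$ is not known to be a $\beta$-KMS state. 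Neither justification you offer is valid. Centrality only tells you that $\Ad W_g$ maps the center of $\pi_\tau(A)''$ into itself, i.e.\ it \emph{permutes} central elements; ``the decomposition is central, hence $z$ commutes with each $W_g$'' is a non sequitur, and indeed for a decomposition of $\tau$ by non-conformal traces the corresponding central element is generally not invariant. The parenthetical appeal to linearity is beside the point: that $\tau_1,\tau_2\in T_\beta(A)$ is already part of the hypothesis (you test extremality inside $T_\beta(A)$), and what must be \emph{extracted} from their conformality, by an actual computation, is precisely the invariance of the Radon--Nikodym element. That computation is the substance of the paper's proof: there one represents $\tau_i$ by a positive $z_i\in\pi_\omega(A)'$ via Theorem 2.3.19 in \cite{BR}, deduces from Proposition 5.3.29 in \cite{BR} that $z_i$ is central in $\pi_\omega(A)''$, and then uses the KMS property of $\overline{\omega}$ together with the uniqueness of $z_i$ to prove $\pi_\omega(u_g)z_i\pi_\omega(u_g)^*=z_i$, so that $z_i$ lies in the center of the factor $\pi_\omega(A\rtimes_\gamma G)''$ and is a scalar. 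As written, your argument assumes this key fact instead of proving it.

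Two secondary points. The Radon--Nikodym element of $\tfrac12\tau_1$ with respect to $\overline{\tau}$ is in general a positive central operator, not a projection, so the identification $\pi_{\tau_1}(A)''\cong\pi_\tau(A)''z$ needs support projections; this is repairable but incorrect as stated. Finally, your closing claim that $T_\beta(A)$ is a face of $T(A)$ ``for free from linearity'' is false: already for $\beta=0$ and $A=C(X)$, an invariant measure supported on a period-two orbit is the average of two non-invariant point masses. Fortunately it is also unnecessary --- $\partial_e T_\beta(A)$ simply denotes the extreme points of the compact convex set $T_\beta(A)$, which is how both you and the paper use it.
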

\begin{proof} For the first item let $t \in ]0,1[, \ \tau_1,\tau_2 \in T_\beta(A)$ satisfy that $\omega|_A = t\tau_1 + (1-t)\tau_2$. To show that $\tau_1 = \tau_2 = \omega|_A$, let $(H_{\omega},\pi_{\omega}, \Omega_{\omega})$ be the GNS-representation of $\omega$. Set $H^0_{\omega} = \overline{\pi_{\omega}(A)\Omega_{\omega}}$. Then $(H^0_{\omega}, \pi_{\omega}|_A,\Omega_{\omega})$ is the GNS representation of $\omega|_A$, and we can identify $\pi_{\omega|_{A}}(A)''$ with $\pi_{\omega}(A)''$ by Lemma \ref{lemma62}. It follows then from Theorem 2.3.19 in \cite{BR} that there are positive operators $z_i \in \pi_{\omega}(A)'$ such that $\tau_i(\cdot) = \left<\Omega_{\omega}, z_i\pi_{\omega}(\cdot) \Omega_{\omega}\right>, \ i =1,2$. In particular, $\tau_i$ extends to a tracial state $\overline{\tau_i}$ on $\pi_{\omega}(A)''$ 
and Proposition 5.3.29 in \cite{BR} implies that $z_i $ is in the center of $\pi_{\omega}(A)''$. Now we use that $\overline{\omega}$ is a $\beta$-KMS state for the weak* continuous extension of $\alpha^D$ to $\pi_{\omega}(A \rtimes_{\gamma} G)''$, cf. Corollary 5.3.4 in [BR]. This yields
\begin{align*}
& \tau_i(\gamma_g(a)) = \tau_i(ae^{\beta D_g}) \ = \ 
 \left< \Omega_{\omega}, z_i \pi_{\omega}(a e^{\beta D_g})\Omega_{\omega}\right> \\ 
 & =  \left< \Omega_{\omega}, \pi_{\omega}(u_g) z_i \pi_{\omega}(a)\pi_{\omega}(u_g^*)\Omega_{\omega}\right>   =  \left< \Omega_{\omega}, \pi_{\omega}(u_g) z_i\pi_{\omega}(u_g^*) \pi_{\omega}(\gamma_g(a))\Omega_{\omega}\right> 
\end{align*}
for all $g\in G$ and $a\in A$. The uniqueness of $z_i$ now implies that $\pi_{\omega}(u_g) z_i\pi_{\omega}(u_g^*) = z_i$ for all $g \in G$. It follows that $z_i$ is actually in the center of $\pi_{\omega}(A \rtimes_{\gamma} G)''$, which is a factor since $\omega$ is extremal, cf. Theorem 5.3.30 in \cite{BR}. Thus the $z_i$'s are scalars, and we conclude that $\tau_1 = \tau_2 = \omega|_A$.

For the second item note that $\left\{ \omega \in S_{\beta}(A \rtimes_\gamma G) : \ \omega|_A = \tau \right\}$ is a closed face in $S_{\beta}(A \rtimes_\gamma G)$ since $\tau$ is extremal, and it is non-empty by Corollary \ref{05-05-20c}. Since it is a compact convex set it contains an extremal point by the Krein-Milman theorem and since it is a face such a point must be extremal in $S_\beta(A\rtimes_\gamma G)$ as well.

\end{proof}

\begin{lemma}\label{18-05-20a} Let $\tau \in \partial_e T_\beta(A)$. The action $\overline{\gamma}$ of $G$ on the center of $\pi_{\tau}(A)''$ is ergodic.
\end{lemma}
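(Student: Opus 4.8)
The plan is to show that every $\overline{\gamma}$-invariant projection in the center $Z:=Z(\pi_\tau(A)'')$ is trivial. Since each $\overline{\gamma}_g$ is $\sigma$-weakly continuous, the fixed-point set of the $\overline{\gamma}$-action on the abelian von Neumann algebra $Z$ is a von Neumann subalgebra of $Z$, hence generated by its projections; so once all invariant projections are seen to be $0$ or $1$, ergodicity follows.

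First I would assemble the two ingredients. Write $M:=\pi_\tau(A)''$ and recall that $\overline{\tau}$ is a \emph{faithful} normal tracial state on $M$ (as already used above, $\Omega_\tau$ is a cyclic tracial vector, hence separating), that $\Ad W_g$ maps $\pi_\tau(A)$ onto itself and therefore restricts to an automorphism $\overline{\gamma}_g$ of $M$ and of $Z$, and that $h_g:=\pi_\tau(e^{\beta D_g})$ lies in $Z$ because $D_g$ is central in $A$. The key preliminary observation I would establish is the twisted invariance
\[
\overline{\tau}\circ\overline{\gamma}_g \;=\; \overline{\tau}(\,\cdot\,h_g)\qquad\text{on all of }M .
\]
Indeed, the two sides are normal functionals on $M$, and on the $\sigma$-weakly dense subalgebra $\pi_\tau(A)$ they agree: $\overline{\tau}(\overline{\gamma}_g(\pi_\tau(a)))=\overline{\tau}(\pi_\tau(\gamma_g(a)))=\tau(\gamma_g(a))=\tau(ae^{\beta D_g})=\overline{\tau}(\pi_\tau(a)h_g)$, using $\pi_\tau\circ\gamma_g=\Ad W_g\circ\pi_\tau$ (Lemma \ref{15-05-20c}) and $\tau\in T_\beta(A)$ (Lemma \ref{05-05-20b}).

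Next, suppose $z\in Z$ is a projection with $\overline{\gamma}_g(z)=z$ for all $g$ and $z\neq 0,1$. Faithfulness of $\overline{\tau}$ gives $0<\overline{\tau}(z)<1$, so I can define states $\tau_1,\tau_2$ on $A$ by $\tau_1=\overline{\tau}(z)^{-1}\overline{\tau}(z\,\pi_\tau(\cdot))$ and $\tau_2=\overline{\tau}(1-z)^{-1}\overline{\tau}((1-z)\,\pi_\tau(\cdot))$. Since $z$ is central in $M$ and $\overline{\tau}$ is tracial, $\tau_1$ and $\tau_2$ are traces, and using $z\,\overline{\gamma}_g(\pi_\tau(a))=\overline{\gamma}_g(z\,\pi_\tau(a))$ together with the twisted invariance one computes $\overline{\tau}(z)\,\tau_1(\gamma_g(a))=\overline{\tau}(\overline{\gamma}_g(z\,\pi_\tau(a)))=\overline{\tau}(z\,\pi_\tau(ae^{\beta D_g}))=\overline{\tau}(z)\,\tau_1(ae^{\beta D_g})$, and similarly for $\tau_2$; hence $\tau_1,\tau_2\in T_\beta(A)$ by Lemma \ref{05-05-20b}. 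Then $\tau=\overline{\tau}(z)\tau_1+\overline{\tau}(1-z)\tau_2$ is a proper convex decomposition in $T_\beta(A)$, and $\tau_1\neq\tau_2$: if they agreed on $A$, the normal functionals $\overline{\tau}(z)^{-1}\overline{\tau}(z\,\cdot\,)$ and $\overline{\tau}(1-z)^{-1}\overline{\tau}((1-z)\,\cdot\,)$ on $M$ would agree on $\pi_\tau(A)$, hence on $M$, yet they take the values $1$ and $0$ at $z$. This contradicts $\tau\in\partial_e T_\beta(A)$, so no such $z$ exists.

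The only points requiring care are the promotion of the conformality identity on $A$ to the twisted invariance $\overline{\tau}\circ\overline{\gamma}_g=\overline{\tau}(\,\cdot\,h_g)$ on all of $M$ (normality plus $\sigma$-weak density of $\pi_\tau(A)$), and the verification that the cut-down states $\tau_i$ remain in $T_\beta(A)$; both hinge on $h_g$ and $z$ lying in the center. Beyond this bookkeeping I do not expect any genuine obstacle.
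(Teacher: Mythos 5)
Your proof is correct and follows essentially the same route as the paper: cut $\overline{\tau}$ down by a $\overline{\gamma}$-invariant central element, use the conformality of $\tau$ (via the twisted invariance $\overline{\tau}\circ\overline{\gamma}_g=\overline{\tau}(\,\cdot\,\pi_\tau(e^{\beta D_g}))$) to check the cut-down states lie in $T_\beta(A)$, and invoke extremality of $\tau$. The only differences are presentational — the paper works with a general fixed positive central element and concludes it is a scalar, while you reduce to projections and spell out the density/normality and $\tau_1\neq\tau_2$ steps explicitly.
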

\begin{proof}
Let $0 \leq z \leq 1$ be a central element of $\pi_{\tau}(A)''$ fixed by $\overline{\gamma}$ and assume $z \neq 0$. Define 
$\tau_z (a) = \left<\Omega_{\tau}, z\pi_{\tau}(a) \Omega_{\tau} \right> $ for $a\in A$. Then $\tau_z$ is clearly tracial and we find that
\begin{align*}
&\tau_z(\gamma_g(a)) = \left<\Omega_{\tau}, z\overline{\gamma}_g(\pi_{\tau}(a)) \Omega_{\tau} \right> \\
& = \overline{\tau}\left(\overline{\gamma}_g(z\pi_{\tau}(a))\right) = \overline{\tau}\left(z\pi_{\tau}(ae^{\beta D_g})\right) \ = \tau_z(ae^{\beta D_g}) \ .
\end{align*} 
It follows $\overline{\tau}(z)^{-1}\tau_z \in T_\beta(A)$. Since $\tau_z \leq  \tau$ and $\tau$ is extremal, it follows that $\tau_z$ must be a scalar multiple of $\tau$; i.e. there is a $\lambda \geq 0$ such that $ \left<\Omega_{\tau}, (z -\lambda)\pi_{\tau}(a) \Omega_{\tau} \right> = 0$ for all $a \in A$. This implies that $z = \lambda$.  
\end{proof}

\section{Crossed products by abelian groups}\label{secabel}

In this section we retain the setting of the previous section but assume throughout that $G$ is abelian.

\begin{lemma}\label{18-05-20}  Let $\tau \in \partial_e T_\beta(A)$ and $\omega \in \partial_e S_{\beta}(A \rtimes_{\gamma} G)$ such that $\omega|_A = \tau$.  Let $\Gamma(\tau)$ be the Connes spectrum of the action $\overline{\gamma}$ on $\pi_{\tau}(A)''$, cf. \cite{Pe}. It follows that there is a representation $w$ of $\Gamma(\tau)^\perp$ by unitaries in ${\pi_{\tau}(A)''}$ such that 
\begin{itemize}
\item[i)] $\overline{\gamma}_g(w_h) = w_h$,
\item[ii)] $\overline{\gamma}_h = \Ad w_h$, 
\item[iii)] $\omega(au_g)  = \begin{cases} 0 \ , & g \notin \Gamma(\tau)^{\perp} \\ \overline{\tau}\left(\pi_\tau(a)w_g\right), & \ g \in \Gamma(\tau)^{\perp} \  \end{cases} $
\end{itemize}
for all $h \in \Gamma(\tau)^\perp$, all $g \in G$ and all $a \in A$.
\end{lemma}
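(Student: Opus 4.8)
The plan is to combine the conditional-expectation description of the $\beta$-KMS states lying over $\tau$ (Theorem~\ref{08-06-20b}) with the Arveson--Borchers--Connes spectral analysis of the abelian action $\overline{\gamma}$ on $M:=\pi_{\tau}(A)''$, exploiting throughout that the GNS von Neumann algebra of $\omega$ is a factor.

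First I would record the data attached to $\omega$. Since $\omega|_A=\tau$, Lemma~\ref{lem17} (equivalently Theorem~\ref{08-06-20b}) gives the unital completely positive map $F_{\omega}:A\rtimes_{\gamma}G\to M$ with $F_{\omega}|_A=\pi_{\tau}$, $\overline{\tau}\circ F_{\omega}=\omega$ and $F_{\omega}(u_hxu_h^{*})=\overline{\gamma}_h(F_{\omega}(x))$ for all $h\in G$. Moreover $\mathcal{M}:=\pi_{\omega}(A\rtimes_{\gamma}G)''$ is a factor because $\omega$ is extremal (Theorem~5.3.30 in \cite{BR}); it is generated by $M=\pi_{\omega}(A)''$ together with the unitaries $v_g:=\pi_{\omega}(u_g)$, which satisfy $\Ad(v_g)|_M=\overline{\gamma}_g$; and $F_{\omega}=F'_{\omega}\circ\pi_{\omega}$, where $F'_{\omega}:\mathcal{M}\to M$ is the faithful normal conditional expectation of Lemma~\ref{lemma64} with $\overline{\tau}\circ F'_{\omega}=\overline{\omega}$. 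Set $x_g:=F_{\omega}(u_g)=F'_{\omega}(v_g)\in M$. Since $\pi_{\tau}(a)$ lies in the multiplicative domain of $F'_{\omega}$ one gets $\omega(au_g)=\overline{\omega}(\pi_{\omega}(a)v_g)=\overline{\tau}(\pi_{\tau}(a)x_g)$ for all $a\in A$ and $g\in G$, and applying $F_{\omega}(u_hxu_h^{*})=\overline{\gamma}_h(F_{\omega}(x))$ with $x=u_g$, together with $G$ abelian, gives $\overline{\gamma}_h(x_g)=x_g$; hence each $x_g$ lies in the fixed point algebra $M^{\overline{\gamma}}$.

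The second step is the spectral input, and this is where I expect the real difficulty to lie. Since $\tau$ is extremal, $\overline{\gamma}$ acts ergodically on $Z(M)$ (Lemma~\ref{18-05-20a}), so $\Gamma(\tau)=\Gamma(\overline{\gamma})$ is a closed subgroup of $\widehat{G}$ and $\Gamma(\tau)^{\perp}$ a closed subgroup of $G$. The crux, which I would extract from the theory of the Connes spectrum and its annihilator in \cite{Pe} (using the ergodicity of $\overline{\gamma}$ on $Z(M)$, the factoriality of $\mathcal{M}$, and that $\overline{\omega}$ is a KMS state for the extension of $\alpha^{D}$ to $\mathcal{M}$), is the twofold assertion: \emph{(a)} if $g\in\Gamma(\tau)^{\perp}$ then $v_g\in M$, so that $\overline{\gamma}_g=\Ad(v_g)$ is inner on $M$; and \emph{(b)} if $g\notin\Gamma(\tau)^{\perp}$ then $x_g=F'_{\omega}(v_g)=0$, i.e.\ $\omega(au_g)=0$ for all $a\in A$. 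Heuristically $\{g:x_g\neq0\}$ is the stabiliser of $\omega$ under the dual action of $\widehat{G}$ on $A\rtimes_{\gamma}G$ (which commutes with $\alpha^{D}$ and fixes $A$, hence permutes the $\beta$-KMS states over $\tau$), $\Gamma(\tau)$ is exactly the obstruction to moving $v_g$ inside $M$, and the ergodicity on $Z(M)$ is what reduces a potential measurable field of Connes spectra to the single group $\Gamma(\tau)$. Everything on either side of this step is essentially formal; one delicate point inside it is to pin down the implementing unitary up to a scalar, which the factoriality of $\mathcal{M}$ should settle (if $w^{0}\in M$ is any $\overline{\gamma}$-invariant unitary with $\Ad(w^{0})|_M=\overline{\gamma}_g$, then $v_g(w^{0})^{*}$ commutes with $M$ and, since $G$ is abelian and $w^{0}$ is $\overline{\gamma}$-invariant, with every $v_h$, hence is a scalar, so $v_g\in\mathbb{C}w^{0}\subseteq M$).

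Finally I would assemble the three conclusions. For $h\in\Gamma(\tau)^{\perp}$ put $w_h:=v_h$, which by (a) is a unitary in $M$. Then $\Ad(w_h)|_M=\Ad(v_h)|_M=\overline{\gamma}_h$, which is (ii); $\overline{\gamma}_g(w_h)=v_gv_hv_g^{*}=v_h=w_h$ for all $g\in G$, using that $G$ is abelian, which is (i); and $w_hw_{h'}=v_hv_{h'}=v_{hh'}=w_{hh'}$ for $h,h'\in\Gamma(\tau)^{\perp}$, so $h\mapsto w_h$ is a representation of $\Gamma(\tau)^{\perp}$ by unitaries of $\pi_{\tau}(A)''$. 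For (iii): if $g\in\Gamma(\tau)^{\perp}$ then $v_g=w_g\in M$, so $x_g=F'_{\omega}(v_g)=w_g$ and hence $\omega(au_g)=\overline{\tau}(\pi_{\tau}(a)w_g)$ by the first step, while if $g\notin\Gamma(\tau)^{\perp}$ then $x_g=0$ by (b) and thus $\omega(au_g)=\overline{\tau}(\pi_{\tau}(a)x_g)=0$.
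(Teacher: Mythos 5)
Your outline does track the paper's own proof: you set $x_g=F'_{\omega}(\pi_{\omega}(u_g))$ using the conditional expectation of Lemma \ref{lemma64}, note via d) of Lemma \ref{lem17} and commutativity of $G$ that each $x_g$ is $\overline{\gamma}$-fixed, and your parenthetical factoriality argument (that $\pi_\omega(u_g)(w^0)^*$ commutes with $\pi_\tau(A)''$ and with every $\pi_\omega(u_h)$, hence is a scalar in the factor $\pi_{\omega}(A\rtimes_{\gamma}G)''$) is exactly the step the paper uses; the final assembly of i)--iii) is also the same. However, the two assertions you label the crux --- (a) that $g\in\Gamma(\tau)^{\perp}$ forces $\overline{\gamma}_g$ to be implemented by a $\overline{\gamma}$-invariant unitary of $\pi_\tau(A)''$, and (b) that $g\notin\Gamma(\tau)^{\perp}$ forces $x_g=0$ --- constitute essentially the entire content of the lemma, and you do not prove them: you only say you would ``extract'' them from \cite{Pe}, and the remark that $\{g: x_g\neq 0\}$ is heuristically the stabiliser of $\omega$ under the dual action is not an argument (nor is it how either implication is actually obtained).

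Concretely, what is missing is the following chain. For (b): the multiplicative domain of $F'_\omega$ gives the covariance $x_g\pi_\tau(a)=\pi_\tau(\gamma_g(a))x_g$, a relation you never record; it makes $x_gx_g^*$ and $x_g^*x_g$ central in $\pi_\tau(A)''$, and since they are also $\overline{\gamma}$-fixed they are scalars by Lemma \ref{18-05-20a}, so a nonzero $x_g$ is a positive multiple of a $\overline{\gamma}$-fixed unitary $w_g$ with $\overline{\gamma}_g=\Ad w_g$. Theorem 8.9.4 in \cite{Pe} then places $g$ in the annihilator of the Borchers spectrum, and the identification of the Borchers spectrum with the Connes spectrum --- valid here because, again by Lemma \ref{18-05-20a}, every $\overline{\gamma}$-fixed projection has central support $1$ --- yields $g\in\Gamma(\tau)^{\perp}$. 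For (a): the same Theorem 8.9.4, combined with the fact that the Arveson spectrum $\Sp(\overline{\gamma})$ is a closed subset of the compact group $\widehat{G}$, produces for each $g\in\Gamma(\tau)^{\perp}$ an invariant unitary $w'_g\in\pi_\tau(A)''$ implementing $\overline{\gamma}_g$; only then does your factoriality step apply to give $\pi_\omega(u_g)\in\mathbb C w'_g$ and hence $x_g=\pi_\omega(u_g)$. Without deriving the zero-or-unitary dichotomy for $x_g$, invoking Theorem 8.9.4 in both directions, and making the Borchers--Connes identification, (a) and (b) remain unproved assertions, so the proposal has a genuine gap at its central step.
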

\begin{proof} Let $F'_{\omega}$ be the conditional expectation from Lemma \ref{lemma64} and set $x_g = F'_{\omega}\circ \pi_{\omega}(u_g)$. For $a\in A$ then
\begin{equation}\label{26-05-20a}
\begin{split}
&x_g\pi_\tau(a) = F'_{\omega}(\pi_\omega(u_g)\pi_{\omega}(a)) \\
&=  F'_{\omega}(\pi_{\omega}(\gamma_g(a))\pi_\omega(u_g)) = \pi_{\tau}(\gamma_g(a))x_g \ .
\end{split}
\end{equation}
It follows that $x_gx_g^*\pi_\tau(a) = \pi_\tau(a)x_gx_g^*$ and in the same way also that $x_g^*x_g\pi_\tau(a) = \pi_\tau(a)x_g^*x_g$ for all $a \in A$. Thus $x_gx_g^*$ and $x_g^*x_g$ are both central elements of $\pi_\tau(A)''$. Since $G$ is abelian it follows from d) of Lemma \ref{lem17} that they are also both fixed by $\overline{\gamma}$. By Lemma \ref{18-05-20a} they are both non-negative scalars and hence $x_g$ is either $0$ or has the form $x_g = \lambda_g w_g$ for some scalar $\lambda_g > 0$ and a unitary $w_g$. If the latter occurs it follows from property d) in Lemma \ref{lem17} that $w_g$ is fixed by $\overline{\gamma}$ and by \eqref{26-05-20a} that $\overline{\gamma}_g = \Ad w_g$. By  Theorem 8.9.4 in \cite{Pe} this implies that $g$ is in the annihilator of the Borchers spectrum of $\overline{\gamma}$, but the Borchers spectrum is the same as the Connes spectrum in this case since the central support of any projection fixed by $\overline{\gamma}$ is $1$ by Lemma \ref{18-05-20a}. Thus 
$$
x_g \neq 0 \ \Rightarrow \ g \in \Gamma(\tau)^{\perp} \ .
$$
On the other hand, since the Arveson spectrum $\Sp(\overline{\gamma})$ is a closed subset of $\widehat{G}$, which is compact, it follows also from Theorem 8.9.4 in \cite{Pe} that $\overline{\gamma}_g = \Ad w'_g$ for some unitary $w'_g$ in the fixed point algebra of $\overline{\gamma}$ in $\pi_{\tau}(A)''$ when $g \in \Gamma(\tau)^{\perp}$. Then $w'_g\pi_{\omega}(u_g)^*$ is in the commutant of $\pi_{\tau}(A)''$ in $\pi_{\omega}(A \rtimes_\gamma G)''$ and commutes with $\pi_{\omega}(u_h)$ for all $h \in G$. Hence $w'_g\pi_{\omega}(u_g)^*$ is in the center of $\pi_{\omega}(A \rtimes_\gamma G)''$ which is a factor since $\omega$ is extremal, cf. Theorem 5.3.30 in \cite{BR}. It follows that $w'_g = {\lambda_g} \pi_{\omega}(u_g)$ for some $\lambda_g \in \mathbb C, \ |\lambda_g|= 1$, and hence that $x_g = F_{\omega}'(\pi_{\omega}(u_g)) = \overline{\lambda_g} w'_g = \pi_{\omega}(u_g)$ when $g \in \Gamma(\tau)^{\perp}$. When we set $w_h = x_h, \ h \in 
\Gamma(\tau)^{\perp}$, it follows that $w$ is a representation of $\Gamma(\tau)^\perp$ by unitaries in ${\pi_{\tau}(A)''}$ such that i) and ii) hold. We check that iii) holds:
\begin{equation*}
\omega(au_g) =  \overline{\tau}\left(F_{\omega}(au_g)\right) = \overline{\tau}(\pi_\tau(a)x_g) =  \begin{cases} 0 \ , & g \notin \Gamma(\tau)^{\perp} \\ \overline{\tau}\left(\pi_\tau(a)w_g\right), & \ g \in \Gamma(\tau)^{\perp} \ .  \end{cases}  
\end{equation*}

\end{proof}

\begin{lemma}\label{27-05-20d} Let $\tau \in \partial_e T_\beta(A)$. There is a representation $w^{\tau}$ of $\Gamma(\tau)^\perp$ by unitaries in $\pi_{\tau}(A)''$ such that
\begin{itemize}
\item  $\overline{\gamma}_g(w^\tau_h) = w^\tau_h$ and
\item $\overline{\gamma}_h = \Ad w^\tau_h$ 
\end{itemize}
for all $g \in G$ and all $h \in \Gamma(\tau)^\perp$.
\end{lemma}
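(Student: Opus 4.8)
The plan is to obtain this lemma as an immediate consequence of \reflemma{18-05-20}, whose conclusion already contains exactly the two asserted properties; the only ingredient missing from the hypotheses of that lemma is a $\beta$-KMS state lying above $\tau$, and this is supplied by \reflemma{13-05-20}.

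Concretely, I would first invoke the second item of \reflemma{13-05-20}: since $\tau$ is extremal in $T_\beta(A)$, there exists $\omega \in \partial_e S_\beta(A \rtimes_\gamma G)$ with $\omega|_A = \tau$. With such an $\omega$ in hand, apply \reflemma{18-05-20} to the pair $(\tau,\omega)$ to produce a representation $w$ of $\Gamma(\tau)^\perp$ by unitaries in $\pi_\tau(A)''$ satisfying properties i) and ii) there, namely $\overline{\gamma}_g(w_h) = w_h$ for all $g \in G$ and all $h \in \Gamma(\tau)^\perp$, and $\overline{\gamma}_h = \Ad w_h$ for all $h \in \Gamma(\tau)^\perp$. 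Setting $w^{\tau} := w$ finishes the argument. The point of stating the lemma separately is that its conclusion retains only those data which do not refer to $\omega$ (so, not property iii) of \reflemma{18-05-20}), and these turn out to depend only on $\tau$.

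It is worth recording why the detour through a KMS state is the natural route, since that is the one point that is not a pure citation. A direct attempt would start from Theorem~8.9.4 in \cite{Pe}: combined with the ergodicity of $\overline{\gamma}$ on the center of $\pi_\tau(A)''$ (\reflemma{18-05-20a}), which forces the Borchers and Connes spectra to coincide, one gets for each $h \in \Gamma(\tau)^\perp$ a unitary $w_h$ in the fixed-point algebra of $\overline{\gamma}$ with $\overline{\gamma}_h = \Ad w_h$. The obstacle is that this choice need not be multiplicative: $w_h w_{h'}$ and $w_{hh'}$ both implement $\overline{\gamma}_{hh'}$ and are both $\overline{\gamma}$-fixed, hence differ by a central $\overline{\gamma}$-fixed unitary, which by ergodicity is a scalar; thus a priori one only obtains a projective representation, and trivializing the resulting scalar $2$-cocycle on $\Gamma(\tau)^\perp$ is not automatic. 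Passing through $\omega$ removes this issue, because in the proof of \reflemma{18-05-20} one in fact has $w_h = F'_{\omega}(\pi_\omega(u_h)) = \pi_\omega(u_h)$ on $\Gamma(\tau)^\perp$, so the representation property is inherited directly from $g \mapsto u_g$. I expect that observation — that the crossed-product picture supplies the cocycle trivialization for free — to be the only step requiring thought; everything else is assembly of already proved statements.
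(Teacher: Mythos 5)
Your proof is correct and coincides with the paper's own argument: the authors likewise take an extremal $\omega_\tau \in \partial_e S_\beta(A \rtimes_\gamma G)$ above $\tau$ via the second item of Lemma \ref{13-05-20} and then apply Lemma \ref{18-05-20} to obtain $w^\tau$. Your additional remark about why a direct use of Theorem 8.9.4 in \cite{Pe} only yields a projective representation is a sensible observation but not needed for the proof.
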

\begin{proof} From the second item in Lemma \ref{13-05-20} we know that there is an $\omega_\tau \in \partial_e S_{\beta}(A \rtimes_\gamma G)$ such that ${\omega_\tau}|_A = \tau$. The desired representation arises then from an application of Lemma \ref{18-05-20}.
\end{proof}

\begin{lemma}\label{26-05-20b} Let $\tau \in \partial_e T_\beta(A)$ and let $\xi \in \widehat{\Gamma(\tau)^\perp}$ be a character on $\Gamma(\tau)^\perp$. Let $w^\tau$ be the representation from Lemma \ref{27-05-20d}. There is an extremal $\beta$-KMS state $\omega_\xi \in \partial_e S_\beta(A \rtimes_\gamma G)$ such that
 $$
 \omega_\xi(au_g)  = \begin{cases} 0 \ , & g \notin \Gamma(\tau)^{\perp} \\ \xi(g)\overline{\tau}\left(\pi_\tau(a)w^\tau_g\right), & \ g \in \Gamma(\tau)^{\perp} \  \end{cases}
 $$
for all $g \in G$ and all $a \in A$.
\end{lemma}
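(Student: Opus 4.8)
Write $H=\Gamma(\tau)^\perp\le G$. The plan is to obtain $\omega_\xi$ from the special case of the trivial character by composing with a dual automorphism of the crossed product. First I would fix $\omega_1\in\partial_e S_\beta(A\rtimes_\gamma G)$ to be the extremal state that was used in the proof of Lemma \ref{27-05-20d} to produce $w^\tau$ via Lemma \ref{18-05-20}. Then $\omega_1|_A=\tau$ and, by part iii) of Lemma \ref{18-05-20}, $\omega_1(au_g)=0$ for $g\notin H$ while $\omega_1(au_g)=\overline{\tau}(\pi_\tau(a)w^\tau_g)$ for $g\in H$, which is precisely the asserted formula when $\xi\equiv 1$.

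Next I would bring in the dual action. Since $G$ is discrete abelian and $H\le G$ is a subgroup, Pontryagin duality applied to the exact sequence $0\to\widehat{G/H}\to\widehat{G}\to\widehat{H}\to 0$ shows that the restriction map $\widehat{G}\to\widehat{H}=\widehat{\Gamma(\tau)^\perp}$ is surjective; so I fix $\chi\in\widehat{G}$ with $\chi|_H=\xi$. Let $\widehat{\gamma}$ denote the dual action of $\widehat{G}$ on $A\rtimes_\gamma G$, determined by $\widehat{\gamma}_\chi(au_g)=\chi(g)au_g$ (it is defined on the full and on the reduced crossed product alike). Evaluating on the generators $a\in A$ and $u_g$, and using that $D_g$ is central in $A$ while $\widehat{\gamma}_\chi$ fixes $A$ pointwise, one obtains $\widehat{\gamma}_\chi\circ\alpha^D_t=\alpha^D_t\circ\widehat{\gamma}_\chi$ for all $t\in\mathbb R$. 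Hence $\omega\mapsto\omega\circ\widehat{\gamma}_\chi$ is an affine weak$^*$ homeomorphism of the Choquet simplex $S_\beta(A\rtimes_\gamma G)$ onto itself, and in particular restricts to a bijection of $\partial_e S_\beta(A\rtimes_\gamma G)$ onto itself.

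Finally I would set $\omega_\xi=\omega_1\circ\widehat{\gamma}_\chi$. By the previous step $\omega_\xi\in\partial_e S_\beta(A\rtimes_\gamma G)$, and for $a\in A$, $g\in G$ one computes $\omega_\xi(au_g)=\chi(g)\,\omega_1(au_g)$. If $g\notin H$ this vanishes; if $g\in H$ then $\chi(g)=\xi(g)$ and $\omega_1(au_g)=\overline{\tau}(\pi_\tau(a)w^\tau_g)$, so $\omega_\xi(au_g)=\xi(g)\,\overline{\tau}(\pi_\tau(a)w^\tau_g)$, which is the desired formula (and $\omega_\xi|_A=\omega_1|_A=\tau$ since $\widehat{\gamma}_\chi$ fixes $A$).

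I do not expect a genuine obstacle here. The two points requiring care are keeping $w^\tau$ literally the representation attached to $\omega_1$ through Lemma \ref{18-05-20}, and the short, slightly fiddly verification that $\widehat{\gamma}_\chi$ commutes with $\alpha^D$. A more computational alternative would feed Theorem \ref{08-06-20b} the unital completely positive map $F=\rho\circ E_H$, where $E_H\colon A\rtimes_\gamma G\to A\rtimes_\gamma H$ is the conditional expectation obtained by averaging $\widehat{\gamma}$ over the compact group $\Gamma(\tau)=H^\perp\le\widehat{G}$ and $\rho\colon A\rtimes_\gamma H\to\pi_\tau(A)''$ is the $*$-homomorphism integrating the covariant pair $(\pi_\tau,\,h\mapsto\xi(h)w^\tau_h)$; one checks $F|_A=\pi_\tau$ and $F(u_gxu_g^*)=\overline{\gamma}_g(F(x))$ using $\overline{\gamma}_g(w^\tau_h)=w^\tau_h$, obtaining a $\beta$-KMS state with the right values. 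That route, however, still leaves extremality to be established separately, so the dual-automorphism argument is preferable.
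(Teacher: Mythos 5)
Your proposal is correct and coincides with the paper's own argument: the paper also extends $\xi$ to a character of $G$, composes the extremal state $\omega_\tau$ from the proof of Lemma \ref{27-05-20d} with the dual automorphism $\delta_\xi$, and uses that the dual action commutes with $\alpha^D$ to get the KMS property and extremality. Your added details (surjectivity of restriction $\widehat{G}\to\widehat{H}$ and the explicit check on generators) simply spell out the steps the paper calls straightforward.
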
 
 
\begin{proof}  We extend first $\xi$ to a character on $G$, which we also denote by $\xi$. Let $\delta: \widehat{G} \to \text{Aut} (A \rtimes_\gamma G )$ denote the dual action, i.e. $\delta_{\eta}(au_{g})=\eta(g) a u_{g}$ for $a\in A$ and $g\in G$. Let $\omega_\tau \in \partial_e S_\beta(A \rtimes_\gamma G)$ be the state from the proof of Lemma \ref{27-05-20d}. Since $\delta$ and $\alpha^{D}$ commute it is now straightforward to check that $\omega_\tau\circ \delta_{\xi}$ is a $\beta$-KMS state satisfying the formula in the lemma. Since $\omega_\tau$ is extremal and $\delta_{\xi}$ is an automorphism we also get $\omega_\tau\circ \delta_{\xi} \in \partial_e S_\beta(A \rtimes_\gamma G)$.

\end{proof}

\begin{thm}\label{26-05-20d} $X$ be the set of pairs $(\tau, \xi)$ where $\tau\in \partial_eT_\beta(A)$ and $\xi \in \widehat{\Gamma(\tau)^\perp}$. There is a bijection $\Phi : X \to \partial_e S_\beta(A \rtimes_\gamma G)$ such that 
$$
\Phi(\tau,\xi)(au_g) \ = \ \begin{cases} 0 \ , & g \notin \Gamma(\tau)^{\perp} \\ \xi(g)\overline{\tau}\left(\pi_\tau(a)w^\tau_g\right), & \ g \in \Gamma(\tau)^{\perp} \  \end{cases}
 $$
for all $g \in G$ and all $a \in A$, where $w^\tau$ is the representation from Lemma \ref{27-05-20d}.
\end{thm}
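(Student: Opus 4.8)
The plan is to let $\Phi(\tau,\xi)$ be the extremal $\beta$-KMS state $\omega_\xi$ furnished by Lemma \ref{26-05-20b}. First I would note that this is unambiguous: the elements $au_g$ with $a\in A$, $g\in G$ span a dense subspace of $A\rtimes_\gamma G$, so a state is determined by its values on them, and therefore the displayed formula pins $\omega_\xi$ down uniquely; together with the extremality assertion in Lemma \ref{26-05-20b} this shows that $\Phi\colon X\to\partial_e S_\beta(A\rtimes_\gamma G)$ is a well-defined map satisfying the stated formula. It then remains to prove injectivity and surjectivity.

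For injectivity, suppose $\Phi(\tau_1,\xi_1)=\Phi(\tau_2,\xi_2)=:\omega$. Evaluating the formula at $g=e$ (where $u_e=1$, $\xi_i(e)=1$ and $w^{\tau_i}_e=1$) gives $\tau_1=\omega|_A=\tau_2=:\tau$, so $w^{\tau_1}=w^{\tau_2}=w^\tau$. For $g\in\Gamma(\tau)^\perp$ the formula then yields $(\xi_1(g)-\xi_2(g))\,\overline{\tau}(\pi_\tau(a)w^\tau_g)=0$ for all $a\in A$. If $\xi_1(g)\neq\xi_2(g)$ this forces $\langle\pi_\tau(a^*)\Omega_\tau,w^\tau_g\Omega_\tau\rangle=\overline{\tau}(\pi_\tau(a)w^\tau_g)=0$ for every $a$, hence $w^\tau_g\Omega_\tau=0$ by cyclicity of $\Omega_\tau$ for $\pi_\tau(A)$, contradicting $\|w^\tau_g\Omega_\tau\|=\|\Omega_\tau\|=1$. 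So $\xi_1=\xi_2$ and $\Phi$ is injective.

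For surjectivity, I would take an arbitrary $\omega\in\partial_e S_\beta(A\rtimes_\gamma G)$, put $\tau:=\omega|_A$, which lies in $\partial_e T_\beta(A)$ by Lemma \ref{13-05-20}, and apply Lemma \ref{18-05-20} to the pair $(\tau,\omega)$ to obtain a representation $w$ of $\Gamma(\tau)^\perp$ by unitaries of $\pi_\tau(A)''$ with properties i)--iii). I then compare $w$ with the fixed representation $w^\tau$ from Lemma \ref{27-05-20d}: since both $w_h$ and $w^\tau_h$ implement $\overline{\gamma}_h$ on $\pi_\tau(A)''$, the unitary $w_h(w^\tau_h)^*$ commutes with $\pi_\tau(A)''$ and, being itself an element of $\pi_\tau(A)''$, lies in its centre; it is $\overline{\gamma}$-invariant because $\overline{\gamma}_g(w_h)=w_h$ and $\overline{\gamma}_g(w^\tau_h)=w^\tau_h$, so the ergodicity statement of Lemma \ref{18-05-20a} forces it to be a scalar $\xi(h)\in\mathbb T$. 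As $w$ and $w^\tau$ are group representations of $\Gamma(\tau)^\perp$, the assignment $h\mapsto\xi(h)$ is multiplicative, i.e. $\xi\in\widehat{\Gamma(\tau)^\perp}$, and $w_h=\xi(h)w^\tau_h$. Substituting this into item iii) of Lemma \ref{18-05-20} shows that $\omega$ and $\Phi(\tau,\xi)$ agree on all $au_g$, hence $\omega=\Phi(\tau,\xi)$.

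The step I expect to be the main obstacle is exactly this last comparison of $w$ with $w^\tau$. The representation $w$ produced by Lemma \ref{18-05-20} is tied to the particular extremal lift $\omega$, whereas $w^\tau$ was chosen once and for all; and since $\pi_\tau(A)''$ need not be a factor, the mere centrality of $w_h(w^\tau_h)^*$ does not make it scalar. What saves the argument is that this element is also $\overline{\gamma}$-invariant, so the ergodicity of $\overline{\gamma}$ on the centre (Lemma \ref{18-05-20a}) is precisely what upgrades ``central and $\overline{\gamma}$-invariant'' to ``scalar'' and lets the whole argument close.
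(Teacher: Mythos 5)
Your proof is correct and follows essentially the same route as the paper: well-definedness via Lemma \ref{26-05-20b}, surjectivity by feeding the extremal state through Lemmas \ref{13-05-20} and \ref{18-05-20} and comparing the resulting representation $w$ with $w^\tau$ so that $w_h(w^\tau_h)^*$ becomes a character $\xi$, and injectivity by separating $\xi$ and $\xi'$ with elements of $A$. The only (harmless) deviations are that you deduce scalarity of the central, $\overline{\gamma}$-fixed unitary $w_h(w^\tau_h)^*$ from the ergodicity of $\overline{\gamma}$ on the center of $\pi_\tau(A)''$ (Lemma \ref{18-05-20a}) where the paper instead places it in the center of the factor $\pi_{\omega_\tau}(A\rtimes_\gamma G)''$, and that your injectivity step uses cyclicity of $\Omega_\tau$ rather than approximating $(w^\tau_h)^*$ strongly by elements $\pi_\tau(a_n)$; both substitutions are valid.
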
 
\begin{proof} $\Phi$ is well-defined by Lemma \ref{26-05-20b}. To see that it is surjective let $\omega \in \partial_e S_{\beta}(A \rtimes_\gamma G)$ and let $\tau = \omega|_{A}$. We use first Lemma \ref{13-05-20} and Lemma \ref{18-05-20} to get a unitary representation $w$ of $\Gamma(\tau)^\perp$ with the properties stated there. Then $w^\tau_hw_h^*$ is in the center of $\pi_\tau(A)''$ and commutes in $\pi_{\omega_\tau}(A\rtimes_\gamma G)''$ with $\pi_{\omega_\tau}(u_g)$ for all $g \in G$ and $h\in \Gamma(\tau)^{\perp}$, where $\omega_\tau$ is the state giving us $w^\tau$, cf. Lemma \ref{27-05-20d}. It follows that $w^\tau_hw_h^*$ is in the center of $\pi_{\omega_\tau}(A\rtimes_\gamma G)''$ which is a factor since $\omega_\tau$ is extremal, cf. Theorem 5.3.30 in \cite{BR}. Hence $w_h = \xi(h)w^\tau_h$ for some $\xi(h) \in \mathbb T$. Note that $h \mapsto \xi(h)$ is character on $\Gamma(\tau)^\perp$ since $w^\tau$ and $w$ are both representations and observe that $\omega = \Phi(\tau,\xi)$. To see $\Phi$ is injective assume that $\Phi(\tau,\xi) = \Phi(\tau',\xi')$. Then $\tau = \Phi(\tau,\xi)|_A = \Phi(\tau',\xi')|_A = \tau'$ and $\overline{\tau}(\pi_{\tau}(a)\xi(h)w^\tau_h) = \overline{\tau}(\pi_{\tau}(a)\xi'(h)w^\tau_h)$ for all $a \in A$ and all $h \in \Gamma(\tau)^\perp$. Fix $h \in \Gamma(\tau)^\perp$ and choose a sequence $\{a_n\}_{n=1}^{\infty}$ in $A$ such that $\lim_{n\to \infty} \pi_\tau(a_n) = {w^\tau_h}^*$ strongly. Then
$$
\xi(h) = \lim_{n \to \infty} \xi(h)\overline{\tau}(\pi_{\tau}(a_n)w^\tau_h) =\lim_{n \to \infty} \xi'(h)\overline{\tau}(\pi_{\tau}(a_n)w^\tau_h) = \xi'(h) \ .
$$

\end{proof}

Theorem \ref{26-05-20d} gives a complete description only of the extremal $\beta$-KMS states for $\alpha^D$, but this is sufficient for most purposes since an arbitrary $\beta$-KMS state is the barycenter of a unique Borel probability measure concentrated on the extremal elements.

Using Bochners theorem we obtain from Theorem \ref{26-05-20d} the following corollary.

\begin{cor}\label{18-05-20i} Let $\tau \in \partial_e T_\beta(A)$. There is an affine homeomorphism between $\left\{ \omega \in  S_{\beta}(A \rtimes_{\gamma} G) : \ \omega|_A = \tau \right\}$ and the set of normalized positive definite functions $\varphi$ on $\Gamma(\tau)^\perp$ given by the formula
$$
\omega_\varphi(au_g) \ = \ \begin{cases} 0 \  , &  \ g\notin \Gamma(\tau) ^{\perp} \\  \varphi(g)\overline{\tau}(\pi_{\tau}(a)  w^\tau_g)  \ , & \ g \in \Gamma(\tau) ^{\perp} \ . \end{cases}
$$
\end{cor}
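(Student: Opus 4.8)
The plan is to deduce the corollary from Theorem~\ref{26-05-20d} by a barycentric decomposition, and then to convert the resulting space of probability measures on $\widehat{\Gamma(\tau)^\perp}$ into positive definite functions via Bochner's theorem.

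First I would fix $\tau\in\partial_eT_\beta(A)$ and set $K_\tau=\{\omega\in S_\beta(A\rtimes_\gamma G):\ \omega|_A=\tau\}$. By Corollary~\ref{05-05-20c} this set is nonempty, and, as observed in the proof of Lemma~\ref{13-05-20}, it is a closed face of the metrizable Choquet simplex $S_\beta(A\rtimes_\gamma G)$ because $\tau$ is extremal; hence $K_\tau$ is itself a metrizable Choquet simplex and $\partial_eK_\tau=\{\omega\in\partial_eS_\beta(A\rtimes_\gamma G):\ \omega|_A=\tau\}$. By Theorem~\ref{26-05-20d} the map $\xi\mapsto\Phi(\tau,\xi)$ is a bijection of $\widehat{\Gamma(\tau)^\perp}$ onto $\partial_eK_\tau$: it takes values in $K_\tau$ since $\Phi(\tau,\xi)|_A=\tau$ (the case $g=e$ of the defining formula), and it is onto $\partial_eK_\tau$ because $\Phi$ is a bijection onto $\partial_eS_\beta(A\rtimes_\gamma G)$. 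Moreover it is weak${}^*$-continuous, since $\xi\mapsto\Phi(\tau,\xi)(au_g)$ is continuous for each generator $au_g$, these span a dense subspace, and all $\Phi(\tau,\xi)$ are contractive. A continuous bijection of the compact space $\widehat{\Gamma(\tau)^\perp}$ onto the Hausdorff space $\partial_eK_\tau$ is a homeomorphism, so $\partial_eK_\tau$ is compact and $K_\tau$ is a Bauer simplex; consequently the barycenter map is an affine homeomorphism of the space of Borel probability measures on $\partial_eK_\tau$ onto $K_\tau$. Transporting along $\xi\mapsto\Phi(\tau,\xi)$, I obtain an affine homeomorphism $\Psi\colon\mathcal{P}(\widehat{\Gamma(\tau)^\perp})\to K_\tau$, where $\mathcal{P}(\widehat{\Gamma(\tau)^\perp})$ is the weak${}^*$-compact convex set of Borel probability measures on $\widehat{\Gamma(\tau)^\perp}$ and $\Psi(\nu)(x)=\int\Phi(\tau,\xi)(x)\,\mathrm{d}\nu(\xi)$.

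Next I would apply Bochner's theorem. Since $G$ is countable and discrete, so is $\Gamma(\tau)^\perp$, and $\widehat{\Gamma(\tau)^\perp}$ is compact metrizable; Bochner's theorem gives an affine bijection $\nu\mapsto\varphi_\nu$, $\varphi_\nu(g)=\int_{\widehat{\Gamma(\tau)^\perp}}\xi(g)\,\mathrm{d}\nu(\xi)$, of $\mathcal{P}(\widehat{\Gamma(\tau)^\perp})$ onto the convex set of normalized positive definite functions on $\Gamma(\tau)^\perp$ (normalized because $\varphi_\nu(e)=\nu(\widehat{\Gamma(\tau)^\perp})=1$). It is a homeomorphism for the topology of pointwise convergence on positive definite functions: in one direction each $\xi\mapsto\xi(g)$ is continuous on $\widehat{\Gamma(\tau)^\perp}$, and in the other the functions $\xi\mapsto\xi(g)$, $g\in\Gamma(\tau)^\perp$, span a unital, conjugation-closed, point-separating subalgebra of $C(\widehat{\Gamma(\tau)^\perp})$, hence a dense one by Stone--Weierstrass, while all the $\nu$'s have total mass $1$.

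Finally I would compose: $\omega\mapsto\varphi:=\varphi_{\Psi^{-1}(\omega)}$ is an affine homeomorphism of $K_\tau$ onto the normalized positive definite functions on $\Gamma(\tau)^\perp$. Evaluating $\omega=\Psi(\nu)$ on a generator $au_g$ and inserting the formula from Theorem~\ref{26-05-20d} yields $\omega(au_g)=0$ for $g\notin\Gamma(\tau)^\perp$ and $\omega(au_g)=\bigl(\int\xi(g)\,\mathrm{d}\nu(\xi)\bigr)\overline{\tau}(\pi_\tau(a)w^\tau_g)=\varphi(g)\,\overline{\tau}(\pi_\tau(a)w^\tau_g)$ for $g\in\Gamma(\tau)^\perp$; thus $\omega=\omega_\varphi$, so the inverse homeomorphism $\varphi\mapsto\omega_\varphi$ is exactly the map described in the statement. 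The step I expect to require the most care is the first one: identifying $\partial_eK_\tau$ with the compact group $\widehat{\Gamma(\tau)^\perp}$, hence $K_\tau$ as a Bauer simplex, which is what legitimizes the passage to $\mathcal{P}(\widehat{\Gamma(\tau)^\perp})$ and thence to Bochner's theorem; everything after that is routine. Alternatively one can bypass Choquet theory altogether: for any $\omega\in K_\tau$ the operators $\pi_\omega(u_g)(w^\tau_g)^{*}$, $g\in\Gamma(\tau)^\perp$, form a unitary representation of $\Gamma(\tau)^\perp$ on $H_\omega$ (using that $w^\tau_g$ is $\overline{\gamma}$-fixed and $G$ is abelian), whose diagonal matrix coefficient $g\mapsto\langle\Omega_\omega,\pi_\omega(u_g)(w^\tau_g)^{*}\Omega_\omega\rangle$ is the positive definite function $\varphi$ occurring via $\omega(au_g)=\overline{\tau}(\pi_\tau(a)F_\omega(u_g))$ (with $F_\omega$ as in \eqref{econd}), while $\omega(au_g)=0$ for $g\notin\Gamma(\tau)^\perp$ follows from Lemma~\ref{18-05-20a} as in the proof of Lemma~\ref{18-05-20}.
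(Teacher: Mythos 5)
Your argument is correct and follows the same route the paper intends: the paper's (unwritten) proof of this corollary is precisely the unique barycentric decomposition of the closed face $\left\{\omega : \omega|_A=\tau\right\}$ over its extreme boundary, identified with $\widehat{\Gamma(\tau)^\perp}$ via Theorem \ref{26-05-20d}, followed by Bochner's theorem, which is exactly what you carry out (your concluding sketch bypassing Choquet theory is a nice extra but only gives the map $\omega\mapsto\varphi$, not surjectivity). No gaps to report.
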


\begin{cor}\label{27-05-20f} The following are equivalent:
\begin{itemize}
\item Then map $\omega \mapsto \omega|_A$ is an affine homeomorphism of $S_{\beta}(A \rtimes_\gamma G)$ onto $T_\beta(A)$.
\item $\Gamma(\tau) = \widehat{G}$ for all $\tau \in \partial_e T_\beta(A)$.
\item $\pi_\tau(A)''\rtimes_{\overline{\gamma}} G$ is a factor for all $\tau \in \partial_e T_\beta(A)$.
\end{itemize}
\end{cor}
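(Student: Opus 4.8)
The plan is to prove the three-way equivalence in Corollary~\ref{27-05-20f} by showing a cycle of implications, using Theorem~\ref{26-05-20d} and Corollary~\ref{18-05-20i} as the main tools. The key observation is that by Theorem~\ref{26-05-20d} the restriction map $\partial_e S_\beta(A\rtimes_\gamma G) \to \partial_e T_\beta(A)$ has fibre over $\tau$ equal to the set $\{\Phi(\tau,\xi) : \xi \in \widehat{\Gamma(\tau)^\perp}\}$, and this fibre is a single point precisely when $\widehat{\Gamma(\tau)^\perp}$ is trivial, i.e. when $\Gamma(\tau)^\perp = \{e\}$, i.e. when $\Gamma(\tau) = \widehat{G}$ (using that $G$ is abelian and Pontryagin duality, so $\Gamma(\tau)^\perp = \{e\}$ iff $\Gamma(\tau)$ is dense, hence all of $\widehat{G}$ since the Connes spectrum is closed).

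I would argue as follows. First, (second item) $\Rightarrow$ (first item): if $\Gamma(\tau) = \widehat G$ for every $\tau \in \partial_e T_\beta(A)$, then by Corollary~\ref{18-05-20i} the set $\{\omega \in S_\beta(A\rtimes_\gamma G) : \omega|_A = \tau\}$ is in affine bijection with the normalized positive definite functions on the trivial group, which is a single point; hence the restriction map is a continuous affine bijection from the compact set $S_\beta(A\rtimes_\gamma G)$ to $T_\beta(A)$ (it is onto by Corollary~\ref{05-05-20c}), and a continuous bijection of compact Hausdorff spaces is a homeomorphism. Conversely, (first item) $\Rightarrow$ (second item): if the restriction map is injective, then in particular every fibre over an extremal $\tau$ is a single point, so $\widehat{\Gamma(\tau)^\perp}$ is trivial by Theorem~\ref{26-05-20d}, forcing $\Gamma(\tau)^\perp = \{e\}$ and hence $\Gamma(\tau) = \widehat G$. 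For the equivalence with the third item, I would invoke the standard fact (e.g. from \cite{Pe}) that for the action $\overline\gamma$ of the abelian group $G$ on the von Neumann algebra $\pi_\tau(A)''$, which is $\overline\gamma$-ergodic on its centre by Lemma~\ref{18-05-20a}, the crossed product $\pi_\tau(A)'' \rtimes_{\overline\gamma} G$ is a factor if and only if the Connes spectrum $\Gamma(\tau)$ equals $\widehat G$; this is precisely the Connes--Takesaki characterization of factoriality of crossed products by abelian groups, valid here because centre-ergodicity ensures the relevant "full" hypothesis. Quantifying over all $\tau \in \partial_e T_\beta(A)$ then gives (second item) $\Leftrightarrow$ (third item).

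The main obstacle I anticipate is making the equivalence with the third item fully rigorous: one must be careful that the classical theorem identifying factoriality of $\mathcal{M} \rtimes G$ with $\Gamma = \widehat G$ requires some hypothesis (such as the action being "properly outer modulo the centre" or the centre being acted on ergodically), and one needs Lemma~\ref{18-05-20a} exactly for this. Concretely I would point out that $\pi_\tau(A)''$ need not be a factor itself, so one cannot simply quote the factor case of the Connes spectrum theory; instead, centre-ergodicity of $\overline\gamma$ is what guarantees that the centre of $\pi_\tau(A)'' \rtimes_{\overline\gamma} G$ is governed entirely by $\Gamma(\tau)$. The implications among the first two items are routine compactness-plus-bijection arguments once Theorem~\ref{26-05-20d} and Corollary~\ref{18-05-20i} are in hand, so essentially all the substance is in correctly citing and applying the Connes--Takesaki result for the third equivalence.
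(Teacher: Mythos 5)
Your overall route is the same as the paper's: the equivalence of the first two items is read off from Theorem \ref{26-05-20d} (and Corollary \ref{18-05-20i}), and the equivalence of the last two is exactly the paper's combination of Lemma \ref{18-05-20a} (central ergodicity of $\overline{\gamma}$ on $\pi_\tau(A)''$) with the factoriality criterion for crossed products by abelian group actions, Theorem 8.11.15 in \cite{Pe}; your remark that one cannot quote the factor case and that centre-ergodicity is the needed hypothesis is precisely why that theorem is invoked together with Lemma \ref{18-05-20a}.

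There is, however, a concrete gap in your passage from the second item to the first. Corollary \ref{18-05-20i} (like Theorem \ref{26-05-20d}) only gives that the fibre of the restriction map over an \emph{extremal} $\tau \in \partial_e T_\beta(A)$ is a singleton; your ``hence the restriction map is a continuous affine bijection'' tacitly asserts that the fibres over non-extremal elements of $T_\beta(A)$ are singletons too, and this does not follow formally from what you have shown: an affine continuous surjection of compact convex sets can have singleton fibres over every extreme point of the target and still fail to be injective, and the shortcut ``bijective on extreme points implies injective'' would require knowing that $T_\beta(A)$ is a Choquet simplex, which the paper does not establish. The repair is short and is what the paper's appeal to Theorem \ref{26-05-20d} amounts to: under the second item the formula in Theorem \ref{26-05-20d} shows that every extremal $\beta$-KMS state vanishes on $au_g$ for $g \neq e$, i.e.\ equals $(\omega|_A)\circ P$ with $P$ the canonical conditional expectation; writing an arbitrary $\omega \in S_\beta(A\rtimes_\gamma G)$ as the barycentre of a probability measure concentrated on $\partial_e S_\beta(A\rtimes_\gamma G)$ (the remark following Theorem \ref{26-05-20d}) then gives $\omega(au_g)=0$ for all $g \neq e$, hence $\omega = (\omega|_A)\circ P$, and injectivity of $\omega \mapsto \omega|_A$ follows. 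With this step inserted, the rest of your argument (surjectivity from Corollary \ref{05-05-20c}, compactness giving the homeomorphism, and the converse direction forcing $\widehat{\Gamma(\tau)^\perp}$ to be trivial, so $\Gamma(\tau)=\widehat{G}$) is correct and matches the paper.
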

\begin{proof} The first two items are equivalent by Theorem \ref{26-05-20d} and the last two by a combination of Lemma \ref{18-05-20a} above with Theorem 8.11.15 in \cite{Pe}.
\end{proof}

\begin{remark}\label{27-05-20g} It's time for the second author to make amends. It has been demonstrated by examples in \cite{U} that the statement of Theorem 4.3 in \cite{Th} is not correct. To get a correct statement the fourth condition must be removed; it does imply the other conditions but not vice versa. Similarly, and for the same reason, the third condition in Theorem 5.4 in \cite{Th} must be removed. The mistake arises from the fact that Lemma 4.2 in \cite{Th}, which was uncritically gleaned from a paper by Bedos, \cite{Be}, is in fact not true. That the two theorems, 4.3 and 5.4 in \cite{Th}, are correct when the last condition is removed in both, follows from Corollary \ref{27-05-20f} by taking $\beta = 0$ or $D =0$. 
\end{remark}

\begin{example}\label{13-06-20} Let $\mathcal A$ be a finitely generated abelian group and $V_a, a \in \mathcal A$, a faithful representation of $\mathcal A$ in the unitary group $U_k$ of $M_k(\mathbb C)$. Let $n_0$ be the least natural number such that $2^{n_0} \geq k$. When $n \geq n_0$, set
$$
V^n_a = \diag (V_a, 1,1, \cdots , 1) \ ,
$$
giving us a faithful unitary representation of $\mathcal A$ in $U_{2^n}$.  We consider the CAR algebra $A$ as the infinite tensor product
$$
A = \otimes_{n=n_0}^{\infty} M_{2^{n}}(\mathbb C) \ .
$$ 
Set
$$
W^l_a = V^{n_0}_a \otimes V^{n_0+1}_a \otimes \cdots \otimes V^l_a \otimes 1 \otimes 1 \otimes 1 \otimes \cdots \ \in \ A \ .
$$
We can then define an action $\eta: \mathcal A \to \Aut A$ such that
$$
\eta_a(x) = \lim_{l \to \infty} W^l_ax{W^l_a}^* \ .
$$ 
It follows from Theorem 3.2 in \cite{L} that $\eta_a$ is an outer automorphism for $a \neq 0$. On the other hand if $\tau$ denotes the trace state of $A$ we find that
$$
\left\|W^n_a - W^{n+1}_a\right\|_\tau = \tau\left((W^n_a - W^{n+1}_a)^*(W^n_a - W^{n+1}_a)\right)^{\frac{1}{2}} \leq \frac{\|V_a -1\|}{\sqrt{2}^{n+1}} \ ,
$$
which implies that $\{\pi_\tau(W^n_a)\}$ converges in the strong operator topology to a unitary in $\pi_\tau(A)''$. This shows that $\overline{\eta}$ is an inner action on $\pi_\tau(A)''$. Let $R_\theta$ be an irrational rotation of the circle; i.e. $\theta\in \mathbb R\backslash \mathbb Q$ and $R_\theta(z) = e^{2\pi i \theta}z$. We define an action $\gamma : \mathcal A \times  \mathbb Z \to \Aut C(\mathbb T)\otimes A$ such that
$$
\gamma_{(a,z)} (f)(y) =  \eta_a \left( f\left( R^{-z}_\theta(y)\right) \right) \ 
$$
when $f : \mathbb T \to A$ is continuous. Set
$$
B = \left( C(\mathbb T)\otimes A\right)\rtimes (\mathcal A \times \mathbb Z) \ .
$$
Then no non-trivial ideal is fixed by $\gamma$ and $\gamma_{(z,a)}$ is outer for all $(z,a) \neq (0,0)$. It follows therefore from \cite{Ki} that $B$ is simple.
Let $F : \mathbb T \to \mathbb R$ be a continuous function, considered as a self-adjoint element of $C(\mathbb T) \subseteq C(\mathbb T)\otimes A$; the center of $C(\mathbb T)\otimes A$.  Set 
\begin{equation}\label{12-06-20a}
S_k(F) = \begin{cases} -\sum_{j=1}^{|k|} F\circ R_{\theta}^{-j} \ , & k \leq -1 \ , \\ 0 \ , \ & k = 0  \ ,\\ \sum_{j=0}^{k-1} F \circ R_{\theta}^j \ , \ & k \geq 1 \ . \end{cases}
\end{equation}
We can then define $D :  \mathcal A \times \mathbb Z \to C(\mathbb T)$ such that
$$
D_{(a,z)} = S_{z}(F) \ .
$$
Then $D$ satisfies the cocycle relation \eqref{05-05-20} and we obtain therefore a flow $\alpha^D$ on $B$. To determine the $\beta$-KMS states for $\alpha^D$ we observe first that the set of trace states $T(C(\mathbb T) \otimes A)$ can be identified with the simplex of Borel probability measures on $\mathbb T$; any such measure $m$ defines a trace state $\tau_m$ on $C(\mathbb T) \otimes A$ such that
$$
\tau_m (f) \ = \ \int_{\mathbb T} \tau(f(t)) \ \mathrm{d}m(t) 
$$
when $f : \mathbb T  \to A$ is continuous and $\tau$ is the trace on $A$. By Lemma \ref{05-05-20b} $\tau_m$ is the restriction to $C(\mathbb T) \otimes A$ of a $\beta$-KMS state for $\alpha^D$ iff 
$$
\int_{\mathbb T} \tau(\gamma_{(a,z)}(f)(t)) \ \mathrm{d}m(t) \ = \ \int_{\mathbb T} \tau (f(t)) e^{\beta S_z(F)(t)} \ \mathrm{d} m(t)
$$
for all $f \in C(\mathbb T) \otimes A$ and all $z \in \mathbb Z$ and $a\in \mathcal{A}$. By definition of $\gamma$ and $D$ this holds if and only if
$$
\int_{\mathbb T} g \circ R_{\theta}^{-k}(t) \ \mathrm{d}m(t) \ = \ \int_{\mathbb T} g(t) e^{\beta S_k(t)} \ \mathrm{d} m(t)
$$
for all $g \in C(\mathbb T)$ and all $k \in \mathbb Z$. This, in turn, holds if and only if $m$ is $e^{\beta F}$-conformal for $R_\theta$ in the terminology of \cite{CT}. We can therefore use Theorem 6.9 in \cite{CT} to conclude that $T_\beta( C(\mathbb T)\otimes A)$ is non-empty for all $\beta \in \mathbb R $ if $\int_\mathbb T F(t) \ \mathrm{d} t = 0$, where $\mathrm{d}t$ is the Lebesgue measure on $\mathbb T$, while $T_{\beta}( C(\mathbb T)\otimes A) = \emptyset$ for $\beta \neq 0$ if $\int_\mathbb T F(t) \ \mathrm{d} t \neq 0$. In particular, there are no $\beta$-KMS states for $\alpha^D$ when $\beta \neq 0$ if $\int_\mathbb T F(t) \ \mathrm{d} t \neq 0$. We assume therefore in the following that $\int_\mathbb T F(t) \ \mathrm{d} t = 0$. It follows from Theorem 7.1 in \cite{CT} that the set of $e^{\beta F}$-conformal measures can contain arbitrarily many $R_\theta$-ergodic measures in this case. By Theorem \ref{26-05-20d} each such measure $m$ gives rise to a closed face in the simplex $S_\beta (B)$ of $\beta$-KMS states for $\alpha^D$, whose set of extreme points is in bijective correspondence with the set 
$$
\widehat{\Gamma(\tau_m)^\perp} \ .
$$  
Note that $\pi_{\tau_m}( C(\mathbb T)\otimes A)''$ is isomorphic to the von Neumann algebra tensor product $L^{\infty}(\mathbb T,m)\otimes \pi_\tau(A)''$ and that the action $\overline{\gamma}$ respects this tensor product decomposition. Since $\overline{\eta}$ is inner on $\pi_\tau(A)''$ it follows that $\overline{\gamma}_{(a,z)}$ is inner iff $z= 0$, in which case $\overline{\gamma}_{(a,z)} = \Ad (1 \otimes U)$ for some unitary $U \in \pi_\tau(A)''$ which is fixed by $\overline{\eta}$. It follows therefore from Theorem 8.9.4 in \cite{Pe} that ${\Gamma(\tau_m)^\perp} \simeq \mathcal A$. We conclude then from Theorem \ref{26-05-20d} that $\partial_e S_\beta (B)$ is in bijection with the set 
$$
\partial_e T_\beta( C(\mathbb T)\otimes A ) \times \widehat{\mathcal A} \ ,
$$ 
where $\partial_e T_\beta( C(\mathbb T)\otimes A )$ can be identified in a natural way with the set of $R_\theta$-ergodic $e^{\beta F}$-conformal measures. By taking $\beta = 0$ in Theorem \ref{26-05-20d} we find that the extremal boundary of the simplex of trace states of $B$ is homeomorphic to $\widehat{\mathcal A}$ and hence that the traces states of $B$ are in bijective correspondence with the Borel probability measures on $\widehat{\mathcal A}$.

\end{example}


\begin{thebibliography}{WWWW} 


\bibitem[Be]{Be} E. Bedos, {\em Simple C*-crossed products with a unique trace}, Erg. Theory and Dyn. Syst. {\bf 16} (1996), 415--429.




\bibitem[BR]{BR} O. Bratteli and D.W. Robinson, {\em Operator Algebras
    and Quantum Statistical Mechanics I + II}, Texts and Monographs in
  Physics, Springer Verlag, New York, Heidelberg, Berlin, 1979 and 1981.








\bibitem[CT]{CT} J. Christensen and K. Thomsen, {\em KMS states on the crossed product $C^*$-algebra of a homeomorphism}, arXiv:1912.06069



\bibitem[Ki]{Ki} A. Kishimoto, {\em Outer Automorphisms and Reduced Crossed Products
of Simple $C^*$-Algebras}, Comm. Math. Phys. {\bf 81} (1981), 429--435 



\bibitem[L]{L} C. Lance, {\em Inner automorphisms of UHF algebras}, J. Londom Math. Soc. {\bf 43} (1968), 681--688.


\bibitem[N]{N} S. Neshveyev, {\em KMS states on the $C^{*}$-algebras of non-principal groupoids}, Journal of Operator Theory.  {\bf 70} (2011).


\bibitem[NS]{NS} S. Neshveyev and E. St\o rmer, {\em Dynamical entropy in operator algebras}, Ergebnisse der Mathematik und ihrer Grenzgebiete, Springer-Verlag, Berlin, 2006.

\bibitem[Pe]{Pe} G. K. Pedersen, {\em $C^*$-algebras and their automorphism groups}, Academic Press, London, New York, San Francisco, 1979.



\bibitem[Re]{Re} J. Renault, {\em A Groupoid Approach to $C^*$-algebras},  LNM 793, Springer Verlag, Berlin, Heidelberg, New York, 1980.  


\bibitem[Ta]{Ta} M. Takesaki, {\em
Conditional expectations in von Neumann algebras}, J. Functional Analysis {\bf 9} (1972), 306--321.










\bibitem[Th]{Th} K. Thomsen, {\em Traces, Unitary Characters and Crossed Products by $\mathbb Z$}, Publ. R.I.M.S {\bf 31} (1995), 1011--1029.

\bibitem[U]{U} D. Ursu, {\em Characterizing traces on crossed products of noncommutative $C^*$-algebras}, arXiv:2005.02581.


\end{thebibliography}
\end{document}